\numberwithin{equation}{section}
\numberwithin{figure}{section}
\theoremstyle{plain}
\newtheorem{thm}{\protect\theoremname}[section]
\theoremstyle{remark}
\newtheorem{rem}[thm]{\protect\remarkname}
\theoremstyle{definition}
\newtheorem{defn}[thm]{\protect\definitionname}
\theoremstyle{plain}
\newtheorem{lem}[thm]{\protect\lemmaname}
\theoremstyle{plain}
\newtheorem{prop}[thm]{\protect\propositionname}
\newtheorem{cor}[thm]{\protect\corollaryname}
\theoremstyle{remark}
\theoremstyle{definition}
\newtheorem{example}[thm]{\protect\examplename}
\theoremstyle{plain}
\providecommand{\definitionname}{Definition}
\providecommand{\corollaryname}{Corollary}
\providecommand{\examplename}{Example}
\providecommand{\lemmaname}{Lemma}
\providecommand{\notationname}{Notation}
\providecommand{\propositionname}{Proposition}
\providecommand{\remarkname}{Remark}
\providecommand{\theoremname}{Theorem}
\newcommand*\bigcdot{\mathpalette\bigcdot@{.5}} \newcommand*\bigcdot@[2]{\mathbin{\vcenter{\hbox{\scalebox{#2}{$\m@th#1\bullet$}}}}} \makeatother
\definecolor{liendo}{rgb}{0.0, 0.5, 0.0}
\begin{document}

\title[Topologically integrable derivations and additive group actions on affine ind-schemes]{Topologically integrable derivations and \\ additive group actions on affine ind-schemes}

\author{Roberto D\'iaz}
\address{Departamento de Matem\'aticas, Facultad de Ciencias, Universidad de La Serena, Juan
Cisternas 1200, La Serena, Chile.}%
\email{roberto.diazv1@userena.cl}

\author{Adrien Dubouloz}   \address{Laboratoire de Math\'ematique et Applications, UMR 7348 CNRS, Universit\'e de Poitiers, 86000 Poitiers \\ ~\indent Universit\'e Bourgogne Europe, CNRS, IMB UMR 5584, 21000 Dijon, France}
\email{adrien.dubouloz@math.cnrs.fr}

\author{Alvaro Liendo} %
\address{Instituto de Matem\'atica y F\'\i sica, Universidad de Talca,
  Casilla 721, Talca, Chile.}%
\email{aliendo@inst-mat.utalca.cl}

\thanks{{\it 2020 Mathematics Subject
 Classification}: 13J10, 13N15, 14R20, 14L30.\\
 \mbox{\hspace{11pt}}{\it Key words}: affine ind-schemes, additive group actions, complete topological rings, restricted power series, restricted exponential homomorphisms, topologically integrable derivations.\\
 \mbox{\hspace{11pt}} The first author was supported by ANID via Proyecto Fondecyt Postdoctorado N\textsuperscript{o}3230406. The second author was partially supported by ANID via Proyecto Fondecyt Regular N\textsuperscript{o} 1240101 and Fondecyt Exploraci\'on 13250049.}

\begin{abstract} 
We develop a theory of additive group actions on affine ind-schemes through a purely algebraic and topological framework. Affine ind-schemes are described via complete, second-countable, linearly topologized rings, and actions of the additive group are encoded by restricted exponential homomorphisms. We introduce the notion of a topologically integrable derivation, a continuous derivation whose formal exponential converges in the sense of restricted power series, and show that this notion provides the correct extension of locally nilpotent derivations to the infinite-dimensional setting. Our first main result establishes a one-to-one correspondence between topologically integrable derivations and additive group actions on affine ind-schemes, extending the classical correspondence for affine varieties. We then investigate the structure of such actions admitting a slice. In this context, we prove an ind-scheme analog of the classical slice theorem: if an additive group action admits a slice, then the underlying affine ind-scheme is equivariantly isomorphic to a product with the affine line, and the action is given by translation on the second factor. Several examples illustrate the necessity of the topological hypotheses and highlight phenomena absent in the finite-type case.
\end{abstract}

\maketitle

\section*{Introduction}

For the sake of clarity, the discussion in this introduction is confined to schemes over a field $k$ of characteristic zero, although all results in the paper are established in a more general setting.

Actions of the additive group $\mathbb{G}_{a,k}$ on affine algebraic varieties have a deep classical connection with derivations of their coordinate rings. In particular, a regular $\mathbb{G}_{a,k}$-action on an affine $k$-variety $\operatorname{Spec}(R)$ is equivalent to the existence of a nonzero locally nilpotent $k$-derivation on $R$, reflecting the fact that a one-parameter unipotent group is generated infinitesimally by a nilpotent vector field. This correspondence, which originates from the work of Rentschler, Miyanishi, and others, has become a cornerstone of affine algebraic geometry (see, e.g., Freudenburg's exposition in \cite{F06}). It not only provides an algebraic way to handle  $\mathbb{G}_{a,k}$-actions via locally nilpotent derivations (LNDs), but also leads to structure theorems: for instance, if a $\mathbb{G}_{a,k}$-action on a variety admits a slice (an invariant affine-linear coordinate), then the variety is isomorphic to a trivial $\mathbb{G}_{a,k}$-bundle over its quotient. In coordinates, the presence of a slice $x$ means that $R \simeq R^{\mathbb{G}_{a,k}}[s]$ as rings (with $\mathbb{G}_{a,k}$ acting by translation in the $s$-direction).

In this paper, we extend this classical theory to the setting of affine ind-schemes. An affine ind-scheme in our sense is a filtered direct limit of affine schemes along closed embeddings, and we restrict to the case where a countable ascending chain of affine schemes realizes the limit. Equivalently, the coordinate ring $B$ of such an ind-scheme can be written as an inverse limit of a countable sequence of finitely generated algebras, endowed with the inverse limit topology making $B$ into a complete, Hausdorff, linearly topologized ring. We model a $\mathbb{G}_{a,k}$-action on $\operatorname{Spec}(B)$ by a restricted exponential homomorphism $\mathrm{e}\colon B \to B\{T\}$, where $B\{T\}\subset B[[T]]$ is the ring of restricted power series in an indeterminate $T$ with coefficients in $B$ (informally, formal power series that converge under the topology of $B$, see Definition~\ref{def:Restricted-Exp-Map} for details). This algebra homomorphism $\mathrm{e}$ plays the role of the coaction map for $\mathbb{G}_{a,k}$. Indeed, it satisfies the usual group law identities by means of commutative diagrams (cf. Definition~\ref{def:Restricted-Exp-Map}) and can be viewed as the exponential of a derivation. 

Our first main result establishes that the classical correspondence between $\mathbb{G}_{a,k}$-actions and locally nilpotent derivations persists in this infinite-dimensional context, once derivations are interpreted in a suitable topological sense. Indeed, we introduce the notion of a topologically integrable derivation on a complete topological algebra $B$: this is a continuous derivation $\partial: B \to B$ whose formal exponential 
$$\exp(T\partial)\colon B\to B[[T]] \quad\mbox{given by}\quad f \mapsto \sum_{i\ge0} \frac{1}{i!}T^i\partial^i(f)$$ 
converges in $B\{T\}$ to a restricted exponential homomorphism (see Definition~\ref{def:Pro-LND} for the precise details). Theorem~\ref{top-int-1-1-exp} provides a one-to-one correspondence between topologically integrable derivations on $B$ and restricted exponential homomorphisms $\mathrm{e}: B \to B\{T\}$. This extends the classical correspondence between algebraic $\mathbb{G}_{a,k}$-actions and LNDs to affine ind-schemes. 

We stress that the additional topological hypotheses in Definition~\ref{def:Pro-LND} (notably the uniform equicontinuity of ${D(i)}$ and its pointwise convergence to zero) are essential in the infinite-dimensional setting: unlike the finite-type case, a derivation can be pointwise nilpotent on $B$ without being globally integrable unless a uniform convergence criterion is met. For example, we exhibit a continuous derivation on a certain countably-generated power series algebra that kills each element of $B$ to order $n$ for some $n$ (depending on the element) while not admitting a continuous restricted exponential series within $B$, see Example~\ref{ex:non-contconv-der}. Such pathologies do not occur in the classical (discrete) theory, and our framework of topologically integrable derivations is designed to exclude them.

Having established the fundamental correspondence, we then investigate $\mathbb{G}_{a,k}$-actions on affine ind-schemes from a structural perspective. A central concept is that of a slice for the action, generalizing the classical notion mentioned above. In our topological context, a local slice is an element $s\in B$ whose image $\mathrm{e}(s)\in B\{T\}$ is a non-constant polynomial of minimal possible degree, and a slice is a local slice $s$ such that $\mathrm{e}(s)$ is furthermore a monic polynomial. Intuitively, $s$ provides a ``transverse'' coordinate to the $\mathbb{G}_{a,k}$-orbits, being as simple as possible under the given action. Our second main result, Theorem \ref{Theorem: Slice}, is a structure theorem for $\mathbb{G}_{a,k}$-actions admitting a slice. It states that if $\mathrm{e}: B \to B\{T\}$ is a restricted exponential homomorphism (i.e., an additive group action) and $s\in B$ is a slice, then the slice map $\theta: B^\mathrm{e}[X] \to B$ sending $X$ to $s$ is an isomorphism of topological $ B^\mathrm{e}$-algebras. Here $ B^\mathrm{e} = \{b\in B: \mathrm{e}(b) = b\}$ is the ring of invariants of the action. In particular, $\operatorname{Spec}(B)$ is isomorphic to $\operatorname{Spec}( B^\mathrm{e}) \times \mathbb{A}^1$ (as ind-schemes) in such a way that the given $\mathbb{G}_{a,k}$-action corresponds to the natural translation action on the $\mathbb{A}^1$-factor. This result can be viewed as an ind-scheme analog of the classical slice theorem for additive group actions, recovering the familiar fact $R \cong R^{\mathbb{G}_{a,k}}[X]$ in the discrete case as a special scenario. 

\medskip 

The paper is organized as follows. In Section \ref{sec:1}, we recall the description of restricted strict affine ind-schemes in terms of complete linearly topologized rings, which provides the algebraic framework used throughout the article. Section \ref{sec:2} introduces restricted exponential homomorphisms and topologically integrable iterated higher derivations, and establishes the correspondence between these notions, leading to Theorem \ref{top-int-1-1-exp}. In Section \ref{sec:3}, we develop basic structural properties of restricted exponential homomorphisms, including invariants, localization, passage to quotients, and compatibility with limits and automorphisms. Section \ref{sec:4} is devoted to restricted exponential homomorphisms admitting local slices and contains the proof of the main structure theorem, Theorem \ref{Theorem: Slice}. Section \ref{sec:5} discusses further examples and applications illustrating the theory. The appendices collect background material and technical results on topological rings, restricted power series, and related completeness properties used throughout the paper.




\section{Preliminaries}\label{sec:1}

\subsection{Restricted strict affine ind-schemes as complete second-countable linearly topologized rings}\label{sub:abstract-nonsense}

The purpose of this section is to provide the reader with a summary that contextualizes our article within the existing panorama of different notions of ind-varieties and ind-schemes that have already been considered within the scope of affine geometry, and to motivate the choice to formulate the results of the article in the formalism of topological rings. We do not claim any originality in the following presentation; all the elements presented are taken from \cite[Expos\'e 1]{SGA4-I},  \cite[Appendix]{AB-69}, \cite[Section A]{G-60} and  \cite[Section 6]{KS06} for ind-objects and pro-objects in a category, and from \cite[Chapter 0, $\S$ 7]{GD} for the principle of the correspondence between certain classes of pro-rings and separated and complete topological rings. We refer the reader to these references for all the detailed definitions and precise  statements. 

\medskip 

There is a standard notion of an ind-object in any category $\mathcal{C}$: the category $\mathrm{Ind}(\mathcal{C})$ of ind-objects of $\mathcal{C}$ is defined as the full subcategory of the category $\mathrm{Fun}(\mathcal{C}^{\mathrm{opp}},(\mathrm{Sets}))$ of set-valued contravariant functors on $\mathcal{C}$ whose objects $F$ are small filtered colimits of  representable functors, i.e $F\cong \mathrm{colim}_{d\in D}(\upsilon (\alpha(d)))$ where $D$ is a small filtered category, $\alpha:D\to \mathcal{C}$ is a functor and $\upsilon:\mathcal{C}\to \mathrm{Fun}(\mathcal{C}^{\mathrm{opp}},(\mathrm{Sets}))$, $X\mapsto \mathrm{Hom}(-,X)$ is the Yoneda embedding. Taking for $\mathcal{C}$ the category $(\mathrm{Aff})$ of affine schemes  gives in particular a notion of affine ind-scheme. But the notion we consider in this article is more restrictive: these are those
affine ind-schemes $F$ for which the index category $D$ can be taken equal to that of representable closed sub-functors\footnote{That is, a representable sub-functor $\upsilon(X)\to F$ such that for every morphism of functors $\upsilon(S)\to F$, where $S$ is an affine scheme, the functor $\upsilon(S)\times_F \upsilon(X)$ is representable, say isomorphic to $\upsilon(T)$ for some affine scheme $T$, and the morphism $\upsilon(S)\times_F \upsilon(X)\to \upsilon(X)$ is equal to $\upsilon(j)$ for a closed immersion $j:T\to X$.} of $F$ and satisfy the additional condition to admit a directed countable cofinal full subcategory $I$. Since every such subcategory $I$ contains a cofinal directed full subcategory isomorphic to a chain $J=\{0 < 1 < \cdots < n < \cdots \}$, finite or not, of consecutive integers, the combination of the two conditions is equivalent to requiring that $F\cong \varinjlim_{j\in J} \upsilon(X_j)$ for a chain of closed immersions $(X_0\hookrightarrow X_1\hookrightarrow \cdots \hookrightarrow X_j \hookrightarrow \cdots )_{j\in J}$ between affine schemes. To avoid confusion with other concepts of ind-varieties and ind-schemes in the context of affine algebraic geometry mentioned in the introduction that are circulating in the literature, it might be convenient to call this full subcategory of $\mathrm{Ind(Aff)}$ the category of \emph{restricted strict affine ind-schemes}.

The anti-equivalence between the category $(\mathrm{Aff})$ and the
category $(\mathrm{Rings})$ of commutative unital rings extends to an anti-equivalence between $\mathrm{Ind(Aff)}$ and the category $\mathrm{Pro(Rings)}$ of pro-objects of the category 
$(\mathrm{Rings})$, defined in a dual way as the full sub-category of the category $\mathrm{Fun}((\mathrm{Rings}),(\mathrm{Sets}))^{\mathrm{opp}}$ 
opposite to that of covariant set-valued functors on $(\mathrm{Rings})$ whose objects  $G$ are small co-filtered limits of representable functors, i.e. $G\cong\lim_{d'\in D'}\upsilon'(\alpha'(d'))$, where $D'$ is a small co-filtered category, $\alpha':D'\to (\mathrm{Rings})$ is a functor and 
$\upsilon':(\mathrm{Rings})\to \mathrm{Fun}((\mathrm{Rings}),(\mathrm{Sets}))^{\mathrm{opp}}$, $R\mapsto \mathrm{Hom}(R,-)$ is the "opposite" Yoneda embedding.  Restricted strict affine ind-schemes correspond under this anti-equivalence to pro-rings $G$ for which the index category $D'$ can be taken equal to that of representable quotients of $G$ and satisfy the additional condition of existence of a chain $(\cdots \to R_j\to \cdots \to R_1\to R_0)_{j\in J}$ of surjective homomorphisms of rings such that $G\cong \varprojlim_{j\in J} \upsilon'(R_j)$. We call the corresponding full subcategory of $\mathrm{Pro(Rings)}$ the category of \emph{restricted strict pro-rings}.

The category $(\mathrm{Rings})$ being complete, the limit $\varprojlim_{d'\in D'} \alpha'(d')$
of any diagram $\alpha':D'\to (\mathrm{Rings})$ of surjective homomorphisms of rings
 indexed by a small co-filtered category $D'$  exists in $(\mathrm{Rings})$. But it is important to notice here that since
the Yoneda embedding $\upsilon'$ does not in general preserve limits, the limits $\varprojlim_{d'\in D'} \alpha'(d')$ in  $(\mathrm{Rings})$ and $\varprojlim_{d'\in D'} \upsilon'(\alpha'(d'))$ in $\mathrm{Pro(Rings)}$ are in general different, i.e., the canonical morphism of functors $$\upsilon'(\varprojlim_{d'\in D'} \alpha'(d'))=\mathrm{Hom}(\varprojlim_{d'\in D'} \alpha'(d'),-)\to  \varprojlim_{d'\in D'} \upsilon'(\alpha'(d'))=\varprojlim_{d'\in D'} \mathrm{Hom}(\alpha'(d'),-)$$ is in general not an isomorphism.

There is nevertheless a standard construction which allows to fully retain the richer information of the pro-ring $G=\varprojlim_{d'\in D'} \upsilon'(\alpha'(d'))$ in terms of a suitable additional structure of topological ring on the ring $\varprojlim_{d'\in D'} \alpha'(d')$.   Namely, considering each of the rings $\alpha'(d')$ as endowed with the discrete topology, one endows $\mathcal{R}_G:=\varprojlim_{d'\in D'} \alpha'(d')$ with the limit topology, that is, the coarsest topology making all the canonical surjections $p_{d'}:\mathcal{R}_G\to \alpha'(d')$, $d'\in D'$, continuous. The so-defined topology $\tau_G$ makes $\mathcal{R}_G$ into a separated linearly topologized commutative unital topological ring with a fundamental system $\mathrm{B}$ of open neighborhoods of $0_{\mathcal{R}_G}$ consisting of the ideal $\mathfrak{r}_{d'}=p_{d'}^{-1}(0_{\alpha'(d')})$, $d'\in D'$. Moreover, the underlying abelian topological group of $\mathcal{R}_G$, viewed as a uniform space with a base of uniformities indexed by the above open neighborhoods of $0_{\mathcal{R}_G}$, is complete. 
The condition of existence of a chain $(\cdots \to R_j\to \cdots\to R_1\to R_0)_{j\in J}$ of surjective homomorphisms of rings such that $G\cong \varprojlim_{j\in J} \upsilon'(R_j)$ is equivalent to requiring that $(\mathcal{R}_G,\tau_G)$ further admits a countable basis of open neighborhoods of $0_{\mathcal{R}_G}$, i.e. that $(\mathcal{R}_G,\tau_G)$ is a second-countable topological space . Conversely, every separated complete linearly topologized topological ring $(\mathcal{R},\tau)$ determines a pro-ring $$G_{(\mathcal{R},\tau)}:(\mathrm{Rings})\to (\mathrm{Sets}), A\mapsto \mathrm {CHom}(\mathcal{R},A)\cong\varprojlim_{\mathfrak{r}\in \mathrm{B}} \mathrm{Hom}(\mathcal{R}/\mathfrak{r},A),$$   
where $\mathrm{CHom}(\mathcal{R},A)$ is the set of continuous rings homomorphisms between $(\mathcal{R},\tau)$ and $A$ endowed with the discrete topology and where $\mathrm{B}$ is any basis of the topology $\tau$ consisting of ideals $\mathfrak{r}$ of $\mathcal{R}$, which is a restricted strict pro-ring if and only if $(\mathcal{R},\tau)$ is second-countable.  

The two associations $G\mapsto (\mathcal{R}_G,\tau_G)$ and $(\mathcal{R},\tau)\mapsto G_{(\mathcal{R},\tau)}$ are mutual inverses, and it is routine to verify that they give rise to an equivalence of categories between the category of restricted strict pro-rings and the category $(\mathrm{CRTop})$ whose objects 
are second-countable separated and
complete linearly topologized topological rings and whose morphisms are continuous homomorphisms of rings.

\medskip
In short, the theory of restricted strict affine ind-schemes is (anti-)equivalent to that of second-countable separated and complete linearly
topologized topological rings. In this setting, the theory of \emph{pro-affine algebras} over a base field $k$ pioneered by Kambayashi  \cite{Kam96,Kam03,Kam04} corresponds to that of restricted strict affine ind-schemes over $k$. 

\medskip
Throughout the rest of this article, we adopt the "algebraic-topological" viewpoint, in which we develop and formulate our results, leaving it largely up to the interested reader to retranslate the main statements into more "geometric" terms for the corresponding restricted strict affine ind-schemes.  
\subsection{Basic definitions, notations and examples}
We briefly summarize some essential definitions and properties of topological rings that will be used later in this article. We refer the reader to the Appendix for more detailed definitions and information. 

Throughout the rest of the article, the term \emph{topological ring} will always refer to a commutative topological ring $\mathcal{A}$  with unity, endowed with a linear topology with respect to the underlying topological abelian group structure for which there exists a fundamental system of open neighborhoods of $0_{\mathcal{A}}$
consisting of a countable family $(\mathfrak{a}_{n})_{n\in\mathbb{N}}$
of ideals of $\mathcal{A}$. In particular, every topological ring in this sense is a second-countable uniform topological space.  The term \emph{homomorphism of topological rings} is used to refer to a homomorphism of rings $f:\mathcal{A}\to \mathcal{B}$ which is continuous with respect to the topologies on $\mathcal{A}$ and $\mathcal{B}$. We denote by $\mathrm{CHom}(\mathcal{A},\mathcal{B})\subset \mathrm{Hom}(\mathcal{A},\mathcal{B})$ the subset of homomorphisms of topological rings.

We call a topological ring $\mathcal{A}$ \emph{complete} if it is separated as a topological space and complete 
with respect to its uniform structure. This holds if and only if given any fundamental system of open neighborhoods $(\mathfrak{a}_{n})_{n\in\mathbb{N}}$ of $0_{\mathcal{A}}$, the canonical homomorphism of rings $c:\mathcal{A}\to \varprojlim_{n\in \mathbb{N}} \mathcal{A}/\mathfrak{a}_n$, called the \emph{separated completion homomorphism} of $\mathcal{A}$, is an isomorphism. It follows in turn that if $\mathcal{A}$ and $\mathcal{B}$ are complete topological rings and $(\mathfrak{a} _{n})_{n\in\mathbb{N}}$ 
and $(\mathfrak{b}_{n})_{n\in\mathbb{N}}$
are any choice of fundamental systems of open neighborhoods of $0_{\mathcal{A}}$ and $0_{\mathcal{B}}$ respectively,
we have $$\mathrm{CHom}(\mathcal{A},\mathcal{B})=\varprojlim_{m\in \mathbb{N}}( \varinjlim_{n \in \mathbb{N}} \mathrm{Hom} (\mathcal{A}/\mathfrak{a}_n, \mathcal{B}/\mathfrak{b_m})).$$
Keeping the notation introduced in the previous subsection, we let $(\mathrm{CRTop})$ be the category whose objects are complete topological rings and whose morphisms are homomorphisms of topological rings.

\medskip

We now introduce a class of complete topological rings which will be used repeatedly.

\begin{example}[{Pro-polynomial rings}]\label{ex:pro-po} A \emph{pro-polynomial ring} over $\mathbb{Z}$ is a complete topological $\mathbb{Z}$-algebra $\mathcal{P}$ which admits a fundamental system $(\mathfrak{p}_n)_{n\in \mathbb{N}}$ of open neighborhoods of $0_{\mathcal{P}}$ consisting of ideals $\mathfrak{p}_n$ such that for every $n$, the ring $\mathcal{P}/\mathfrak{p}_n$ is a $\mathbb{Z}$-algebra isomorphic to a polynomial ring 
and there exists $m>n$ such that $\mathfrak{p}_m\subset \mathfrak{p}_n$ and the induced surjective morphism $\mathcal{P}/\mathfrak{p}_m\to \mathcal{P}/\mathfrak{p}_n$ is a coordinate projection.  
So $\mathcal{P}$ is either a polynomial ring $\mathbb{Z}[T_1,\ldots,T_r]$ in finitely many indeterminates endowed with the discrete topology or is isomorphic as a topological $\mathbb{Z}$-algebra to the separated completion 
  $$\widehat{\mathbb{Z}[(T_i)_{i\in \mathbb{N}}]}:=\varprojlim_{n\in \mathbb{N}}(\mathbb{Z}[(T_i)_{i\in \mathbb{N}}]/(T_i)_{i\geq n})$$
  of the polynomial ring $\mathbb{Z}[(T_i)_{i\in \mathbb{N}}]$ in countably many indeterminates $(T_i)_{i\in \mathbb{N}}$ with respect to the topology induced by the fundamental system of open ideals
  $(T_i)_{i\geq n}\mathbb{Z}[(T_i)_{i\in \mathbb{N}}]$. 

  A pro-polynomial ring with coefficients in a complete topological ring $\mathcal{A}$ is a complete topological $\mathcal{A}$-algebra isomorphic to the completed tensor product 
  $\mathcal{A}\widehat{\otimes}_{\mathbb{Z}}\mathcal{P}$, that is, to the separated completion of the usual tensor product $\mathcal{A}\otimes_{\mathbb{Z}} \mathcal{P}$ with respect to the linear topology
generated by open neighborhoods of $0$ of the form 
$\mathfrak{a}_n\otimes \mathcal{P}+\mathcal{A}\otimes \mathfrak{p}_n$, where $\mathfrak{a}_n$ and $\mathfrak{p}_n$ run respectively through the ideals in a fundamental system of open neighborhoods of $0_{\mathcal{A}}$ and $0_{\mathcal{P}}$.  

In the case where $\mathcal{P}$ is a polynomial ring $\mathbb{Z}[T_1,\ldots,T_r]$ in finitely many indeterminates endowed with the discrete topology, the resulting topological ring $\mathcal{A}\widehat{\otimes}_{\mathbb{Z}}\mathcal{P}$ is nothing but the ring 
$$\mathcal{A}\{T_1,\ldots, T_r\}=\varprojlim_{n\in \mathbb{N}} ((\mathcal{A}/\mathfrak{a}_n)[T_1,\ldots, T_r])$$
of \emph{restricted power series} 
with coefficients in $\mathcal{A}$. As a ring, $\mathcal{A}\{T_1,\ldots, T_r\}$ can be identified with the sub-$\mathcal{A}$-algebra of the algebra of formal power series $\mathcal{A}[[T_1,\ldots, T_n]]$ with coefficients in $\mathcal{A}$
consisting of formal power series 
\[
\sum_{I=(i_{1},\ldots,i_{r})\in\mathbb{N}^{r}}a_{I}T_{1}^{i_{1}}\cdots T_{r}^{i_{r}}
\]
such that the family $(a_{I})_{I\in\mathbb{N}^{r}}$ converges to
$0_{\mathcal{A}}$ for the topology on $\mathcal{A}$. We refer the reader to Section \ref{ref:Restricted power series} in the Appendix for details and a discussion of  several additional properties of these topological algebras. 

In the case where $\mathcal{P}\cong \widehat{\mathbb{Z}[(T_i)_{i\in \mathbb{N}}]}$ 
the topological ring 
$$\mathcal{A}\widehat{\otimes}_\mathbb{Z}\mathcal{P}\cong \varprojlim_{n\in \mathbb{N}}(\mathcal{A}[(T_i)_{i\in \mathbb{N}}]/(\mathfrak{a}_n\mathcal{A}[(T_i)_{i\in \mathbb{N}}]+(T_i)_{i\geq n}\mathcal{A}[(T_i)_{i\in \mathbb{N}}]))$$ 
is isomorphic to the limit $\varprojlim_{n\in \mathbb{N}} \mathcal{B}_n$ of the inverse system of restricted power series rings $\mathcal{B}_n=\mathcal{A}\{(T_i)_{i\leq n}\}$ 
with surjective  continuous projections $p_{m,n}\colon \mathcal{B}_m \rightarrow
  \mathcal{B}_n$ with kernels
  $(T_{n+1},\ldots, T_{m})\mathcal{B}_m$ when $m>n$ and $0_{\mathcal{B}_m}$ when $m=n$, endowed with the limit topology. 
  \end{example}

\section{Restricted exponential homomorphisms and topologically integrable derivations}\label{sub:rest-exp-hom} \label{sub:exp-basic}\label{sec:2} 
We introduce the notion of restricted exponential homomorphisms for complete topological rings and then describe a one-to-one correspondence between such homomorphisms and suitable systems of continuous iterated higher derivations which extends the classical correspondence  between algebraic exponential homomorphisms and locally finite iterative higher derivations of the ring \cite{M68,CML05,F06}.

\subsection{Restricted exponential homomorphisms}
Recall that the ring $\mathbb{Z}[T]$, where $T$ is an indeterminate, carries the structure of a cocommutative Hopf algebra whose comultiplication, coinverse and counit are given respectively by the following $\mathbb{Z}$-algebra homomorphisms: 
\begin{align*}
\begin{array}{lcl}
m\colon \mathbb{Z}[T]\rightarrow \mathbb{Z}[T]\otimes_{\mathbb{Z}} \mathbb{Z}[T] \cong \mathbb{Z}[T,T'], & & T \mapsto T+T' \\
\iota\colon \mathbb{Z}[T] \rightarrow \mathbb{Z}[T],  & & T\mapsto -T \\
\epsilon\colon \mathbb{Z}[T]\rightarrow \mathbb{Z}, & & T\mapsto 0.
\end{array}
\end{align*}
Given any complete topological ring $\mathcal{A}$, the complete topological ring $\mathcal{A}\{T\}=\mathcal{A}\widehat{\otimes} \mathbb{Z}[T]$ of restricted power series with coefficients in $\mathcal{A}$ inherits the structure of a cocommutative topological Hopf $\mathcal{A}$-algebra with comultiplication $\mathrm{id}_{\mathcal{A}}\widehat{\otimes}m$, coinverse $\mathrm{id}_{\mathcal{A}}\widehat{\otimes} \iota$ and counit $\mathrm{id}_{\mathcal{A}}\widehat{\otimes} \epsilon$. 

\begin{defn}
\label{def:Restricted-Exp-Map}Let $\mathcal{A}$ be a complete topological
ring and let $\mathcal{B}$ be a complete topological $\mathcal{A}$-algebra. A \emph{restricted exponential $\mathcal{A}$-homomorphism} is a homomorphism of topological $\mathcal{A}$-algebras  $$\mathrm{e}\colon\mathcal{B}\rightarrow\mathcal{B}\{T\}=\mathcal{B}\widehat{\otimes}_{\mathbb{Z}}\mathbb{Z}[T]$$ which defines a coaction of the Hopf $\mathcal{A}$-algebra $\mathcal{A}\{T\}$ on $\mathcal{B}$. This means equivalently that the following diagrams of homomorphisms of topological $\mathcal{A}$-algebras are commutative:

\begin{align*}
\begin{array}{ccc}
\xymatrix{  \mathcal{B} \ar[r]^{\mathrm{e}} \ar[d]_{\mathrm{e}} & \mathcal{B}\{T\} \ar[d]^{\mathrm{id}_{\mathcal{B}}\widehat{\otimes} m}  \\  \mathcal{B}\{T\} \ar[r]^-{\mathrm{e}\widehat{\otimes}\mathrm{id}_{\mathbb{Z}[T]}} & \mathcal{B}\{T'\}\{T\}= \mathcal{B}\{T',T \} } & &  \xymatrix{ \mathcal{B} \ar[r]^{\mathrm{e}} \ar[dr]^{\mathrm{id}_{\mathcal{B}}} & \mathcal{B}\{T\} \ar[d]^{q=\mathrm{id}_{\mathcal{B}}\widehat{\otimes} \epsilon} \\ & \mathcal{B}= \mathcal{B}\{T\}/T \mathcal{B}\{T\}. } 
\end{array}
\end{align*}
\end{defn}

\medskip

Let $p_i\colon \mathcal{B}[[T]]=\prod_{i\in \mathbb{N}} \mathcal{B}\rightarrow \mathcal{B}$ denote the $i$-th projection. Then by definition of the topology on $\mathcal{B}\{T\}$ viewed as a sub-$\mathcal{A}$-algebra of $\mathcal{B}[[T]]$, the composition $\mathrm{e}_i=p_i\circ \mathrm{e}\colon \mathcal{B}\rightarrow \mathcal{B}$ is a homomorphism of topological $\mathcal{A}$-modules for every $i\in \mathbb{N}$ for which we can write $$\mathrm{e}=\sum_{i\in \mathbb{N}} \mathrm{e}_i T^i.$$
The commutativity of the right hand side diagram of Definition \ref{def:Restricted-Exp-Map} means that $\mathrm{e}_0=\mathrm{id}_{\mathcal{B}}$. On the other hand, with the identifications made, the homomorphisms $\mathrm{e}\widehat{\otimes}\mathrm{id}_{\mathbb{Z}[T]}$ and $\mathrm{id}_{\mathcal{B}}\widehat{\otimes}m$ are given by 
\begin{align*}
\mathrm{e}\widehat{\otimes}\mathrm{id}_{\mathbb{Z}[T]} & \colon\mathcal{B}\{T\}\rightarrow\mathcal{B}\{T',T\},\;\sum_{i \in \mathbb{N}}b_{i}T^{i}\mapsto\sum_{i \in \mathbb{N}}\mathrm{e}(b_{i})T^{i}=\sum_{(i,j)\in \mathbb{N}^2} \mathrm{e}_j(b_i){T'}^{j}T^{i} \\
\mathrm{id}_{\mathcal{B}}\widehat{\otimes}m & \colon\mathcal{B}\{T\}\rightarrow\mathcal{B}\{T',T\},\;\sum_{i \in \mathbb{N}}b_{i}T^{i}\mapsto\sum_{i \in \mathbb{N}}b_{i}(T'+T)^{i}.
\end{align*}
The commutativity of the left hand side diagram in Definition \ref{def:Restricted-Exp-Map} says that in $\mathcal{B}\{T,T'\}$, we have
\begin{equation}\label{eq:rest-exp-condition} 
\sum_{(i,j)\in \mathbb{N}^2}(\mathrm{e}_j\circ \mathrm{e}_i){T'}^{j}T^{i} = \sum_{\ell \in \mathbb{N}}\mathrm{e}_{\ell}(T'+T)^{\ell}.
\end{equation}

\begin{example} \label{ex:chouette}
Let $\widehat{\mathbb{Z}[(X_i)_{i\in \mathbb{N}}]}=\varprojlim_{n\in \mathbb{N}}(\mathbb{Z}[(X_i)_{i\in \mathbb{N}}]/(X_i)_{i\geq n})$ be 
a pro-polynomial ring as in Example \ref{ex:pro-po}. Then the homomorphisms of topological $\mathbb{Z}$-algebras
$\epsilon_\pm\colon \mathbb{Z}[(X_i)_{i\in \mathbb{N}}]\to \widehat{\mathbb{Z}[(X_i)_{i\in \mathbb{N}}]} \{T\}$ given  by
 $$X_i\mapsto \epsilon_+(X_i)=\sum_{j\geq i}X_jT^{j-i}\quad \mbox{and}\quad X_i\mapsto \epsilon_-(X_i)=\sum_{j\leq i}\tbinom{i}{j}X_jT^{i-j},\quad i\in \mathbb{N}$$
 induce restricted exponential $\mathbb{Z}$-homomorphisms $\mathrm{e}_\pm=\widehat{\epsilon_\pm}\colon \widehat{\mathbb{Z}[(X_i)_{i\in \mathbb{N}}]}\to\widehat{\mathbb{Z}[(X_i)_{i\in \mathbb{N}}]}\{T\}$, see Proposition \ref{lem:Compl-Homo-Extension} for the construction of these induced homomorphisms.  
\end{example}

\subsection{Topologically integrable iterated higher derivations} 


\begin{defn}
\label{Def:IHD}\label{def:Continuous-Der} Let $\mathcal{A}$ be a topological ring and let $\mathcal{B}$ be a topological $\mathcal{A}$-algebra. A continuous \emph{iterated
higher $\mathcal{A}$-derivation} of $\mathcal{B}$ is a collection $D=\big\{ D^{(i)}\big\} _{i\geq0}$ of homomorphisms of topological $\mathcal{A}$-modules
$D^{(i)}\colon \mathcal{B}\rightarrow\mathcal{B}$ which satisfy the following properties:
\begin{enumerate}
\item The homomorphism $D^{(0)}$ is the identity homomorphism of $\mathcal{B}$,

\item For every $i\geq0$, the \emph{Leibniz rule} $D^{(i)}(bb')=\sum_{j=0}^{i}D^{(j)}(b)D^{(i-j)}(b')$
holds for every pair of elements $b,b'\in\mathcal{B}$,

\item For every $i,j\geq0$, $D^{(i)}\circ D^{(j)}=\binom{i+j}{i}D^{(i+j)}$.
\end{enumerate}
\end{defn}

The first two properties imply in particular that
$\partial=D^{(1)}\colon \mathcal{B}\rightarrow\mathcal{B}$ is a
continuous $\mathcal{A}$-derivation of $\mathcal{B}$ into itself. If
$\mathcal{A}$ contains the field $\mathbb{Q}$ then the third property
implies that $D^{(i)}=\tfrac{1}{i!}\partial^{i}$ for every $i\geq0$,
where $\partial^i$ denotes the $i$-th iterate of $\partial$. In this
case, a continuous iterated higher $\mathcal{A}$-derivation is then
uniquely determined by a continuous $\mathcal{A}$-derivation
$\partial$ of $\mathcal{B}$ into itself. The notion of higher
derivation was first introduced by Hasse and Schmidt in \cite{HS37}.

\begin{defn} \label{def:Pro-LND}Let $\mathcal{A}$ be a topological ring and let $\mathcal{B}$ be a topological $\mathcal{A}$-algebra. A
  \emph{topologically integrable iterated higher $\mathcal{A}$-derivation} of $\mathcal{B}$ (an
  $\mathcal{A}$-TIIHD for short) is a continuous iterated higher $\mathcal{A}$-derivation $D=\left\{ D^{(i)}\right\} _{i\geq0}$ such
  that the collection of homomorphisms of topological $\mathcal{A}$-modules $(D^{(i)})_{i\geq 0}$ is uniformly equicontinuous and pointwise convergent to the zero endomorphism of $\mathcal{B}$ (see Definition \ref{continuous-convergence} ) .

  When $\mathcal{A}$ contains the field $\mathbb{Q}$, we say that a
  continuous $\mathcal{A}$-derivation $\partial$ is
  \emph{topologically integrable} if its associated continuous
  iterated higher $\mathcal{A}$-derivation
  $D=\left\{ \tfrac{1}{i!} \partial^{i}\right\} _{i\geq0}$ is 
  topologically integrable. 
\end{defn}

\begin{rem} When $\mathcal{A}$ and $\mathcal{B}$ are topological rings endowed with the discrete topology, the condition that the family $(D^{(i)})_{i\geq 0}$ is uniformly equicontiunous is automatically satisfied. On the other hand, pointwise convergence to the zero map means equivalently that for every element $b\in B$ there exists an integer $i_{0}$ such
that $D^{(i)}(b)=0$ for every $i\geq i_{0}$. We thus recover in this case the classical notions of locally finite iterated higher
$\mathcal{A}$-derivation of $\mathcal{B}$ (\cite{CML05,M68}) and locally nilpotent $\mathcal{A}$-derivation of $\mathcal{B}$ (\cite{F06}).
\end{rem}

\begin{lem} \label{lem:Der-Comp-Extension} \label{lem:Pro-LND-CompExt} With the notation of Definition \ref{Def:IHD}, let $c\colon \mathcal{B}\rightarrow\widehat{\mathcal{B}}$ be the separated
completion of $\mathcal{B}$. Then for every continuous iterated higher $\mathcal{A}$-derivation $D=\big\{ D^{(i)}\big\} _{i\geq0}$ of $\mathcal{B}$, 
there exists a unique continuous iterated higher $\mathcal{A}$-derivation $\widehat{D}=\big\{ \widehat{D}^{(i)}\big\} _{i\geq0}$ of $\widehat{\mathcal{B}}$ such that $\widehat{D}^{(i)}\circ c=c\circ D^{(i)}$ for every $i\geq 0$. 

Furthermore, if $D$ is topologically integrable, then so is $\widehat{D}$. 
\end{lem}

\begin{proof} The existence of a unique collection of homomorphism of topological $\mathcal{A}$-modules $\widehat{D}^{(i)}$ such that $\widehat{D}^{(i)}\circ c=c\circ D^{(i)}$ for every $i\geq 0$ follows from the universal property of the separated completion, see Proposition \ref{lem:Compl-Homo-Extension}. Moreover, by construction, the $\widehat{D}^{(i)}$ satisfy the three identities in Definition \ref{Def:IHD} in restriction to the subset $c(\mathcal{B})$ of $\widehat{\mathcal{B}}$. Since the latter in dense in $\widehat{\mathcal{B}}$ and $\widehat{\mathcal{B}}$ is separated, they also hold on the whole of $\widehat{\mathcal{B}}$. The second assertion follows from Proposition \ref{pro:Uniform_extention}. 
\end{proof}

\subsection{The correspondence }

\begin{thm} \label{top-int-1-1-exp}
Let $\mathcal{A}$ be complete topological ring and let $\mathcal{B}$ be a complete topological $\mathcal{A}$-algebra. Then the map 
which associates to a topologically integrable iterated higher $\mathcal{A}$-derivation $D=\big\{D^{(i)}\big\}_{i\geq 0}$ of $\mathcal{B}$ the $\mathcal{A}$-module homomorphism

$$\mathrm{e}_D=\exp(TD):=\sum_{i\geq 0} D^{(i)} T^i\colon \mathcal{B}\rightarrow \mathcal{B}\{T\}, \; b\mapsto \sum_{i\geq 0} D^{(i)}(b)T^i$$ is well-defined 
and induces a one-to-one correspondence between 
topologically integrable iterated higher $\mathcal{A}$-derivations of $\mathcal{B}$ and restricted exponential $\mathcal{A}$-homomorphisms $\mathcal{B}\rightarrow\mathcal{B}\{T\}$.
\end{thm}

\begin{proof}
Since each $D^{(i)}:\mathcal{B}\to \mathcal{B}$ is a homomorphism of topological $\mathcal{A}$-modules, it follows from the universal property of the product topology that there exists a unique homomorphism of topological $\mathcal{A}$-algebras $\epsilon_D:\mathcal{B}\to \prod_{i\geq 0}\mathcal{B}=\mathcal{B}[[T]]$ such that $D^{(i)}=p_i\circ\epsilon_D$. By Lemma \ref{lem:cc-converge-to-restricted-map}, the condition that the family $(D^{(i)})_{i\geq 0}$ uniformly continuous and pointwise convergent to the zero homomorphism is exactly guaranteeing that $\epsilon_D$ factorizes through a homomorphism of topological $\mathcal{A}$-algebras $\mathrm{e}_D:\mathcal{B}\to \mathcal{B}\{T\}$. The additional properties of $D=\{D_i\}_{i\geq 0}$ listed in Definition \ref{Def:IHD} are then precisely those which express the commutativity of the two diagrams in Definition \ref{def:Restricted-Exp-Map}, showing that $\mathrm{e}_D$ is a restricted exponential homomorphism. Conversely, for a restricted exponential homomorphism $\mathrm{e}=\sum_{i\geq 0} \mathrm{e}_iT^i\colon \mathcal{B}\rightarrow \mathcal{B}\{T\}$, it follows from the identities \eqref{eq:rest-exp-condition} expressing the commutativity of the two diagrams in Definition \ref{def:Restricted-Exp-Map} that the collection of homomorphisms of topological $\mathcal{A}$-modules $D^{(i)}=\mathrm{e}_i$, $i\geq 0$, is a continuous iterated higher $\mathcal{A}$-derivation of $\mathcal{B}$. Since $e$ is continuous, it follows in turn from the definition of $\mathcal{B}\{T\}$ as a sub-$\mathcal{A}$-algebra of $\mathcal{B}[[T]]$ and its topology that the so-defined family $(D^{(i)})_{i\geq 0}$ is uniformly equicontinuous and pointwise convergent to the zero homomorphism.
\end{proof}

\begin{rem}
    In the case where the base topological ring $\mathcal{A}$ contains the field $\mathbb{Q}$, the fact that  a continuous iterated higher $\mathcal{A}$-derivation $D=\big\{D^{(i)}\big\}_{i\geq 0}$, is uniquely determined by the continuous $\mathcal{A}$-derivation $\partial=D^{(1)}$ of $\mathcal{B}$ implies that a restricted exponential $\mathcal{A}$-homomorphism $\mathrm{e}=\sum_{i\geq 0}\mathrm{e}_i T^i\colon \mathcal{B}\rightarrow \mathcal{B}\{T\}$ is uniquely determined by the topologically integrable $\mathcal{A}$-derivation $$\partial=\mathrm{e}_1=\tfrac{\partial}{\partial T}|_{T=0}\circ \mathrm{e}.$$ 
    
\end{rem}

\begin{example}
\label{exa:Standard-PLFIHD-RPS}Let $\mathcal{A}$ be a complete topological ring and let $\mathcal{B}=\mathcal{A}\{S\}$. The comultiplication $$m:\mathcal{B}\to \mathcal{B}\{T\}=\mathcal{A}\{S,T\}, \; S\mapsto S+T$$ of the Hopf $\mathcal{A}$-algebra $\mathcal{B}$ is a restricted exponential homomorphism whose corresponding topologically integrable iterated higher $\mathcal{A}$-derivation is given by the collection of homomorphisms
\[
D^{(i)}=(\frac{1}{i!}\frac{\partial^{i}}{\partial T^{i}}|_{T=0})\circ m\colon \mathcal{B}\rightarrow\mathcal{B}
\]
associating to an element $P(S)\in \mathcal{B}$ the $i$-th coefficient of the Taylor expansion
at $0$ of $P(S+T)\in \mathcal{B}\{T\}$ with respect to the variable $T$. In particular, $D^{(1)}$ is simply the $\mathcal{A}$-derivation $\tfrac{\partial}{\partial S}$
of $\mathcal{B}$. 
\end{example}

The following example illustrates the importance of uniform equicontinuity in the correspondence between topologically integrable $\mathcal{A}$-derivation and restricted exponential homomorphisms.

\begin{example}\label{ex:non-contconv-der}
Let $\mathcal{B}$ the 
separated completion of the polynomial ring $\mathbb{Q}[(X_i)_{i\in \mathbb{N}}]$ in countably many indeterminates $X_i$ with respect to the linear topology generated by the open ideal $\mathfrak{q}_n=(X_i)_{i\geq n}\mathbb{Q}[(X_i)_{i\in \mathbb{N}}]$ of $0$. 
The $\mathbb{Q}$-derivation $\partial$  of $\mathbb{Q}[(X_i)_{i
  \in \mathbb{N}}]$ defined by $$\partial(X_0)=X_1,\quad
\partial(X_{2i-1})=X_{2i+1}, \textrm{ and}\quad \partial(X_{2i})=X_{2i-2} \quad \forall i\geq 1.$$ is continuous and pointwise convergent to $0$. But the collection of homomorphisms of topological $\mathbb{Q}$-algebras $(\frac{1}{i!}\partial^i)_{i\geq 0}$ is not uniformly equicontinuous.   Indeed, since $\partial^{\ell}(X_{2\ell})=X_0$,
it follows that for any given  $i\geq 0$ there cannot exist any
integer $n_0$ such that
$\partial^{n}(\mathfrak{q}_{i})\subseteq
\mathfrak{q}_{1}$ for every $n\geq n_0$. This implies in turn that the associated
$\mathbb{Q}$-derivation $\widehat{\partial}$ of the pro-polynomial $\mathbb{Q}$-algebra $\mathcal{B}$ is not
topologically integrable. Here, since the collection $(\frac{1}{i!}\widehat{\partial}^i)_{i\geq 0}$ is pointwise convergent to $0$, the exponential homomorphism
$\exp(T\widehat{\partial})\colon \mathcal{B}\rightarrow \mathcal{B}\{T\}$ is well defined as a ring homomorphism, but it is not continuous.
\end{example}

\section{Basic properties of restricted exponential homomorphisms} \label{sec:3}
In the next subsections, we establish general properties of restricted exponential homomorphisms. We leave to the reader the pleasure to reformulate, through the correspondence given in Theorem  \ref{top-int-1-1-exp}, each of these results in the equivalent language of topologically integrable higher iterated derivations, see nevertheless subsection \ref{subsec:top-der-summary} for a summary in the topologically integrable derivation setting.  

\subsection{Rings of invariants and associated restricted exponential homomorphisms}

\begin{defn} Let $\mathcal{B}$ be a complete topological ring and let $\mathrm{e}=\sum_{i\geq 0}\mathrm{e}_i T^i\colon\mathcal{B}\rightarrow \mathcal{B}\{T\}$ be a restricted exponential homomorphism. We say that en element $b\in\mathcal{B}$ is $\mathrm{e}$-\emph{invariant} if $\mathrm{e}(b)=i_{0}(b)$, where $i_0:\mathcal{B}\to\mathcal{B}\{T\}$ denotes the inclusion of the subring of constant restricted power series. We denote by $$\mathcal{B}^{\mathrm{e}}=\operatorname{Ker}(\mathrm{e}-i_{0})=\bigcap_{i\geq 1}\mathrm{Ker}(\mathrm{e}_i)\subseteq \mathcal{B} $$ the subset of all $\mathrm{e}$-invariant elements of $\mathcal{B}$, endowed with the induced topology.
\end{defn}

\begin{example}

    Let $\mathrm{e}\colon\mathcal{B}\to\mathcal{B}\{T\}$ be a restricted exponential homomorphism. Then, for every element $b\in \mathcal{B}$ such that $\mathrm{e}(b)$ is a non-constant polynomial, it follows from the identity \eqref{eq:rest-exp-condition} that the leading coefficient of $\mathrm{e}(b)$ belongs to $\mathcal{B}^{\mathrm{e}}$. In Example~\ref{ex:chouette}, the image by $\mathrm{e}_+$ of every element in  $\mathcal{B}$ not in the image of the canonically induced injective homomorphism $i\colon\mathbb{Z}\to\mathcal{B}$ is a restricted power series which is not a polynomial. One can actually check further that $\mathcal{B}^{\mathrm{e}_+}=i(\mathbb{Z})$, see also Example \ref{exa:translation}. In contrast, in the same example $\mathcal{B}^{\mathrm{e}_-}$ contains in particular the image of $\mathbb{Z}[X_0]$.

\end{example}

\begin{prop}\label{lem:kernel} Let $\mathcal{B}$ be a complete topological ring and let $\mathrm{e}=\sum_{i\geq 0} \mathrm{e}_i T^i\colon \mathcal{B}\rightarrow \mathcal{B}\{T\}$ be a restricted exponential homomorphism. Then the following hold:

a) The set $\mathcal{B}^{\mathrm{e}}$ is a complete topological subring of $\mathcal{B}$.

b) For every $i\geq 1$, the homomorphism $\mathrm{e}_i\colon \mathcal{B}\rightarrow \mathcal{B}$ is a homomorphism of topological $\mathcal{B}^{\mathrm{e}}$-modules. 

c) If $\mathcal{B}$ admits a fundamental system $(\mathfrak{b}_{n})_{n\in\mathbb{N}}$ of open ideals consisting of \emph{prime} ideals of $\mathcal{B}$ then $\mathcal{B}^{\mathrm{e}}$ is factorially closed in $\mathcal{B}$. In particular, every invertible element of $\mathcal{B}$ is contained in $\mathcal{B}^{\mathrm{e}}$.
\end{prop}

\begin{proof} The fact that $\mathcal{B}^{\mathrm{e}}$ is a subgring of $\mathcal{B}$ is clear. Since $\mathcal{B}\{T\}$ is complete, hence separated, $\{0\}$ is a closed subset of $\mathcal{B}\{T\}$. Since $\mathrm{e}-i_{0}\colon \mathcal{B}\rightarrow\mathcal{B}\{T\}$ is a homomorphism of topological groups, $\mathcal{B}^{\mathrm{e}}$ is  a closed subgroup of $\mathcal{B}$, hence a complete topological group since $\mathcal{B}$ is complete. Assertion b) is clear from the definition of  $\mathcal{B}^{\mathrm{e}}$ and the homomorphisms $\mathrm{e}_i$. 

Now let $(\mathfrak{b}_{n})_{n\in\mathbb{N}}$ be fundamental system of open ideal of $\mathcal{B}$ consisting of prime ideals of $\mathcal{B}$ and let 
\[
\pi_{n}\colon\mathcal{B}\{T\}=\varprojlim_{n\in\mathbb{N}}(\mathcal{B}/\mathfrak{b}_{n})[T]\rightarrow(\mathcal{B}/\mathfrak{b}_{n})[T],\quad n\in\mathbb{N},
\]
be the canonical projections.  Let $b,b'\in \mathcal{B}$ such that $\mathrm{e}(bb')=\mathrm{e}(b)\mathrm{e}(b')=bb'$. For every $n\geq0$, we have 
\[
\pi_{n}(\mathrm{e}(b)\mathrm{e}(b'))=(\sum_{i \in \mathbb{N}}\pi_{n}(\mathrm{e}_{i}(b))T^{i})(\sum_{i \in \mathbb{N}}\pi_{n}(\mathrm{e}_i(b'))T^{i})=\pi_{n}(bb')=\pi_{n}(b)\pi_{n}(b')
\]
in the integral domain $(\mathcal{B}/\mathfrak{b}_{n})[T]$. It follows that $\pi_{n}(\mathrm{e}_{i}(b))=\pi_{n}(\mathrm{e}_{i}(b'))=0$ for every $i\geq1$. This implies that for every $i\geq1$, $\mathrm{e}_{i}(b)$ and $\mathrm{e}_i(b')$ belong to $\bigcap_{n\geq1}\mathfrak{b}_{n}=\{0\}$ as $\mathcal{B}$ is separated. Thus $\mathrm{e}(b)=b$ and $\mathrm{e}(b')=b'$. Finally, if $b\in\mathcal{B}$ is invertible, then $bb^{-1}=1\in\mathcal{B}^{\mathrm{e}}$ and so, $b$ and $b^{-1}$ belong to $\mathcal{B}^{\mathrm{e}}$.
\end{proof}

\begin{example} \label{ex:non-fc-kernel} If the topology on $\mathcal{B}$ is the discrete one, the existence of a fundamental system of open prime ideals of $\mathcal{B}$ is equivalent to the property that $\mathcal{B}$ is an integral domain. This is no longer true in general, and the conclusion of assertion c) in Proposition \ref{lem:kernel} does not hold under the weaker assumption that $\mathcal{B}$ is integral. Indeed, let $\mathcal{B}=\varprojlim_{n\in\mathbb{N}}\mathbb{C}[u]/(u^{n})\cong\mathbb{C}[[u]]$
be the completion of $\mathbb{C}[u]$ for the $u$-adic topology. The homomorphism of $\mathbb{C}$-algebras $\mathbb{C}[u]\rightarrow \mathbb{C}[[u]]\{T\}$
defined by  \[
u\mapsto u\sum_{i \in \mathbb{N}}(uT)^{i}=\sum_{i \in \mathbb{N}}u^{i+1}T^{i}
\] 
induces a uniquely determined continuous homomorphism $\mathrm{e}\colon \mathcal{B}\rightarrow\mathcal{B}\{T\}$
which satisfies the axioms of a restricted exponential
$\mathbb{C}$-homomorphism and whose ring of invariants $\mathcal{B}^e$ equals the subring of constant formal power series. In particular, the invertible element $1-u$ of $\mathcal{B}$ does not belong to $\mathcal{B}^{\mathrm{e}}$.
\end{example}

\begin{prop}\label{lem:replicat} Let $\mathcal{B}$ be a complete topological ring and let $\mathrm{e}=\sum_{i\geq 0} \mathrm{e}_iT^i\colon\mathcal{B}\rightarrow \mathcal{B}\{T\}$ be a restricted exponential homomorphism. Then for every element $a\in \mathcal{B}^{\mathrm{e}}$, the homomorphism $$\mathrm{e}_{\lambda(a)}:=\lambda(a)\circ\mathrm{e}\colon\mathcal{B} \stackrel{\mathrm{e}}{\rightarrow} \mathcal{B}\{T\} \stackrel{T\mapsto aT}{\longrightarrow} \mathcal{B}\{T\}$$  is a restricted exponential homomorphism. 
\end{prop}
\begin{proof} Since the homomorphisms $\mathrm{e}$ and $\lambda(a)$ are continuous, so is $\mathrm{e}_{\lambda(a)}$. The commutativity of the right hand side diagram in Definition \ref{def:Restricted-Exp-Map} for $\mathrm{e}_{\lambda(a)}$ is clear. We have $$\mathrm{e}_{\lambda(a)}=\sum_{i\geq 0} \mathrm{e}_i(aT)^i=\sum_{i \geq 0} (a^i\mathrm{e}_i)T^i,$$
where $a^i\mathrm{e}_i$ is the homomorphism defined by $b\mapsto a^i\mathrm{e}_i(b)$ for every $b\in \mathcal{B}$. Since $a\in \mathcal{B}^{\mathrm{e}}$, $a^i\in \mathcal{B}^{\mathrm{e}}$ for every $i\geq 0$. Since by  Proposition  \ref{lem:kernel} b) each $\mathrm{e}_i$, $i\geq 1$, is a homomorphism of topological $\mathcal{B}^{\mathrm{e}}$-module, applying \eqref{eq:rest-exp-condition}, we obtain 
\begin{align*}
\begin{array}{rcl}
 (\mathrm{e}_{\lambda(a)}\widehat{\otimes}\mathrm{id}_{\mathbb{Z}[T]})\circ \mathrm{e}_{\lambda(a)} &= &\sum_{i \in \mathbb{N}} (\mathrm{e}_{\lambda(a)}\circ (a^i\mathrm{e}_i))T^i \\
& = & \sum_{(i,j) \in \mathbb{N}^2}  (\mathrm{e}_j\circ(a^i\mathrm{e}_i))(aT')^jT^i \\
& = & \sum_{(i,j) \in \mathbb{N}^2}  (\mathrm{e}_j\circ\mathrm{e}_i)(aT')^j(aT)^i \\
& = & \sum _{\ell \in \mathbb{N}} \mathrm{e}_\ell (aT'+aT)^\ell= (\mathrm{id}_{\mathcal{B}}\widehat{\otimes}m)\circ \mathrm{e}_{\lambda(a)}.
\end{array}
\end{align*} 
This shows that the commutativity of left hand side diagram  in Definition \ref{def:Restricted-Exp-Map} is satisfied for $\mathrm{e}_{\lambda(a)}$. 
\end{proof}

\begin{prop}\label{prop:Localization-Kernel} Let $\mathcal{B}$ be a complete topological ring and let $\mathrm{e}=\sum_{i\geq 0}\mathrm{e}_iT^i\colon \mathcal{B} \rightarrow \mathcal{B}\{T\}$ be a restricted exponential homomorphism. Let $S\subset \mathcal{B}^{\mathrm{e}}$ be a multiplicatively closed subset, let $\widetilde{j}\colon\mathcal{B}\rightarrow \widehat{S^{-1}\mathcal{B}}$ be the separated completed localization homomorphism and let $\widetilde{j}_T\colon\mathcal{B}\{T\}\rightarrow \widehat{S^{-1}\mathcal{B}}\{T\}$ be the induced homomorphism. 

Then there exists a unique restricted exponential homomorphism $\widehat{S^{-1}\mathrm{e}}\colon\widehat{S^{-1}\mathcal{B}}\rightarrow\widehat{S^{-1}\mathcal{B}}\{T\}$
such that $\widetilde{j}_{T}\circ\mathrm{e}=\widehat{S^{-1}\mathrm{e}}\circ\widetilde{j}$
\end{prop}

\begin{proof}
We identify $S\subset \mathcal{B}$ with $i_0(S)\subset \mathcal{B}\{T\}$ and $\widehat{ S^{-1}(\mathcal{B}\{T\})}$ with $\widehat{S^{-1}\mathcal{B}}\{T\}$ by the canonical isomorphism of Lemma \ref{lem:Spe-Comp-Loc-Restricted-degree-0}.  Since $S\subset\mathcal{B}^{\mathrm{e}}$,we have $\mathrm{e}(S)=S\subset \mathcal{B}\{T\}$ so that by the universal property of separated completed localization, there exists a unique homomorphism of topological rings $$\widehat{S^{-1}\mathrm{e}}\colon\widehat{S^{-1}\mathcal{B}}\rightarrow \widehat{S^{-1}\mathcal{B}}\{T\}$$ such that $\widetilde{j}_{T}\circ\mathrm{e}=\widehat{S^{-1}\mathrm{e}}\circ\widetilde{j}$. Since $S\subset \mathcal{B}^{\mathrm{e}}$ and since $\mathrm{e}_i$ is a homomorphism of topological $\mathcal{B}^{\mathrm{e}}$-module for every $i\geq 0$ by Proposition \ref{lem:kernel} b), it follows that each $\mathrm{e}_i$, $i\in \mathbb{N}$, induces a uniquely determined homomorphism of topological $S^{-1}\mathcal{B}^{\mathrm{e}}$-modules $S^{-1}\mathrm{e}_i\colon S^{-1}\mathcal{B}\rightarrow S^{-1}\mathcal{B}$, hence by the universal property of separated completed localization, a homomorphism  $\widehat{S^{-1}\mathrm{e}_i}\colon \widehat{S^{-1}\mathcal{B}}\rightarrow \widehat{S^{-1}\mathcal{B}}$ of topological $\widehat{S^{-1}\mathcal{B}^{\mathrm{e}}}$-modules. By construction, we then have  $$\widehat{S^{-1}\mathrm{e}}=\sum_{i\in \mathbb{N}} \widehat{S^{-1}\mathrm{e}_i} T^i.$$
To show that $\widehat{S^{-1}\mathrm{e}}$ is a restricted exponential homomorphism, it is enough to check the commutativity of the two diagrams of Definition \ref{def:Restricted-Exp-Map} in restriction to the dense image of $S^{-1}\mathcal{B}$ in $\widehat{S^{-1}\mathcal{B}}$ by the separated completion morphism $c\colon S^{-1}\mathcal{B}\rightarrow \widehat{S^{-1}\mathcal{B}}$. 
Let $x=s^{-1} b\in S^{-1}\mathcal{B}$, where $b\in \mathcal{B}$. Then by definition of $\widehat{S^{-1}\mathrm{e}}$, we have $$\widehat{S^{-1}\mathrm{e}}(c(x))=\sum_{i\in \mathbb{N}}\widehat{S^{-1}\mathrm{e}_i}(c(x)) T^i= c_T\big(\sum_{i \in \mathbb{N}} (S^{-1}\mathrm{e}_i)(x)T^{i}\big)=c_T\big(\sum_{i \in \mathbb{N}} s^{-1}\mathrm{e}_i(b)T^{i}\big),$$ where $c_T\colon S^{-1}\mathcal{B}\{T\}\rightarrow \widehat{S^{-1}\mathcal{B}}\{T\}$ denotes the separated completion homomorphism. This immediately implies the commutativity of the right hand side diagram of  Definition \ref{def:Restricted-Exp-Map}. On the other hand, letting $c_{T,T'}\colon S^{-1}\mathcal{B}\{T,T'\}\rightarrow \widehat{S^{-1}\mathcal{B}}\{T,T'\}$ be the separated completion homomorphism, we have

\begin{align*}
\begin{array}{rcl}
(\widehat{S^{-1}\mathrm{e}}\widehat{\otimes}\mathrm{id}_{\mathbb{Z}[T]})(\widehat{S^{-1}\mathrm{e}}(c(x)) & = & \sum_{i\in\mathbb{N}}\widehat{S^{-1}\mathrm{e}}(c(s^{-1}\mathrm{e}_{i}(b)))T^{i} \\
& = & \sum_{(i,j)\in\mathbb{N}}\widehat{S^{-1}\mathrm{e}}_{j}(c(s^{-1}\mathrm{e}_{i}(b))){T'}^{j}T^{i} \\
& = & c_{T,T'}(\sum_{(i,j)\in\mathbb{N}^2}s^{-1}\mathrm{e}_{j}((s^{-1}\mathrm{e}_{i}(b))){T'}^{j}T^{i}) \\
& = & c_{T,T'}(\sum_{(i,j)\in\mathbb{N}^2}s^{-1}\mathrm{e}_{j}(\mathrm{e}_{i}(b)){T'}^{j}T^{i}) \\
& = & c_{T,T'}(\sum_{\ell\in\mathbb{N}}s^{-1}\mathrm{e}_{\ell}(b)(T'+T)^{\ell}) \\
& = & c_{T,T'}(\sum_{\ell\in\mathbb{N}}S^{-1}\mathrm{e}_{\ell}(x)(T'+T)^{\ell}) \\
& = & (\sum_{\ell\in\mathbb{N}}\widehat{S^{-1}\mathrm{e}_{\ell}}(c(x))(T'+T)^{\ell})=(\mathrm{id}_{\mathcal{B}}\widehat{\otimes}m)(\widehat{S^{-1}\mathrm{e}}(c(x)), 
\end{array}
\end{align*}
which shows the commutativity of the left hand side diagram. 
\end{proof}

\begin{prop}\label{prop:induced-quotient-exp} %
  Let $\pi\colon \mathcal{B}\rightarrow \mathcal{C}$ be a surjective open
  homomorphism of complete topological rings. Let
  $I=\operatorname{Ker}(\pi)$ and let
  $\mathrm{e}\colon \mathcal{B}\rightarrow \mathcal{B}\{T\}$ be a restricted
  exponential homomorphism. Assume that
  $\mathrm{e}(I)\subset i_0(I)\mathcal{B}\{T\}$. Then there exists a
  unique restricted exponential homomorphism
  $\overline{\mathrm{e}}\colon \mathcal{C}\rightarrow \mathcal{C}\{T\}$ such
  that $\pi_{T}\circ \mathrm{e}=\overline{\mathrm{e}}\circ\pi$, where
  $\pi_T\colon \mathcal{B}\{T\}\rightarrow \mathcal{C}\{T\}$ is the unique
  homomorphism of topological $\mathcal{B}$-algebras which maps $T$ to
  $T$.
\end{prop}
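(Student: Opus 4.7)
The plan is first to construct $\overline{\mathrm{e}}$ as an abstract ring homomorphism, then to see that it is continuous by exploiting the fact that $\pi$ is a topological quotient map, and finally to verify the coaction axioms by pushing the corresponding diagrams for $\mathrm{e}$ through $\pi$. Throughout, I will use the auxiliary continuous $\mathcal{B}$-algebra homomorphism $\pi_{T,T'}\colon \mathcal{B}\{T,T'\}\rightarrow \mathcal{C}\{T,T'\}$ sending $T,T'$ to $T,T'$, furnished by Proposition~\ref{prop:univ-prop-restricted}, and I will write $q_{\mathcal{B}}=\mathrm{id}_{\mathcal{B}}\widehat{\otimes}\epsilon$ and $q_{\mathcal{C}}=\mathrm{id}_{\mathcal{C}}\widehat{\otimes}\epsilon$ for the counits.

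\emph{Step 1: factorization.} By the universal property of restricted power series, $\pi_T\circ i_0=i_0\circ\pi$ as continuous ring homomorphisms $\mathcal{B}\rightarrow \mathcal{C}\{T\}$. Since $\pi(I)=0$, it follows that $\pi_T$ annihilates $i_0(I)$, and hence also the ideal $i_0(I)\mathcal{B}\{T\}$. By hypothesis, $\mathrm{e}(I)\subseteq i_0(I)\mathcal{B}\{T\}$, so $\pi_T\circ \mathrm{e}$ vanishes on $I$. Identifying $\mathcal{C}$ with the ring-theoretic quotient $\mathcal{B}/I$, this produces a unique ring homomorphism $\overline{\mathrm{e}}\colon \mathcal{C}\rightarrow \mathcal{C}\{T\}$ such that $\overline{\mathrm{e}}\circ \pi=\pi_T\circ \mathrm{e}$. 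Surjectivity of $\pi$ yields the uniqueness in the statement.

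\emph{Step 2: continuity.} The assumption that $\pi$ is surjective and open implies that $\pi$ is a topological quotient map. Hence $\overline{\mathrm{e}}$ is continuous if and only if $\overline{\mathrm{e}}\circ \pi=\pi_T\circ \mathrm{e}$ is, which it is as a composition of homomorphisms of topological rings.

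\emph{Step 3: coaction axioms.} I push the two diagrams of Definition~\ref{def:Restricted-Exp-Map} for $\mathrm{e}$ through $\pi$. The identities $q_{\mathcal{C}}\circ \pi_T=\pi\circ q_{\mathcal{B}}$ and $\pi_{T,T'}\circ (\mathrm{id}_{\mathcal{B}}\widehat{\otimes}m)=(\mathrm{id}_{\mathcal{C}}\widehat{\otimes}m)\circ \pi_T$, as well as $\pi_{T,T'}\circ (\mathrm{e}\widehat{\otimes}\mathrm{id}_{\mathbb{Z}[T]})=(\overline{\mathrm{e}}\widehat{\otimes}\mathrm{id}_{\mathbb{Z}[T]})\circ \pi_T$, all follow from the universal property of restricted power series since both sides of each equality agree on the generators. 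For the counit axiom this gives
\[
q_{\mathcal{C}}\circ \overline{\mathrm{e}}\circ \pi=q_{\mathcal{C}}\circ \pi_T\circ \mathrm{e}=\pi\circ q_{\mathcal{B}}\circ \mathrm{e}=\pi=\mathrm{id}_{\mathcal{C}}\circ \pi,
\]
and for coassociativity a parallel computation shows that $(\overline{\mathrm{e}}\widehat{\otimes}\mathrm{id}_{\mathbb{Z}[T]})\circ \overline{\mathrm{e}}\circ \pi$ and $(\mathrm{id}_{\mathcal{C}}\widehat{\otimes}m)\circ \overline{\mathrm{e}}\circ \pi$ both equal $\pi_{T,T'}\circ (\mathrm{e}\widehat{\otimes}\mathrm{id})\circ \mathrm{e}=\pi_{T,T'}\circ (\mathrm{id}_{\mathcal{B}}\widehat{\otimes}m)\circ \mathrm{e}$. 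In both cases the surjectivity of $\pi$ allows me to cancel it on the right and conclude.

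The main obstacle is keeping the compatibilities between the several variants $\pi_T$, $\pi_{T,T'}$ of the quotient-on-coefficients map straight, but once these are established via the universal property of Proposition~\ref{prop:univ-prop-restricted}, every verification reduces to a short diagram chase. No continuity subtleties arise beyond the elementary observation of Step~2 that surjective open homomorphisms of topological rings are quotient maps.
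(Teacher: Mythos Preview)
Your proof is correct. The approach is essentially the same as the paper's, but organized more economically: the paper first factors $\mathrm{e}$ through the algebraic quotient $\mathcal{B}\{T\}/i_0(I)\mathcal{B}\{T\}\cong \mathcal{C}\otimes_{\mathcal{B}}\mathcal{B}\{T\}$, checks continuity there, and then composes with the separated completion homomorphism to land in $\mathcal{C}\widehat{\otimes}_{\mathcal{B}}\mathcal{B}\{T\}\cong\mathcal{C}\{T\}$; you bypass this intermediate step by factoring $\pi_T\circ\mathrm{e}$ directly through the surjection $\pi$ and invoking that a surjective open continuous map is a topological quotient. Both routes use openness of $\pi$ in the same essential way, and the paper dismisses the coaction diagrams as ``straightforward to check'' whereas you spell them out; the one identity that is not purely a generator check, namely $\pi_{T,T'}\circ(\mathrm{e}\widehat{\otimes}\mathrm{id}_{\mathbb{Z}[T]})=(\overline{\mathrm{e}}\widehat{\otimes}\mathrm{id}_{\mathbb{Z}[T]})\circ\pi_T$, does follow from Proposition~\ref{prop:univ-prop-restricted} once one observes that both sides restrict to $\overline{\mathrm{e}}\circ\pi=\pi_{T'}\circ\mathrm{e}$ on $\mathcal{B}$ and send $T\mapsto T$.
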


\begin{proof} The assumptions imply that $\mathrm{e}$ induces a unique homomorphism of rings $$\widetilde{\mathrm{e}}\colon  \mathcal{C}=\mathcal{B}/I\rightarrow 
\mathcal{B}\{T\}/i_0(I)\mathcal{B}\{T\}\cong \mathcal{B}/I\otimes_{\mathcal{B}} \mathcal{B}\{T\}\cong \mathcal{C}\otimes_{\mathcal{B}} \mathcal{B}\{T\} $$ such that $(\pi\otimes \mathrm{id}_{\mathcal{B}\{T\}})\circ \mathrm{e}=\widetilde{\mathrm{e}}\circ \pi$. Since $\pi$ is open, $\widetilde{\mathrm{e}}$ is continuous when we endow the ring $\mathcal{C}\otimes_{\mathcal{B}} \mathcal{B}\{T\}$ with the linear topology generated by open ideals of the form $U_{\mathcal{C}}\otimes \mathcal{B}\{T\}+\mathcal{C}\otimes V_{\mathcal{B}\{T\}}$ where $U_{\mathcal{C}}$ and $ V_{\mathcal{B}\{T\}}$ run through the sets of open ideals of $\mathcal{C}$ and $\mathcal{B}\{T\}$ respectively. Note also that $\pi\otimes \mathrm{id}_{\mathcal{B}\{T\}}$ is an open homomorphism of topological rings. 
The composition $\overline{\mathrm{e}}\colon  \mathcal{C}\rightarrow \mathcal{C}\widehat{\otimes}_{\mathcal{B}}\mathcal{B}\{T\}\cong \mathcal{C}\{T\}$ of $\widetilde{\mathrm{e}}$ with the separated completion homomorphism $c\colon  \mathcal{C}\otimes_{\mathcal{B}} \mathcal{B}\{T\} \rightarrow \mathcal{C}\widehat{\otimes}_{\mathcal{B}}\mathcal{B}\{T\}$ is then a homomorphism of topological rings. The commutativity of the diagrams in Definition  \ref{def:Restricted-Exp-Map} for $\bar{e}$ is easy to check. 
\end{proof}

\subsection{Operations on restricted exponential homomorphism}
\begin{prop} \label{lem:sum-restr-exp} Let $\mathcal{B}$ be a complete topological ring and let $\Delta \colon  \mathcal{B}\{T,T'\}\rightarrow \mathcal{B}\{T''\}$ denotes the unique homomorphism of topological $\mathcal{B}$-algebras which maps $T$ and $T'$ to $T''$. Let $\mathrm{e}\colon \mathcal{B}\rightarrow \mathcal{B}\{T\}$ and $\mathrm{e}'\colon \mathcal{B}\rightarrow \mathcal{B}\{T'\}$  be restricted exponential homomorphisms such that the following diagram commutes 

\[
\xymatrix{
\mathcal{B}\ar[ddr]_{\mathrm{e}'} \ar[rrd]^{\mathrm{e}} \ar@/_23mm/[dddrrr] \ar@/^20mm/[rrrddd]  \\
& &\mathcal{B}\{T\}\ar[d]^{\mathrm{e}'\widehat{\otimes}\mathrm{id}_{\mathbb{Z}[T]}}&\\
&\mathcal{B}\{T'\}\ar[r]_{\mathrm{e}\widehat{\otimes}\mathrm{id}_{\mathbb{Z}[T']}}& \mathcal{B}\{T,T'\} \ar[rd]_{\Delta}     \\
& & & \mathcal{B}\{T''\}.
}
\]
Then the map $\mathrm{e}''\colon \mathcal{B}\rightarrow \mathcal{B}\{T''\}$ defined by $$\mathrm{e}''= \Delta\circ(\mathrm{e}\widehat{\otimes}\mathrm{id}_{\mathbb{Z}[T']}) \circ \mathrm{e}'=\Delta\circ(\mathrm{e}'\widehat{\otimes}\mathrm{id}_{\mathbb{Z}[T]})\circ \mathrm{e}$$ is a restricted exponential homomorphism.
\end{prop}

\begin{proof} Being the composition of homomorphisms of topological
  rings, $\mathrm{e}''$ is a homomorphism of topological rings. Denoting
  $T''$ by $T_0$, we have, by definition of $\mathrm{e}''$,
\begin{equation}\label{eq:commutative}
\mathrm{e}''=\sum_{n\in \mathbb{N}} \mathrm{e}_n'' T_0^n=\sum_{(i,j)\in \mathbb{N}^2} (\mathrm{e}_j\circ \mathrm{e}_i')T_0^{i+j}.
\end{equation}
This implies in particular that
$\mathrm{e}_0''=\mathrm{e}_0'\circ
\mathrm{e}_0=\mathrm{id}_{\mathcal{B}}$, hence that the commutativity
of the right hand side diagram of Definition
\ref{def:Restricted-Exp-Map} holds for $\mathrm{e}''$.  Combining
Equation \eqref{eq:commutative} above with Equation
\eqref{eq:rest-exp-condition} for $\mathrm{e}$ and $\mathrm{e}'$, we
obtain on the other hand that
\begin{equation*}
\begin{array}{rcl}
  (\mathrm{id}_{\mathcal{B}}\widehat{\otimes}m)\circ \mathrm{e}''&= &\sum_{(i,j)\in\mathbb{N}^{2}}(\mathrm{e}_{j}'\circ \mathrm{e}_{i})(T_{0}+T_{0}')^{i+j}\\
                                                             &= &\sum_{(i,k,\ell)\in\mathbb{N}^{3}}(\mathrm{e}_{\ell}'\circ\mathrm{e}_{k}'\circ \mathrm{e}_{i}){T_{0}'}^{\ell}T_{0}^{k}(T_{0}+T_{0}')^{i}\\
                                                             &= & \sum_{(i,k,\ell)\in\mathbb{N}^{3}}(\mathrm{e}_{i}\circ \mathrm{e}_{\ell}'\circ \mathrm{e}_{k}')(T_{0}+T_{0}')^{i}{T_{0}'}T_{0}^{k}\\
                                                             &= &\sum_{(m,n,k,\ell)\in\mathbb{N}^{4}}(\mathrm{e}_{n}\circ\mathrm{e}_{m}\circ\mathrm{e}_{\ell}'\circ \mathrm{e}_{k}'){T_{0}'}^{n}T_{0}^{m}{T_{0}'}^{\ell}T_{0}^{k}\\
                                                             &= &\sum_{(m,n,k,\ell)\in\mathbb{N}^{4}}(\mathrm{e}_{n}\circ\mathrm{e}_{\ell}'\circ\mathrm{e}_{m}\circ\mathrm{e}_{k}'){T_{0}'}^{n+\ell}T_{0}^{k+m}\\
                                                             &= &\sum_{(i,n,\ell)\in\mathbb{N}^{3}}(\mathrm{e}_{n}\circ\mathrm{e}_{\ell}'\circ\mathrm{e}_{i}''){T_{0}'}^{n+\ell}T_{0}^{i}\\
                                                             &= &\sum_{(i,j)\in\mathbb{N}^{2}}(\mathrm{e}_{j}''\circ\mathrm{e}_{i}''){T_{0}'}^{j}T_{0}^{i}\\
                                                             &=&(\mathrm{e}''\widehat{\otimes}\mathrm{id}_{\mathbb{Z}[T_{0}]})\circ\mathrm{e}'',
\end{array}
\end{equation*}
which shows that $\mathrm{e}''$ is a restricted exponential homomorphism. 
\end{proof}

\begin{prop}\label{prop:proj-lim-restricted} Let $(\mathcal{B}_n)_{n\in \mathbb{N}}$ be a countable inverse system of complete topological rings with continuous and surjective transition homomorphisms $p_{m,n}\colon \mathcal{B}_m\rightarrow \mathcal{B}_n$  for every $m\geq n\geq 0$. Let $\mathcal{B}$ be its  limit and let $p_n\colon \mathcal{B}\rightarrow \mathcal{B}_n$, $n\in \mathbb{N}$, be the canonical continuous projections. Let $\mathrm{e}_n\colon \mathcal{B}_{n}\rightarrow  \mathcal{B}_{n}\{T\}$, $n\in \mathbb{N}$, be a collection of restricted exponential homomorphisms such that  $$\mathrm{e}_n\circ p_{m,n}=(p_{n,m}\widehat{\otimes}\mathrm{id}_{\mathbb{Z}[T]})\circ\mathrm{e}_n \qquad \forall m\geq n\geq 0.$$ Then there exists a unique restricted exponential homomorphism $\underline{\mathrm{e}}=\varprojlim_{n\in \mathbb{N}}\mathrm{e}_n\colon  \mathcal{B} \rightarrow \mathcal{B}\{T\}$ such that $\mathrm{e}_n\circ p_{n}=(p_{n}\widehat{\otimes}\mathrm{id}_{\mathbb{Z}[T]})\circ\underline{\mathrm{e}}$ for every $n\in \mathbb{N}$.
\end{prop}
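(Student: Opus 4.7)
The plan is to construct $\underline{\mathrm{e}}$ by a purely limit-theoretic argument, using the identification $\mathcal{B}\{T\}\cong\varprojlim_{n\in\mathbb{N}}\mathcal{B}_n\{T\}$ provided by Lemma~\ref{lem:proj-lim-restricted}, and then verify the coaction axioms componentwise. Since $\mathcal{B}\{T\}$ is separated, all identities needed to check Definition~\ref{def:Restricted-Exp-Map} can be tested after composition with the canonical projections onto each $\mathcal{B}_n\{T\}$.

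First I would set $q_n=p_n\widehat{\otimes}\mathrm{id}_{\mathbb{Z}[T]}\colon\mathcal{B}\{T\}\rightarrow\mathcal{B}_n\{T\}$ and $q_{m,n}=p_{m,n}\widehat{\otimes}\mathrm{id}_{\mathbb{Z}[T]}\colon\mathcal{B}_m\{T\}\rightarrow\mathcal{B}_n\{T\}$. By Lemma~\ref{lem:proj-lim-restricted} applied to the inverse system $(\mathcal{B}_n)_{n\in\mathbb{N}}$ (whose transition homomorphisms are assumed surjective and continuous, so that Lemma~\ref{lem:inverselim-top-grp} ensures $\mathcal{B}$ is complete and $q_n$ makes $\mathcal{B}\{T\}$ the limit of the $\mathcal{B}_n\{T\}$), the family of continuous homomorphisms $(\mathrm{e}_n\circ p_n)_{n\in\mathbb{N}}\colon\mathcal{B}\rightarrow\mathcal{B}_n\{T\}$ is compatible with the transitions $q_{m,n}$ by the given hypothesis $\mathrm{e}_n\circ p_{m,n}=q_{m,n}\circ\mathrm{e}_m$. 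The universal property of the inverse limit topology therefore furnishes a unique homomorphism of topological rings $\underline{\mathrm{e}}\colon\mathcal{B}\rightarrow\mathcal{B}\{T\}$ satisfying $q_n\circ\underline{\mathrm{e}}=\mathrm{e}_n\circ p_n$ for every $n\in\mathbb{N}$, and this identity already gives the asserted uniqueness.

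It then remains to verify the two coaction diagrams of Definition~\ref{def:Restricted-Exp-Map} for $\underline{\mathrm{e}}$. For the counit diagram, composing $(\mathrm{id}_{\mathcal{B}}\widehat{\otimes}\epsilon)\circ\underline{\mathrm{e}}$ with $p_n$ gives $(\mathrm{id}_{\mathcal{B}_n}\widehat{\otimes}\epsilon)\circ\mathrm{e}_n\circ p_n=p_n$ by the counit axiom for $\mathrm{e}_n$, and since the $p_n$ separate points of $\mathcal{B}$ (as $\mathcal{B}$ is the separated limit of the $\mathcal{B}_n$), this forces $(\mathrm{id}_{\mathcal{B}}\widehat{\otimes}\epsilon)\circ\underline{\mathrm{e}}=\mathrm{id}_{\mathcal{B}}$. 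For the coassociativity diagram, I would apply Lemma~\ref{lem:proj-lim-restricted} once more, this time in two variables, to identify $\mathcal{B}\{T',T\}$ with $\varprojlim_{n\in\mathbb{N}}\mathcal{B}_n\{T',T\}$, and check that the two compositions $(\underline{\mathrm{e}}\widehat{\otimes}\mathrm{id}_{\mathbb{Z}[T]})\circ\underline{\mathrm{e}}$ and $(\mathrm{id}_{\mathcal{B}}\widehat{\otimes}m)\circ\underline{\mathrm{e}}$ project onto the same map $\mathcal{B}\rightarrow\mathcal{B}_n\{T',T\}$ for every $n$: both coincide with $(\mathrm{e}_n\widehat{\otimes}\mathrm{id}_{\mathbb{Z}[T]})\circ\mathrm{e}_n\circ p_n=(\mathrm{id}_{\mathcal{B}_n}\widehat{\otimes}m)\circ\mathrm{e}_n\circ p_n$ by the coassociativity axiom for $\mathrm{e}_n$ combined with the compatibility $q_n\circ\underline{\mathrm{e}}=\mathrm{e}_n\circ p_n$. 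Separatedness of $\mathcal{B}\{T',T\}$ concludes the proof.

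No step should present a genuine obstacle; the only point requiring a little care is the bookkeeping around the two-variable identification $\mathcal{B}\{T',T\}\cong\varprojlim_{n\in\mathbb{N}}\mathcal{B}_n\{T',T\}$, which must be handled so that the map $(\underline{\mathrm{e}}\widehat{\otimes}\mathrm{id}_{\mathbb{Z}[T]})\circ\underline{\mathrm{e}}$ really does decompose componentwise as $(\mathrm{e}_n\widehat{\otimes}\mathrm{id}_{\mathbb{Z}[T]})\circ\mathrm{e}_n\circ p_n$ after projection; but this is a direct consequence of the functoriality of the completed tensor product with respect to inverse limits and of the compatibilities already in place.
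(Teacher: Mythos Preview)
Your proposal is correct and follows essentially the same approach as the paper's own proof: construct $\underline{\mathrm{e}}$ via the universal property of the inverse limit using the identification $\mathcal{B}\{T\}\cong\varprojlim_{n}\mathcal{B}_n\{T\}$ from Lemma~\ref{lem:proj-lim-restricted}, then verify the two coaction axioms componentwise. Your write-up is simply more explicit than the paper's, which dispatches both axioms in a single sentence each; in particular your care with the two-variable identification $\mathcal{B}\{T',T\}\cong\varprojlim_{n}\mathcal{B}_n\{T',T\}$ is exactly the bookkeeping the paper leaves implicit.
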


\begin{proof} The hypothesis combined with Lemma  \ref{lem:proj-lim-restricted} implies the existence of a unique homomorphism of topological rings $$\underline{\mathrm{e}}=\varprojlim_{n\in \mathbb{N}}\mathrm{e}_n\colon \mathcal{B}=\varprojlim_{n\in \mathbb{N}} \mathcal{B}_n\rightarrow \varprojlim_{n\in \mathbb{N}} \mathcal{B}_n\{T\}\cong \mathcal{B}\{T\}$$ 
such that $\mathrm{e}_n\circ p_n=(p_n\widehat{\otimes}\mathrm{id}_{\mathbb{Z}[T]})\circ \underline{\mathrm{e}}$ for every $n\in \mathbb{N}$. The equality $\mathrm{e}_{n,0}=\mathrm{id}_{\mathcal{B}}$ for every $n\in \mathbb{N}$ implies that $\underline{\mathrm{e}}_0=\mathrm{id}_{\mathcal{B}}$. Similarly, since $(\mathrm{id}_{\mathcal{B}}\widehat{\otimes}m)\circ \mathrm{e}_n=(\mathrm{e}_n\widehat{\otimes} \mathrm{id}_{\mathrm{Z}[T]})\circ \mathrm{e}_n$ for every $n\in \mathbb{N}$, it follows that $(\mathrm{id}_{\mathcal{B}}\widehat{\otimes}m)\circ \underline{\mathrm{e}}=(\underline{\mathrm{e}}\widehat{\otimes} \mathrm{id}_{\mathrm{Z}[T]})\circ \underline{\mathrm{e}}$, showing that $\underline{\mathrm{e}}$ is a restricted exponential homomorphism.
\end{proof}

\subsection{Restricted exponential homomorphisms and automorphisms}

\begin{prop} Let $\mathcal{B}$ be a complete topological ring and let $\mathrm{e}\colon \mathcal{B}\rightarrow \mathcal{B}\{T\}$ be a restricted exponential homomorphism. Then for every automorphism of topological ring $\alpha$ of $\mathcal{B}$, the composition 
$$ {}^\alpha\!\mathrm{e}:= (\alpha \widehat{\otimes} \mathrm{id}_{\mathbb{Z}[T]})\circ\mathrm{e}\circ\alpha^{-1}\colon  \mathcal{B}\rightarrow \mathcal{B}\{T\}$$ 
is a restricted exponential homomorphism. 
\end{prop}

\begin{proof} It is clear that ${}^\alpha\!\mathrm{e}$ is homomorphism of topological rings. By definition, we have $${}^{\alpha}\!\mathrm{e}=\sum_{i\in \mathbb{N}} {}^{\alpha}\!\mathrm{e}_i T^i=\sum_{i\in \mathbb{N}} (\alpha\circ \mathrm{e}_i\circ \alpha^{-1}) T^i.$$ 
Since $\mathrm{e}_0=\mathrm{id}_{\mathcal{B}}$, we have  ${}^{\alpha}\!\mathrm{e}_0=\mathrm{id}_{\mathcal{B}}$ showing that commutativity of the right hand side diagram of Definition \ref{def:Restricted-Exp-Map} holds for ${}^{\alpha}\!\mathrm{e}(b)$. On the other hand, applying Equation \eqref{eq:rest-exp-condition}, we have

\begin{align*}
\begin{array}{rcl}
(\mathrm{id}_{\mathcal{B}}\widehat{\otimes}m)\circ {}^{\alpha}\!\mathrm{e} &	= &\sum_{\ell\in\mathbb{N}}(\alpha\circ \mathrm{e}_{\ell}\circ \alpha^{-1})(T+T')^{\ell} \\
	& =&\sum_{(i,j)\in\mathbb{N}^{2}}(\alpha\circ \mathrm{e}_{i}\circ \mathrm{e}_{j}\circ \alpha^{-1}){T'}^{j}T^{i} \\
	&=&\sum_{(i,j)\in\mathbb{N}^{2}}\big((\alpha\circ \mathrm{e}_{i}\circ \alpha^{-1})\circ (\alpha\circ \mathrm{e}_{j}\circ \alpha^{-1})\big){T'}^{j}T^{i} \\
	&=&({}^{\alpha}\!\mathrm{e}\widehat{\otimes}\mathrm{id}_{\mathbb{Z}[T]})\circ {}^{\alpha}\!\mathrm{e},
\end{array}
\end{align*}
which shows that ${}^{\alpha}\!\mathrm{e}$ is a restricted exponential homomorphism. 
\end{proof}

\begin{prop} \label{lem:restricted-eval1-auto} Let $\mathcal{B}$ be a complete topological ring and let $\mathrm{e}\colon \mathcal{B}\rightarrow \mathcal{B}\{T\}$ be a restricted exponential homomorphism. Then the compositions $$\varphi=\pi_{(1)}\circ \mathrm{e}\colon  \mathcal{B} \stackrel{\mathrm{e}}{\rightarrow} \mathcal{B}\{T\} \stackrel{T\mapsto 1}{\longrightarrow} \mathcal{B} \quad \textrm{and} \quad \psi=\pi_{(-1)}\circ \mathrm{e}\colon  \mathcal{B} \stackrel{\mathrm{e}}{\rightarrow} \mathcal{B}\{T\} \stackrel{T\mapsto -1}{\longrightarrow} \mathcal{B}$$ are  topological ring automorphisms of $\mathcal{B}$ inverse to each other.
\end{prop}
\begin{proof}
The homomorphisms $\varphi$ and $\psi$ are clearly continuous. 
Note that by definition, $\psi=\pi_{(1)}\circ\mathrm{e}_{\lambda(-1)}$. 
Furthermore, letting $\pi_{(1,1)}=\pi_{(1)}\circ\Delta\colon \mathcal{B}\{T,T'\}\rightarrow\mathcal{B}$ be the unique continuous $\mathcal{B}$-algebra homomorphism that maps $T$ and $T'$ to $1$, we have $$\psi\circ\varphi=\pi_{(1,1)}\circ(\mathrm{e}_{\lambda(-1)}\widehat{\otimes}\mathrm{id}_{\mathbb{Z}[T]})\circ \mathrm{e}\quad\textrm{and}\quad\varphi\circ\psi=\pi_{(1,1)}\circ(\mathrm{e}\widehat{\otimes}\mathrm{id}_{\mathbb{Z}[T']})\circ\mathrm{e}_{\lambda(-1)}.$$
Let $f\colon \mathcal{B}\rightarrow\mathcal{B}\{T,T'\}$ be the composition of $\mathrm{e}$ with the unique continuous $\mathcal{B}$-algebra homomorphism $\mathcal{B}\{T\}\rightarrow\mathcal{B}\{T,T'\}$ that maps $T$ to $T-T'$. Since $\mathrm{e}$ is a restricted exponential homomorphism, Equation \eqref{eq:rest-exp-condition} implies that $$(\mathrm{e}_{\lambda(-1)}\widehat{\otimes}\mathrm{id}_{\mathbb{Z}[T]})\circ \mathrm{e}=f=(\mathrm{e}\widehat{\otimes}\mathrm{id}_{\mathbb{Z}[T']})\circ\mathrm{e}_{\lambda(-1)}.$$ 
Since $\pi_{(1,1)}\circ f=\mathrm{id}_{\mathcal{B}}$, the assertion follows.
\end{proof}

\subsection{Topologically integrable derivations setting}\label{subsec:top-der-summary}
For the convenience of the reader, we now briefly translate, when the base topological ring $\mathcal{A}$ contains $\mathbb{Q}$, some of the main basic properties of restricted exponential homomorphisms established in subsection \ref{sub:exp-basic} in the language of topologically integrable derivations 
We first observe that the ring of invariants $\mathcal{B}^{\mathrm{e}}$ of the restricted exponential homomorphism associated to topologically integrable $\mathcal{A}$-derivation $\partial$ of $\mathcal{B}$ is equal to the kernel of $\partial$.

\begin{prop} Let $\mathcal{A}$ be a topological ring containing $\mathbb{Q}$, let $\mathcal{B}$ be a complete topological $\mathcal{A}$-algebra  and let $\partial$ be a topologically integrable $\mathcal{A}$-derivation of $\mathcal{B}$. Then the following hold :

a) For every element $f\in \operatorname{Ker}(\partial)$, the $\mathcal{A}$-derivation $f\partial$ of $\mathcal{B}$ is topologically integrable.

b) For every multiplicatively closed subset $S$ of $\operatorname{Ker}(\partial)$, the $\mathcal{A}$-derivation $\widehat{S^{-1}\partial}$ of $\widehat{S^{-1}\mathcal{B}}$ is topologically integrable.

c) For every surjective open homomorphism of complete topological rings $\pi\colon \mathcal{B}\rightarrow \mathcal{C}$ such that $\partial(\operatorname{Ker}\pi)\subset \operatorname{Ker}\pi$, the induced $\mathcal{A}$-derivation $\overline{\partial}$ of $\mathcal{C}\cong \mathcal{B}/\operatorname{Ker}\pi$ is topologically integrable.

d) For every topologically integrable $\mathcal{A}$-derivation $\partial'$ of $\mathcal{B}$ such $\partial\circ \partial'=\partial'\circ \partial$, the $\mathcal{A}$-derivation $\partial''=\partial+\partial'$ of $\mathcal{B}$ is topologically integrable. 
\end{prop}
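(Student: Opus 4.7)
The plan is to pass via Theorem~\ref{top-int-1-1-exp} from each topologically integrable derivation $\partial$ to its restricted exponential homomorphism $\mathrm{e}=\exp(T\partial)=\sum_{i\in \mathbb{N}}\tfrac{1}{i!}\partial^{i}T^{i}$, apply a previously established construction on restricted exponentials to obtain a new restricted exponential $\mathrm{e}^{\sharp}$, and conclude by reading off the new topologically integrable derivation as the coefficient of $T$ in $\mathrm{e}^{\sharp}$. Since $\mathbb{Q}\subset\mathcal{A}$, the correspondence is already bijective at the level of ordinary derivations, and $\ker\partial$ coincides with the ring $\mathcal{B}^{\mathrm{e}}$ of $\mathrm{e}$-invariants, so every candidate derivation in (a)--(d) will automatically be the one determined by its associated restricted exponential.

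For (a), since $f\in \ker\partial=\mathcal{B}^{\mathrm{e}}$, Proposition~\ref{lem:replicat} yields the restricted exponential $\mathrm{e}_{\lambda(f)}=\lambda(f)\circ \mathrm{e}=\sum_{i\in\mathbb{N}}\tfrac{f^{i}}{i!}\partial^{i}T^{i}$, whose coefficient of $T$ is $f\partial$. For (b), Proposition~\ref{prop:Localization-Kernel} applied to $S\subset \mathcal{B}^{\mathrm{e}}$ produces a restricted exponential $\widehat{S^{-1}\mathrm{e}}$ on $\widehat{S^{-1}\mathcal{B}}$, and by the explicit construction given in that proof its $T$-coefficient is the separated completed localization $\widehat{S^{-1}\partial}$ of Definition~\ref{def:separated-local-mod}. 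For (d), the hypothesis $\partial\circ\partial'=\partial'\circ\partial$ gives $\partial^{i}\circ(\partial')^{j}=(\partial')^{j}\circ\partial^{i}$ for all $i,j$, and a direct expansion shows
\[
((\mathrm{e}'\widehat{\otimes}\mathrm{id})\circ \mathrm{e})(b)=\sum_{(i,j)\in \mathbb{N}^{2}}\tfrac{1}{i!\,j!}(\partial')^{j}\partial^{i}(b)\,{T'}^{j}T^{i}=((\mathrm{e}\widehat{\otimes}\mathrm{id})\circ \mathrm{e}')(b),
\]
verifying the hypothesis of Proposition~\ref{lem:sum-restr-exp}. The resulting $\mathrm{e}''=\Delta\circ(\mathrm{e}\widehat{\otimes}\mathrm{id})\circ \mathrm{e}'$ then has $T''$-coefficient equal to $\partial+\partial'$, which is therefore topologically integrable.

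The only step requiring genuine care is (c). The natural approach is Proposition~\ref{prop:induced-quotient-exp}, whose hypothesis is the literal inclusion $\mathrm{e}(I)\subset i_{0}(I)\mathcal{B}\{T\}$ for $I=\ker\pi$. The $\partial$-stability of $I$ ensures $\pi_{T}(\mathrm{e}(b))=0$ for every $b\in I$, i.e.\ $\mathrm{e}(I)\subset \overline{i_{0}(I)\mathcal{B}\{T\}}$, but a priori only up to closure---this is the point I expect to be the main obstacle, since $i_{0}(I)\mathcal{B}\{T\}$ need not be closed. I would bridge the gap in either of two ways: by observing that the construction in the proof of \ref{prop:induced-quotient-exp} still goes through with $\mathrm{e}(I)$ lying in the closure (as one passes to a separated completion anyway), or, more transparently, by checking topological integrability of $\overline\partial$ directly from Definition~\ref{def:Pro-LND}: given $c\in \mathcal{C}$ and an open ideal $\mathfrak{c}\subset \mathcal{C}$, pick a preimage $b\in \pi^{-1}(c)$, apply topological integrability of $\partial$ to $b$ with target the open ideal $\pi^{-1}(\mathfrak{c})\subset \mathcal{B}$ to obtain an open subgroup $H\subset \mathcal{B}$ and an index $n_{0}$ with $\partial^{n}(b+H)\subset \pi^{-1}(\mathfrak{c})$ for every $n\geq n_{0}$, and push forward through the open surjection $\pi$ to conclude that $\overline\partial^{n}(c+\pi(H))\subset \mathfrak{c}$ for every $n\geq n_{0}$.
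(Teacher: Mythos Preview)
Your approach matches the paper's exactly: the paper's entire proof is the one-line statement that (a)--(d) follow from Propositions~\ref{lem:replicat}, \ref{prop:Localization-Kernel}, \ref{prop:induced-quotient-exp} and \ref{lem:sum-restr-exp} respectively, via the correspondence of Theorem~\ref{top-int-1-1-exp}.

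Your extra care in (c) is actually warranted and goes beyond what the paper does. The paper simply cites Proposition~\ref{prop:induced-quotient-exp}, but as you observe, the literal hypothesis $\mathrm{e}(I)\subset i_{0}(I)\mathcal{B}\{T\}$ is not obviously satisfied: from $\partial(I)\subset I$ one only gets that each coefficient $\tfrac{1}{i!}\partial^{i}(b)$ lies in $I$, hence $\mathrm{e}(b)\in I\{T\}$, which in general is the closure of $i_{0}(I)\mathcal{B}\{T\}$ rather than the ideal itself. Both of your proposed fixes are valid: the first works because the map $\pi_{T}\colon \mathcal{B}\{T\}\to \mathcal{C}\{T\}$ is continuous with separated target, so its kernel is closed and contains $\overline{i_{0}(I)\mathcal{B}\{T\}}$; the second is a clean direct verification of Definition~\ref{def:Pro-LND} using that $\pi$ is open and surjective. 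Either one cleanly closes a gap that the paper glosses over.
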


\begin{proof} The assertions follow respectively from Propositions \ref{lem:replicat}, \ref{prop:Localization-Kernel}, \ref{prop:induced-quotient-exp} and \ref{lem:sum-restr-exp}
\end{proof}

\begin{prop} \label{prop:proj-lim-Der}Let $\mathcal{A}$ be a topological ring containing $\mathbb{Q}$ and let $(\mathcal{B}_n)_{n\in \mathbb{N}}$ be a countable inverse system of complete topological $\mathcal{A}$-algebras with continuous and surjective transition homomorphisms $p_{m,n}\colon \mathcal{B}_m\rightarrow \mathcal{B}_n$ for every $m\geq n\geq 0$. Let $\mathcal{B}$ be its limit and let $p_n\colon \mathcal{B}\rightarrow \mathcal{B}_n$, $n\in \mathbb{N}$, be the canonical continuous projections. Let $\partial_n\colon \mathcal{B}_n\rightarrow \mathcal{B}_n$, $n\in \mathbb{N}$, be a sequence of topologically integrable $\mathcal{A}$-derivations such that $\partial_n\circ p_{m,n}=p_{m,n}\circ \partial_m$ for all $m\geq n\geq 0$. Then there exists a unique topologically integrable $\mathcal{A}$-derivation $\partial=\varprojlim_{n\in \mathbb{N}} \partial_n$ of $\mathcal{B}$ such that  $\partial_n\circ p_n=p_n \circ \partial$ for every $n\in \mathbb{N}$.
\end{prop}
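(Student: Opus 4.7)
The plan is to use Theorem~\ref{top-int-1-1-exp} to transport the statement into the language of restricted exponential homomorphisms, where Proposition~\ref{prop:proj-lim-restricted} provides exactly the projective-limit construction we need. Since $\mathcal{A}$ contains $\mathbb{Q}$, each topologically integrable $\mathcal{A}$-derivation $\partial_n$ of $\mathcal{B}_n$ corresponds bijectively to the restricted exponential $\mathcal{A}$-homomorphism $\mathrm{e}_n=\exp(T\partial_n)\colon \mathcal{B}_n\rightarrow \mathcal{B}_n\{T\}$, and $\partial_n$ is recovered as the coefficient of $T$ in $\mathrm{e}_n$, namely $\partial_n=\tfrac{\partial}{\partial T}|_{T=0}\circ \mathrm{e}_n$.

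First I would promote the hypothesis $\partial_n\circ p_{m,n}=p_{m,n}\circ \partial_m$ to the analogous relation for all iterates: an immediate induction on $i$ gives $\partial_n^{i}\circ p_{m,n}=p_{m,n}\circ \partial_m^{i}$ for every $i\geq 0$ and every $m\geq n\geq 0$. Dividing by $i!$ and summing against $T^i$ (which is legitimate since $(p_{m,n}\widehat{\otimes}\mathrm{id}_{\mathbb{Z}[T]})$ is continuous and sends the sum coefficient-by-coefficient, while the families involved converge to $0$ by topological integrability), one obtains
\[
\mathrm{e}_n\circ p_{m,n}=(p_{m,n}\widehat{\otimes}\mathrm{id}_{\mathbb{Z}[T]})\circ \mathrm{e}_m\quad \text{for all } m\geq n\geq 0.
\]
This is exactly the compatibility required in Proposition~\ref{prop:proj-lim-restricted}, which then yields a unique restricted exponential $\mathcal{A}$-homomorphism $\underline{\mathrm{e}}=\varprojlim_{n\in \mathbb{N}}\mathrm{e}_n\colon \mathcal{B}\rightarrow \mathcal{B}\{T\}$ satisfying $\mathrm{e}_n\circ p_n=(p_n\widehat{\otimes}\mathrm{id}_{\mathbb{Z}[T]})\circ \underline{\mathrm{e}}$ for every $n$.

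Applying Theorem~\ref{top-int-1-1-exp} in the reverse direction produces a unique topologically integrable $\mathcal{A}$-derivation $\partial=\tfrac{\partial}{\partial T}|_{T=0}\circ \underline{\mathrm{e}}$ of $\mathcal{B}$. Extracting the coefficient of $T$ in the identity $\mathrm{e}_n\circ p_n=(p_n\widehat{\otimes}\mathrm{id}_{\mathbb{Z}[T]})\circ \underline{\mathrm{e}}$ gives $\partial_n\circ p_n=p_n\circ \partial$ for every $n$, as required. For uniqueness, any other topologically integrable $\mathcal{A}$-derivation $\partial'$ of $\mathcal{B}$ with $\partial_n\circ p_n=p_n\circ \partial'$ has associated restricted exponential homomorphism $\mathrm{e}'=\exp(T\partial')$ that satisfies the same intertwining $\mathrm{e}_n\circ p_n=(p_n\widehat{\otimes}\mathrm{id}_{\mathbb{Z}[T]})\circ \mathrm{e}'$; the uniqueness clause of Proposition~\ref{prop:proj-lim-restricted} forces $\mathrm{e}'=\underline{\mathrm{e}}$, and therefore $\partial'=\partial$.

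The only step with any content is the passage from the intertwining of derivations to that of their restricted exponential homomorphisms; once this is in hand, everything reduces to quoting Proposition~\ref{prop:proj-lim-restricted} and Theorem~\ref{top-int-1-1-exp}. I do not anticipate a genuine obstacle, since the iteration of the commutation relation and the continuity of $p_{m,n}\widehat{\otimes}\mathrm{id}_{\mathbb{Z}[T]}$ (together with Lemma~\ref{lem:cc-converge-to-restricted-map}, which ensures the exponential series is assembled continuously from its coefficients) handle the argument cleanly.
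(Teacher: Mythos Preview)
Your proposal is correct and follows essentially the same approach as the paper, which simply records the statement as a consequence of Proposition~\ref{prop:proj-lim-restricted} via the dictionary of Theorem~\ref{top-int-1-1-exp}. You have spelled out in more detail the one nontrivial step the paper leaves implicit, namely that the compatibility $\partial_n\circ p_{m,n}=p_{m,n}\circ\partial_m$ propagates to the associated restricted exponential homomorphisms.
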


\begin{proof}
 The result is a consequence of Proposition \ref{lem:proj-lim-restricted} in the Appendix.
\end{proof}

\section{Sliced restricted exponential homomorphisms}\label{sec:4}
In this section, we provide structure results for exponential homomorphisms admitting local slices in a form which generalizes the classical description  \cite[Section 1.5]{M78} for discrete topological rings. 
\begin{defn}
Let $\mathcal{B}$ be a complete topological ring and let $\mathrm{e}\colon \mathcal{B}\rightarrow\mathcal{B}\{T\}$ be a restricted
exponential homomorphism. A \emph{local slice} for $\mathrm{e}$ is an element $s\in\mathcal{B}$
such that $\mathrm{e}(s)\in\mathcal{B}\{T\}$ is a non-constant polynomial of minimal possible degree. A \emph{slice} is a local slice $s$ such that  $\mathrm{e}(s)$ is a monic polynomial. 
\end{defn}
By definition, a restricted exponential homomorphism $\mathrm{e}:\mathcal{B}\to \mathcal{B}\{T\}$ of a complete topological ring $\mathcal{B}$ admits a local slice if and only if the image of $\mathrm{e}$ contains a non-constant polynomial. This holds in particular whenever the topology on $\mathcal{B}$ is the discrete one and $\mathrm{e}$ is not equal to the trivial exponential homomorphism $i_0$. For instance, for $\mathcal{B}=\mathbb{C}[u]$ endowed with the discrete topology, the restricted exponential homomorphism $\mathrm{e}\colon \mathbb{C}[u]\rightarrow \mathbb{C}[u][T]$ defined by $P(u)\mapsto P(u+T)$ has the element $s=u$ as a slice.  In contrast, the next example illustrates that when the topology on  $\mathcal{B}$ is not the discrete one, local slices do not exist in general.

\begin{example} \label{ex:no-slice-powerseries}   Letting $\mathcal{B}=\mathbb{C}[[u]]$ endowed with the $u$-adic topology, the restricted exponential homomorphism $$\mathrm{e}\colon \mathbb{C}[[u]]\rightarrow \mathbb{C}[[u]]\{T\}, \quad  u\mapsto u\sum_{i \in \mathbb{N}}(uT)^{i}=\sum_{i \in \mathbb{N}}u^{i+1}T^{i}$$ of Example \ref{ex:non-fc-kernel} does not admit a local slice. Indeed, if such a local slice existed then it would be an element of $\operatorname{Ker}\partial^2\setminus \operatorname{Ker}\partial$, where $\partial$ is the topologically integrable $\mathbb{C}$-derivation $u^2\tfrac{\partial}{\partial u}$ of $\mathbb{C}[[u]]$ 
corresponding to $\mathrm{e}$. But here, one has $\operatorname{Ker}\partial^n=\operatorname{Ker}\partial=\mathbb{C}$ for every $n\geq 1$, so that, in particular, $ \operatorname{Ker}\partial^2\setminus \operatorname{Ker}\partial=\emptyset$.
\end{example}
\begin{lem}\label{lem:leading-term-invariant-regular} Let $\mathcal{B}$ be a complete topological ring, let $\mathrm{e}\colon \mathcal{B}\rightarrow \mathcal{B}\{T\}$ be a restricted exponential homomorphism and let $\sigma$ be an element of $\mathcal{B}$ such that $\mathrm{e}(\sigma)\in \mathcal{B}\{T\}$ is a non-constant polynomial. Then the following hold:

(a) The leading coefficient of $\mathrm{e}(\sigma)$ is an element of $\mathcal{B}^{\mathrm{e}}$.

(b) If the leading coefficient $\mathrm{e}(\sigma)$ is invertible in $\mathcal{B}$ then $\sigma$ is a regular element of $\mathcal{B}$.
\end{lem}

\begin{proof}  
By applying $\mathrm{e}\widehat{\otimes}\mathrm{id}_{\mathbb{Z}[T]}$ to the identity $\mathrm{e}(\sigma)=\sum_{i=0}^m \mathrm{e}_i(\sigma)T^i$, we obtain the equality
\begin{align*}
\sum_{i=0}^{m}\mathrm{e}_i(\sigma)(T+T')^i  = ((\mathrm{id}_{\mathcal{B}}\widehat{\otimes} m)\circ \mathrm{e})(\sigma) =  ((\mathrm{e}\widehat{\otimes}\mathrm{id}_{\mathbb{Z}[T]})\circ \mathrm{e})(\sigma)  =  \sum_{i=0}^m \mathrm{e}(\mathrm{e}_i(\sigma)) T^i 
\end{align*}
in $\mathcal{B}\{T,T'\}$. By comparing the terms in $T^m$, we infer that $\mathrm{e}(\mathrm{e}_m(\sigma))=\mathrm{e}_m(\sigma)$. So, $\mathrm e_m(\sigma)\in \mathcal B^{\mathrm e}$, which proves (a).

Now assume that $\mathrm{e}_m(\sigma)$ is invertible in $\mathcal{B}$ and let  $b\in \mathcal{B}$ be an element such that $\sigma b=0$. Then we have $$0=\mathrm{e}(\sigma)\mathrm{e}(b)=(\sum_{i=0}^m\mathrm{e}_i(\sigma)T^i)(\mathrm{e}(b))=\mathrm{e}_m(\sigma)\mathrm{e}(b)T^m +\sum_{i=0}^{m-1} \mathrm{e}_i(\sigma)\mathrm{e}(b)T^i $$
from which we infer that for every $j\geq 0$, $\mathrm{e}_j(b)=-(\mathrm{e}_m(\sigma))^{-1}(\sum_{i=0}^{m-1}\mathrm{e}_i(\sigma)\mathrm{e}_{m+j-i}(b))$, hence, by induction, that for every integer $\ell >0$, $b=\mathrm{e}_0(b)$ belongs to the ideal of $\mathcal{B}$ generated by the elements $\mathrm{e}_n(b)$, $n\geq \ell$. Since $\mathrm{e}(b)\in \mathcal{B}\{T\}$, the sequence $(\mathrm{e}_n(b))_{n\in\mathbb{N}}$ converges to $0$ in $\mathcal{B}$, which implies, since $\mathcal{B}$ is complete, that $b=0$, hence that $\sigma$ is a regular element of $\mathcal{B}$. 
\end{proof}

\subsection{Restricted exponential homomorphisms with a slice}
We now describe the structure of complete topological rings $\mathcal{B}$ with a restricted exponential homomorphism $\mathrm{e}:\mathcal{B}\to \mathcal{B}\{T\}$ admitting a slice.

\begin{thm}\label{Theorem: Slice} 
\label{prop:Struct-Slice}\label{cor:slice} Let $\mathcal{B}$ be a complete topological ring, let $\mathrm{e}\colon \mathcal{B}\rightarrow\mathcal{B}\{T\}$ be a restricted
exponential homomorphism and let $s$ be a slice for $\mathrm{e}$. Then the homomorphism of topological $\mathcal{B}^{\mathrm{e}}$-algebras $$\theta:\mathcal{B}^{\mathrm{e}}\{X\}\to \mathcal{B},\quad X\mapsto s$$ 
is an isomorphism. Moreover, letting $\mathrm{e}_0:\mathcal{B}^{\mathrm{e}}\{X\}\to \mathcal{B}^{\mathrm{e}}\{X\}\{T\}$ be the exponential homomorphism of topological $\mathcal{B}^{\mathrm{e}}$-algebras defined by $X\mapsto (\theta^{-1} \hat{\otimes} \mathrm{id}_{\mathbb{Z}[T]})(\mathrm{e}(s))$,  
the following diagram of topological $\mathcal{B}^{\mathrm{e}}$-algebras is commutative \[\xymatrix{ \mathcal{B}^{\mathrm{e}}\{X\} \ar[r]^{\theta} \ar[d]_{\mathrm{e}_0} & \mathcal{B} \ar[d]^{\mathrm{e}} \\ \mathcal{B}^{\mathrm{e}}\{X\}\{T\} \ar[r]^-{\theta \hat{\otimes} \mathrm{id}} & \mathcal{B}\{T\}}.\]
\end{thm}

The rest of this subsection is devoted to the proof of Theorem \ref{prop:Struct-Slice}. We begin with the following lemma:

\begin{lem} \label{lem:Weier-div} Let $\mathcal{B}$ be a complete topological ring, let $\mathrm{e}\colon \mathcal{B}\rightarrow\mathcal{B}\{T\}$ be a restricted
exponential homomorphism and let $s$ be a slice for $\mathrm{e}$. Then for every $b\in \mathcal{B}$, there exists a unique pair consisting of an element $q\in \mathcal{B}\{T\}$ and a polynomial $r\in \mathcal{B}[T]$ of degree strictly less than $\deg_T(s)$ such that $\mathrm{e(b)}=q\mathrm{e}(s)+r$. Moreover, $r$ is a constant polynomial $r_0\in \mathcal{B}^{\mathrm{e}}$ and $q=\mathrm{e}(q(0))$. 
\end{lem}
\begin{proof}
 Write $\mathrm{e}(s)=\sum_{i=0}^{m} \mathrm{e}_i(s) T^i$, with $\mathrm{e}_m(s)=1$. Since $\mathrm{e}(s)\in \mathcal{B}\{T\}$ is a non-constant and monic, it follows from Weierstrass division theorem in $\mathcal{B}\{T\}$ (see e.g. \cite{Sal64}) that for every element $b\in \mathcal{B}$, there exist a unique element $q(T)\in \mathcal{B}\{T\}$ and a unique polynomial $r(T)\in \mathcal{B}\{T\}$ of degree $<m$ such that $\mathrm{e}(b)=q(T)\mathrm{e}(s)+r(T)$. Applying $\mathrm{e}\widehat{\otimes}\mathrm{id}_{\mathbb{Z}[T]}$ to the equality  $\mathrm{e}(b)=q(T)\mathrm{e}(s)+r(T)$, we get the identity 
\begin{align*}
(\mathrm{e}\widehat{\otimes}\mathrm{id}_{\mathbb{Z}[T]})(\mathrm{e}(b)) & = & q(T+T') (\mathrm{e}\widehat{\otimes}\mathrm{id}_{\mathbb{Z}[T]})(\mathrm{e}(s))+r(T+T') \\
& = &  (\mathrm{e}\widehat{\otimes}\mathrm{id}_{\mathbb{Z}[T]})(\mathrm{e}(s))(\mathrm{e}\widehat{\otimes}\mathrm{id}_{\mathbb{Z}[T]})(q(T))+(\mathrm{e}\widehat{\otimes}\mathrm{id}_{\mathbb{Z}[T]})(r(T))
\end{align*}
in $\mathcal{B}\{T,T'\}$ from which we infer the equality
$$(\mathrm{e}\widehat{\otimes}\mathrm{id}_{\mathbb{Z}[T]})(\mathrm{e}(s))\left(q(T+T')- (\mathrm{e}\widehat{\otimes}\mathrm{id}_{\mathbb{Z}[T]})(q(T))\right)=
(\mathrm{e}\widehat{\otimes}\mathrm{id}_{\mathbb{Z}[T]})(r(T))-r(T+T').$$
Since $(\mathrm{e}\widehat{\otimes}\mathrm{id}_{\mathbb{Z}[T]})(\mathrm{e}(s))$ is a monic polynomial of degree $m$ in $T$ over $\mathcal{B}\{T'\}$ whereas, on the other hand,  $r(T+T')-(\mathrm{e}\widehat{\otimes}\mathrm{id}_{\mathbb{Z}[T]})(r(T))$ is a polynomial of degree $<m$ in $T$ over $\mathcal{B}\{T'\}$, it follows that $q(T+T')=(\mathrm{e}\widehat{\otimes}\mathrm{id}_{\mathbb{Z}[T]})(q(T))$ and $r(T+T')=(\mathrm{e}\widehat{\otimes}\mathrm{id}_{\mathbb{Z}[T]})(r(T))$. By evaluating at $T=0$, we deduce in turn that 
 $q(T')=\mathrm{e}(q(0))$ and $r(T')=\mathrm{e}(r(0))$. Since $s$ is a slice and $r(T)$ is a polynomial of degree strictly less than that of $\mathrm{e}(s)$, the only possibility is that $r(T)$ is a constant polynomial, hence equal to $r(0)$, and that $r(0)=\mathrm{e}(r(0))$ is an element of $\mathcal{B}^{\mathrm{e}}$. 
\end{proof}

\begin{lem}\label{lem:Reynolds}With the assumption and notation of Lemma \ref{lem:Weier-div} above, the map $$R_s:\mathcal{B}\to \mathcal{B},\; b\mapsto r_0$$
is an idempotent endomorphism of topological $\mathcal{B}^{\mathrm{e}}$-algebras with image $\mathcal{B}^{\mathrm{e}}$. Moreover, the identity $R_s(R_s(b)b')=R_s(b)R_s(b')$ holds for every $b,b'\in \mathcal{B}$.
\end{lem}

\begin{proof} 
It is straightforward to verify that $R_s$ is a continuous endomorphism the underlying abelian topological group of $\mathcal{B}$. The fact that $R_s$ is homomorphism of rings follows on the other hand from the observation that for every $b,b'\in \mathcal{B}$, we have $$\mathrm{e}(b'b)=\mathrm{e}(b')\mathrm{e}(b)=(q'\mathrm{e}(s)+r_0')(q\mathrm{e}(s)+r_0)=q''\mathrm{e}(s)+r_0'r_0$$ 
which implies, by the uniqueness part of Lemma \ref{lem:Weier-div}, that 
$R_s(b'b)=r_0'r_0=R_s(b')R_s(b)$. 
The same argument also implies that $R_s$ is a homomorphism of $\mathcal{B}^{\mathrm{e}}$-algebras.  Since, by construction, the image of $R_s$ is contained in $\mathcal{B}^{\mathrm{e}}$,
it follows that $R_s$ is idempotent and that $$\mathrm{e}(R_s(b)b')=R_s(b)\mathrm{e}(b')=R_s(b)(q'\mathrm{e}(s)+R_s(b')),$$
 from which we infer, by the uniqueness part of Lemma \ref{lem:Weier-div} again that $R_s(R_s(b)b')=R_s(b)R_s(b')$.
\end{proof}

\begin{defn} \label{def:Dixmier} With the notation of Lemma \ref{lem:Weier-div} above, by analogy with \cite[1.1.9]{F06}, we call the  homomorphism  of topological rings $R_s:\mathcal{B}\to \mathcal{B}$ the \emph{Dixmier-Reynolds homomomorphism} associated to  $s$.
\end{defn}

\begin{proof}[{Proof of Theorem \ref{prop:Struct-Slice}}] 
 A repeated application of Lemma \ref{lem:Weier-div} gives by induction the existence of a unique sequence $(r_n)_{n\in \mathbb{N}}$ of elements of $\mathcal{B}^{e}$  such that $\mathrm{e}(b)=\sum_{n\in \mathbb{N}}r_n (\mathrm{e}(s))^n$. Since $\mathrm{e}(b)\in \mathcal{B}\{T\}$, the sequence of coefficients $(\mathrm{e}_i(b))_{i\in \mathbb{N}}$ of $\mathrm{e}(b)$ converges to $0$ with respect to the topology on $\mathcal{B}$. Since $\mathrm{e}(s)$ is monic polynomial, this holds if and only if the sequence  $(r_n)_{n\in \mathbb{N}}$ converges to $0$ in $\mathcal{B}$, hence in $\mathcal{B}^{\mathrm{e}}$ since $\mathcal{B}^{\mathrm{e}}$ is a complete subring of $\mathcal{B}$ by Proposition \ref{lem:kernel} (a). It follows that $\sum_{n\in \mathbb{N}} r_n X^n$ is a well-defined element of $\mathcal{B}^{\mathrm{e}}\{X\}$ whose image by $\theta$ equals $b=\mathrm{e}(b)(0)=\sum_{n\in \mathbb{N}}r_ns^n$, showing the surjectivity of $\theta$. 
 
 Suppose that $\theta$ is not injective and let $a=\sum_{i\in \mathbb{N}}a_{i}X^{i}\in\mathcal{B}^{\mathrm{e}}\{X\}$ be a nonzero element of minimal order $\mathrm{ord}_{0}(a)=\min\{i,\;a_{i}\neq0\}$ such that $\theta(a)=\sum_{i\in \mathbb{N}}a_{i}s^{i}=0$.  Since by Lemma \ref{lem:leading-term-invariant-regular} (b), $s$ is a regular element of $\mathcal{B}$, the minimality of $\mathrm{ord}_{0}(a)$
implies that $a_{0}\neq0$. On the other hand, since $R_s(s)=0$, 
we have 
\[
a_{0}=\sum_{i\in \mathbb{N}}a_{i}R_s(s)^{i}=R_s(\theta(a))=0,
\]
a contradiction. This shows that $\theta$ is injective, hence an isomorphism. The second assertion is a straighforward consequence of the construction. 
\end{proof}

\subsection{Restricted exponential homomorphism with a local slice}
We now consider a restricted exponential homomorphism $\mathrm{e}:\mathcal{B}\to \mathcal{B}\{T\}$ with a local slice $s\in \mathcal{B}$ which is not a slice. Writing $\mathrm{e}(s)=\sum_{i=0}^{m} \mathrm{e}_i(s)T^i$, $s_m:=\mathrm{e}_m(s)$ is a non-zero element of $\mathcal{B}^{\mathrm{e}}\setminus \{1\}$ by Lemma \ref{lem:leading-term-invariant-regular} (a).  
Let  $$\widetilde{j}_{s_{m}}\colon \mathcal{B}\rightarrow\widehat{\mathcal{B}_{s_{m}}}=\widehat{S^{-1} \mathcal{B}} \quad \textrm{and} \quad \bar{j}_{s_{m}}\colon \mathcal{B}^{\mathrm{e}}\rightarrow\widehat{\mathcal{B}_{s_{m}}^{\mathrm{e}}}=\widehat{S^{-1}\mathcal{B}^{\mathrm{e}}}$$
be the separated completed localization homomorphisms of $\mathcal{B}$ and
$\mathcal{B}^{\mathrm{e}}$ with respect to the multiplicative subset $S=\{s_{m}^{n}\}_{n \in \mathbb{N}}$ of $\mathcal{B}^{\mathrm{e}}\subseteq\mathcal{B}$.
Since $\mathcal{B}^{\mathrm{e}}$ is closed in $\mathcal{B}$ by Proposition \ref{lem:kernel}, the homomorphism of topological rings $\widehat{i}_{s_{m}}\colon \widehat{\mathcal{B}_{s_{m}}^{\mathrm{e}}}\rightarrow\widehat{\mathcal{B}_{s_{m}}}$ induced by the inclusion $i\colon \mathcal{B}^{\mathrm{e}}\hookrightarrow\mathcal{B}$
is injective by Lemma \ref{lem:Injectivity-Loc}. We henceforth consider
$\widehat{\mathcal{B}_{s_{m}}}$ as a $\widehat{\mathcal{B}_{s_{m}}^{\mathrm{e}}}$-algebra
via this homomorphism. By Proposition \ref{prop:Localization-Kernel},
there exists a unique restricted exponential homomorphism $\widehat{\mathrm{e}_{s_{m}}}:=\widehat{S^{-1}\mathrm{e}}\colon \widehat{\mathcal{B}_{s_{m}}}\rightarrow\widehat{\mathcal{B}_{s_{m}}}\{T\}$
such that the following diagram commutes
\[
\xymatrix{ \mathcal{B} \ar[r]^-{\mathrm{e}} \ar[d]_{\widetilde{j}_{s_{m}}} & \mathcal{B}\widehat{\otimes}_{\mathbb{Z}}\mathbb{Z}[T] \ar[d]^{\widetilde{j}_{s_{m}}\widehat{\otimes}\mathrm{id}} \\
\widehat{\mathcal{B}_{s_{m}}} \ar[r]^-{\widehat{\mathrm{e}_{s_{m}}}} & \widehat{\mathcal{B}_{s_{m}}}\widehat{\otimes}_{\mathbb{Z}}\mathbb{Z}[T].}
\] 

Now assume that $\widehat{\mathcal{B}_{s_{m}}}$ is not the zero ring and let $\sigma$ be the image of the element $s_{m}^{-1}s\in \mathcal{B}_{s_m}$ in $\widehat{\mathcal{B}_{s_{m}}}$ by the separated completion homomorphism $\mathcal{B}_{s_m}\to \widehat{\mathcal{B}_{s_m}}$. Since $\widehat{\mathcal{B}_{s_m}}\neq \{0\}$, the image $\tilde{j}_{s_m}(s_m)$ of $s_{m}$ in $\widehat{\mathcal{B}_{s_{m}}}$ is a nonzero invertible element (see Corollary \ref{cor:criterion-zero-loc}) and, by definition of $\widehat{\mathrm{e}_{s_m}}$, $$\widehat{\mathrm{e}_{s_{m}}}(\sigma)=\sum_{i=0}^m (\tilde{j}_{s_m}(s_m))^{-1}\tilde{j}_{s_m}(\mathrm{e}_i(s))T^i$$ 
is a monic polynomial of degree $m$. Theorem \ref{prop:Struct-Slice} then implies the following:

\begin{cor}     \label{lem:localized-sliced} With the notation above,  assume that $\widehat{\mathcal{B}_{s_{m}}}$ is not the zero ring and that $\sigma$ \emph{is a slice} for the restricted exponential homomorphism $\widehat{\mathrm{e}_{s_{m}}}\colon \widehat{\mathcal{B}_{s_{m}}}\rightarrow\widehat{\mathcal{B}_{s_{m}}}\{T\}$.  Then the homomorphism of topological $\widehat{\mathcal{B}^{\mathrm{e}}_{s_{m}}}$-algebras $\theta_{\sigma}:\widehat{\mathcal{B}^{\mathrm{e}}_{s_m}}\{X\}\to \widehat{\mathcal{B}_{s_{m}}}$, $\quad X\mapsto \sigma$ 
is an isomorphism.
\end{cor}  

We do not know in general whether in Corollary \ref{lem:localized-sliced} the condition that $\sigma$ is a slice can be relaxed.
More precisely, we do not know how to exclude the possibility that after passing to the separated completion $\mathcal{B}_{s_m}\to \widehat{\mathcal{B}_{s_m}}$, $\sigma$ is not a local slice for  
$\widehat{\mathrm{e}_{s_{m}}}$, i.e. that there exists an element $\sigma'$ of $\widehat{\mathcal{B}_{s_m}}$ such that $\widehat{\mathrm{e}_{s_{m}}}(\sigma')$ is a polynomial of degree strictly less than  that of the monic polynomial $\widehat{\mathrm{e}_{s_{m}}}(\sigma)$. It is nevertheless  readily verified that $\sigma$ is automatically a slice when the topology on $\mathcal{B}$ is the discrete one or when $s$ is a local slice with $\mathrm{e}(s)$ of degree $1$. The latter property being automatically satisfied when the base topological ring $\mathcal{A}$ contains $\mathbb{Q}$. Indeed, through the correspondence between restricted exponential homomorphisms $\mathrm{e}:\mathcal{B}\to \mathcal{B}\{T\}$ on complete topological algebras $\mathcal{B}$ containing $\mathbb{Q}$ and topologically integrable derivations $\partial$ of $\mathcal{B}$, local slices for $\mathrm{e}$ correspond to elements of $\operatorname{Ker}\partial^2\setminus \operatorname{Ker}\partial$ and a local slice $s$ is a slice if and only if $\mathrm{e}(s)=s+T$. Corollary \ref{lem:localized-sliced} then implies the following:
\begin{prop} \label{prop:Loc-Struct-Slice-Der}  Let $\mathcal{A}$ be a topological ring containing $\mathbb{Q}$, let $\mathcal{B}$ be a complete topological $\mathcal{A}$-algebra  and let $\partial$ be a topologically integrable $\mathcal{A}$-derivation of $\mathcal{B}$. If $\operatorname{Ker}\partial^2\setminus \operatorname{Ker}\partial$ is not empty then for every $s\in \operatorname{Ker}\partial^2\setminus \operatorname{Ker}\partial$ with $\partial(s)=s_1\in \operatorname{Ker}\partial$, there exists an isomorphism of topological $\widehat{(\operatorname{Ker}\partial)_{s_1}}$-algebras $$\widehat{\mathcal{B}_{s_1}}\stackrel{\cong}{\rightarrow}\widehat{(\operatorname{Ker}\partial)_{s_1}}\{S\}$$ which maps the induced topologically integrable $\widehat{(\operatorname{Ker}\partial)_{s_1}}$-derivation $\widehat{\partial_{s_1}}$ of $\widehat{\mathcal{B}_{s_1}}$ onto the topologically integrable $\widehat{(\operatorname{Ker}\partial)_{s_1}}$-derivation $\tfrac{\partial}{\partial{S}}$ of $\widehat{(\operatorname{Ker}\partial)_{s_1}}\{S\}$. 
\end{prop}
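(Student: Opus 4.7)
The plan is to reduce to Proposition \ref{prop:Loc-Struct-Slice} b) via the bijection of Theorem \ref{top-int-1-1-exp} between topologically integrable derivations and restricted exponential homomorphisms. Since $\mathcal{A}$ contains $\mathbb{Q}$, the derivation $\partial$ is uniquely determined by the restricted exponential $\mathcal{A}$-homomorphism $\mathrm{e}=\exp(T\partial)\colon \mathcal{B}\to \mathcal{B}\{T\}$, with $\mathcal{B}^{\mathrm{e}}=\ker\partial$ and $\mathrm{e}_i=\tfrac{1}{i!}\partial^i$. The hypothesis $s\in \ker\partial^2\setminus \ker\partial$ with $\partial(s)=s_1$ therefore translates into $\mathrm{e}(s)=s+s_1 T$, so $s$ is a local slice for $\mathrm{e}$ with $\mathrm{e}_1(s)=s_1\in \mathcal{B}^{\mathrm{e}}$.

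Next I would pass to the separated completed localization at the multiplicatively closed subset $\{s_1^n\}_{n\geq 0}\subset \mathcal{B}^{\mathrm{e}}$. By Proposition \ref{prop:Localization-Kernel}, this produces a restricted exponential homomorphism $\widehat{\mathrm{e}_{s_1}}\colon \widehat{\mathcal{B}_{s_1}}\to \widehat{\mathcal{B}_{s_1}}\{T\}$ which, under Theorem \ref{top-int-1-1-exp}, corresponds precisely to the induced derivation $\widehat{\partial_{s_1}}$. If $\widehat{\mathcal{B}_{s_1}}=\{0\}$ the statement is vacuous, so assume otherwise. Lemma \ref{lem:localized-sliced} then shows that the image $\sigma\in\widehat{\mathcal{B}_{s_1}}$ of $s_1^{-1}s$ is regular and satisfies $\widehat{\mathrm{e}_{s_1}}(\sigma)=\sigma+T$, equivalently $\widehat{\partial_{s_1}}(\sigma)=1$. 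Applying Proposition \ref{prop:Loc-Struct-Slice} b) to $\widehat{\mathrm{e}_{s_1}}$ with slice $\sigma$ yields an isomorphism of topological $\widehat{\mathcal{B}_{s_1}^{\mathrm{e}}}$-algebras
\[
\theta_\sigma\colon \widehat{\mathcal{B}_{s_1}^{\mathrm{e}}}\{S\}\;\xrightarrow{\;\cong\;}\;\widehat{\mathcal{B}_{s_1}},\qquad S\longmapsto \sigma,
\]
and since $\mathcal{B}^{\mathrm{e}}=\ker\partial$ gives $\widehat{\mathcal{B}_{s_1}^{\mathrm{e}}}=\widehat{(\ker\partial)_{s_1}}$, the inverse $\theta_\sigma^{-1}$ is the isomorphism demanded by the statement.

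It finally remains to check that $\theta_\sigma^{-1}$ intertwines $\widehat{\partial_{s_1}}$ with $\tfrac{\partial}{\partial S}$. Since $\theta_\sigma$ is a $\widehat{(\ker\partial)_{s_1}}$-algebra homomorphism and $\widehat{\partial_{s_1}}$ vanishes on $\widehat{(\ker\partial)_{s_1}}$, the transported derivation on $\widehat{(\ker\partial)_{s_1}}\{S\}$ is a topologically integrable $\widehat{(\ker\partial)_{s_1}}$-derivation; it sends $S$ to $\theta_\sigma^{-1}(\widehat{\partial_{s_1}}(\sigma))=\theta_\sigma^{-1}(1)=1$. Because a continuous $\widehat{(\ker\partial)_{s_1}}$-derivation of $\widehat{(\ker\partial)_{s_1}}\{S\}$ is uniquely determined by its value on $S$ (compare Example \ref{exa:Standard-PLFIHD-RPS}), this derivation must be $\tfrac{\partial}{\partial S}$. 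The most delicate point in the chain is the regularity and slice property of $\sigma$ provided by Lemma \ref{lem:localized-sliced}, which in turn relies on the closed embedding $\mathcal{B}^{\mathrm{e}}\hookrightarrow\mathcal{B}$ of Proposition \ref{lem:kernel} a) and on Lemma \ref{lem:Injectivity-Loc}, ensuring that separated completed localization at $s_1$ is compatible with passing to invariants and justifies the identification $\widehat{\mathcal{B}_{s_1}^{\mathrm{e}}}=\widehat{(\ker\partial)_{s_1}}$ used above.
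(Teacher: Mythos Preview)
Your argument is correct and is exactly the translation the paper intends: the text introduces this proposition with ``Proposition \ref{prop:Loc-Struct-Slice} can be translated as follows'' and gives no further proof, so your reduction via Theorem \ref{top-int-1-1-exp} to the cylinder isomorphism $\theta_\sigma$ of Proposition \ref{prop:Loc-Struct-Slice} b), together with the verification that the transported derivation sends $S$ to $1$, is precisely the intended route. Your additional care in justifying the identification $\widehat{\mathcal{B}_{s_1}^{\mathrm{e}}}=\widehat{(\ker\partial)_{s_1}}$ and in handling the degenerate case $\widehat{\mathcal{B}_{s_1}}=\{0\}$ goes slightly beyond what the paper spells out but is entirely in line with its framework.
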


\section{Examples: additive group actions on the affine ind-space}
\label{sec:5} We illustrate the results of the previous sections on examples of restricted exponential homomorphisms and associated topologically integrable derivations on pro-polynomial rings 
which correspond, under the anti-equivalence of categories 
reviewed in subsection \ref{sub:abstract-nonsense} to actions of the additive group $\mathbb{G}_a$ on the "affine ind-space" $\mathbb{A}^{\infty}$. In particular, we give a construction which associates to every $\mathbb{G}_{a,k}$-action on an affine scheme of finite type over a field $k$ of characteristic zero an action of $\mathbb{G}_{a,k}$ on $\mathbb{A}^{\infty}_k$. 

\medskip

Our construction builds on the following general observation: 

\begin{prop} \label{ex:CompleteTensorRep}Let $R$ be a finitely generated algebra over a field $k$. Then the covariant functor $$R\widehat{\otimes}_k -\colon (\mathrm{CRTop}_{/k})\to (\mathrm{Sets})$$ 
associating to a complete topological $k$-algebra $\mathcal{A}$ the completed tensor product $R\widehat{\otimes}_k \mathcal{A}$ is representable by a pro-polynomial ring.
\end{prop}
\begin{proof} 
Since $R$ is a finitely generated, $k$-algebra, it is a countable $k$-vector space. We can thus write $R=\varinjlim_{n \in \mathbb{N}} V_n=\bigcup_{n\in \mathbb{N}} V_n$ where the $V_n$ are an increasing sequence of finite dimensional $k$-vector spaces $V_0\subset \cdots \subset V_n \subset V_{n+1} \subset \cdots$ which form an exhaustion of $R$. For every $n\leq n'$, the inclusion $V_n \subset V_{n'}$ induces a surjection $ V_{n'}^{\vee} \to  V_{n}^{\vee}$ between the duals of $V_{n'}$ and $V_n$ respectively, hence a surjective $k$-algebra homomorphism $\mathrm{Sym}^{\cdot}(V_{n'}^{\vee}) \to \mathrm{Sym}^{\cdot}(V_{n}^{\vee})$ between the symmetric $k$-algebras of $V_{n'}^\vee$ and $V_n^\vee$, respectively. 
Viewing each $P_n=\mathrm{Sym}^{\cdot}(V_{n}^{\vee})$ as endowed with the discrete topology, the $k$-algebra 
$\mathcal{P}=\varprojlim_{n\in \mathbb{N}} P_n$ endowed with the limit topology is a pro-polynomial ring with a fundamental system of.
open neighborhoods of $0_{\mathcal{P}}$ consisting of the kernel $\mathfrak{p}_n$ of the canonical surjections $p_n:\mathcal{P}\to \mathrm{Sym}(V_n^\vee)$.

Now let  $\mathcal{A}$ be a  complete topological $k$-algebra an let $(\mathfrak{a}_m)_{m\in \mathbb{N}}$ be any fundamental system of open neighborhoods of $0_{\mathcal{A}}$.Since tensor product commutes with colimits and the $k$-vector spaces $V_n$ are finite dimensional, we have natural isomorphisms 
\begin{align*}
\begin{array}{rcl}
     R\widehat{\otimes}_k \mathcal{A} = \varprojlim_{m\in \mathbb{N}} (R\otimes_k A/\mathfrak{a}_m)  & = & \varprojlim_{m\in \mathbb{N}} ((\varinjlim_{n \in \mathbb{N}}V_n) \otimes_k A/\mathfrak{a}_m) \\ 
     & \cong &  \varprojlim_{m\in \mathbb{N}}( \varinjlim_{n \in \mathbb{N}}  (V_n \otimes_k A_m)) \\
     & \cong & \varprojlim_{m\in \mathbb{N}}( \varinjlim_{n \in \mathbb{N}}( \mathrm{Hom}_{k-\mathrm{mod}} (V_n^\vee, A_m))\\
     & \cong & \varprojlim_{m\in \mathbb{N}}( \varinjlim_{n \in \mathbb{N}}( \mathrm{Hom}_{k-\mathrm{alg}} (\mathrm{Sym}(V_n^\vee), A/\mathfrak{a}_m))\\
     & \cong & 
     \varprojlim_{m\in \mathbb{N}}( \varinjlim_{n \in \mathbb{N}}( \mathrm{Hom}_{k-\mathrm{alg}} (\mathcal{P}/\mathfrak{p}_n,A/\mathfrak{a}_m))\\
     & \cong & \mathrm{CHom}_k(\mathcal{P},\mathcal{A})
\end{array}
\end{align*}
The above isomorphism $ \Phi_{\mathcal{A}}:\mathrm{CHom}_k(\mathcal{P},\mathcal{A})  R\widehat{\otimes}_k \mathcal{A}$ 
is easily seen from the construction to be functorial in $\mathcal{A}$, defining an isomorphism of covariant functors $\Phi\colon \mathrm{CHom}_k(\mathcal{P},-) \to R\widehat{\otimes}_k -$ 
 which shows that the pro-polynomial ring $\mathcal{P}$ represents the functor $R\widehat{\otimes}_k -$. 
 
 The universal element $u=\Phi_{\mathcal{P}}(\mathrm{id}_{\mathcal{P}})\in R\widehat{\otimes}_k \mathcal{P}$ can be described as follows. For every $n\in \mathbb{N}$, let $u_n\in R\otimes_k P_n=R\otimes_k \mathrm{Sym}^{\cdot} (V_n^\vee)$ be the image by the natural homomorphism $$V_n\otimes_k V_n^{\vee}\rightarrow R\otimes_k \mathrm{Sym}^{\cdot} (V_n^\vee)$$
of the element corresponding to $\mathrm{id}_{V_n}$ via the isomorphism $\mathrm{Hom}_k(V_n,V_n)\cong V_n\otimes_k V_n^{\vee}$. The collection of elements $u_n\in R\otimes_k P_n$ is an inverse system with respect to the projection homomorphisms $R\otimes_k P_{n'}\rightarrow R\otimes_k P_n$, $n \leq  n'$ and we have  $u=\varprojlim_{n\in \mathbb{N}} u_n \in \varprojlim_{n\in \mathbb{N}} R\otimes_k P_n=R\widehat{\otimes}_k \mathcal{P}$.
\end{proof}

 \label{ex:coordinate-ring} Through the anti-equivalence between the category $(\mathrm{CRTop}_{/k})$ and the category of restricted strict affine ind-$k$-schemes discussed in subsection \ref{sub:abstract-nonsense}, Proposition \ref{ex:CompleteTensorRep} has the following interpretation. Every affine $k$-scheme $X=\mathrm{Spec}(R)$  determines 
a covariant functor $$F_X=\mathrm{Mor}(X,\mathbb{A}^1_k):(\mathrm{Aff}_{/k})^{\mathrm{opp}}\to (\mathrm{Sets}),\; S \mapsto \mathrm{Hom}_{S}(X\times_{\mathrm{Spec}(k)}  S,\mathbb{A}^1_S)=R\otimes_k\Gamma(S,\mathcal{O}_S).$$
When $X$ is of finite type over $k$, the fact that as a $k$-vector space, $R=\varinjlim_{n\in \mathbb{N}} V_n$ for a chain $(V_0\subset V_1\subset \cdots \subset V_n\subset \cdots)_{n\in \mathbb{N}}$ of finite dimensional $k$-vector spaces implies that $F_X$ is a restricted strict affine ind-$k$-scheme isomorphic to the colimit\footnote{Taken in the category of set-valued contravariant functors on the category of affine $k$-schemes.}  $\varinjlim_{n\in \mathbb{N}} \mathrm{Spec}(\mathrm{Sym}(V_n^\vee))$. 

It follows in particular that 
the isomorphism type of $F_X=\mathrm{Mor}(X,\mathbb{A}^1_k)$ as a restricted strict affine ind-$k$-scheme is independant of the affine $k$-scheme of finite type $X$: it is isomorphic to the colimit $$\mathbb{A}^\infty_k:=\varinjlim_{n\in \mathbb{N}} \mathbb{A}^n_k$$  
for a chain of closed embeddings $(\mathbb{A}^0_k=\mathrm{Spec}(k)\hookrightarrow \mathbb{A}^1_k\hookrightarrow \cdots \hookrightarrow \mathbb{A}^n_k\hookrightarrow \cdots )_{n\in \mathbb{N}}$
as successive linear subspaces, that is, to the restricted strict affine ind-$k$-scheme corresponding to a pro-polynomial ring $\mathcal{P}$ in countably many indeterminates over $k$.

\medskip

Keeping the notation above, assume further that the affine $k$-scheme of finite $X$ is endowed with an action $\mu:\mathbb{G}_{a,k}\times_k X\to X$ of the additive group scheme $\mathbb{G}_{a,k}$. Then the natural transformation $\hat{\mu}:\mathbb{G}_{a,k}\times_k F_X\to F_X$ given by pre-composing morphisms from $X$ to $\mathbb{A}^1_k$ with the action $\mu$ is an action of $\mathbb{G}_{a,k}$ on the restricted strict affine ind-scheme $F_X$. Through the anti-equivalence above, the latter corresponds in turn to a certain exponential $k$-homomorphism $\mathrm{e}_{X,\mu}:\mathcal{P}\to \mathcal{P}\{T\}$ on a pro-polynomial ring $\mathcal{P}$. In the rest of this section, we assume further for simplicity that $k$ is a field of characteristic zero and proceed to give a more explicit construction and description of the topologically integrable derivation $\partial_{X,\mu}$ corresponding to this exponential homomorphism.

\medskip 

Under our hypothesis, the $\mathbb{G}_{a,k}$-action $\mu:\mathbb{G}_{a,k}\times_k X\to X$ on $X=\mathrm{Spec}(R)$ corresponds to a classical exponential $k$-homomorphism $\exp(T\delta):R\to R[T]$ associated to a locally nilpotent $k$-derivation $\delta$ of $R$. Then $R$ admits an exhaustion by a countable family $\mathcal{W}=\{W_n\}_{n\in \mathbb{N}}$ of finite dimensional $\delta$-stable $k$-vector subspaces. Indeed, given any exhaustion of $R$ by a countable family $\mathcal{V}=\{V_n\}_{n\in \mathbb{N}}$ of finite dimensional $k$-vector subspaces, the fact that $\delta$ is locally nilpotent implies that for every $n\in \mathbb{N}$, the $k$-vector subspace $W_n$ generated by the elements $\delta^m(f)$, $m\geq 0$, where the elements $f$ run through a $k$-basis of $V_n$ is finite dimensional and $\delta$-stable. Furthermore, since $V_n\subseteq V_m$ for every $m\geq n$ and $V_n\subseteq W_n$, we have $V_n\subseteq W_n\subseteq W_m$ so that the $W_m$ form an increasing exhaustion of $R$ by $\delta$-stable finite dimensional $k$-vector subspaces. 

Let $\mathcal{W}=\{W_m\}$ be a $\delta$-stable exhaustion of $R$ as above. Then, for every $m\in \mathbb{N}$, the restriction of $\delta$ to $W_m$ is a nilpotent $k$-linear endomorphism $\delta_m$ of $W_m$. The dual endomorphism $\delta_m^{\vee}$ of $W_m^{\vee}$ defines a unique $k$-derivation $\partial_m$ of the symmetric algebra $\mathrm{Sym}^{\cdot}(W_m^{\vee})$ of $W_m^{\vee}$, which is locally nilpotent.  The collection of so-defined locally nilpotent $k$-derivations $\partial_m$ of the $k$-algebras $\mathrm{Sym}^{\cdot}(W_m^{\vee})$ form an inverse system with respect to the surjective projection homomorphisms $p_{m,n}\colon \mathrm{Sym}^{\cdot}(W_m^{\vee})\rightarrow \mathrm{Sym}^{\cdot}(W_n^{\vee})$ associated to the inclusions $W_n\subseteq W_m$, $m\geq n$. By Proposition \ref{prop:Loc-Struct-Slice-Der}, there exists a unique topologically integrable $k$-derivation $\partial_{X,\mu}$ of the pro-polynomial $k$-algebra  $k$-algebra $\mathcal{P}=\varprojlim_{n\in \mathbb{N}} \mathrm{Sym}^{\cdot}(W_n^{\vee})$ representing the covariant functor $R\widehat{\otimes}_k-$ of Proposition \ref{ex:CompleteTensorRep} such that for every $n\in \mathbb{N}$, we have $\partial_n\circ p_n=p_n\circ \partial_{X,\mu}$, where $p_n\colon\mathcal{P}\rightarrow  \mathrm{Sym}^{\cdot}(W_n^{\vee})$ is the canonical continuous projection.

\begin{example}\label{exa:translation}
As a concrete illustration of the construction above, consider  the locally nilpotent $k$-derivation $\delta=\partial/\partial x$ of $R=k[x]$ corresponding to the action $\mu$ of  $\mathbb{G}_{a,k}$ on $\mathbb{A}^1_k$ by translations and the exhaustion of $R$ by the $\delta$-stable subspaces $$W_n=k[x]_{\leq n}=k\langle x^0,\ldots, x^n\rangle, \quad n\in \mathbb{N},$$ consisting of polynomials of degree less than or equal to $n$. For every $n\in \mathbb{N}$, the algebra   $\mathrm{Sym}^{\cdot}(W_n^{\vee})$ is isomorphic to the polynomial ring $k[X_0,\ldots, X_n]$, where $(X_0,\ldots, X_n,\ldots )$ is the family of elements of the dual $R^{\vee}$ of $R$ as a $k$-vector space defined by $X_i(x_j)=\delta_{i,j}$ for every $i,j\in \mathbb{N}$. Putting $P_n=\mathrm{Sym}^{\cdot}(W_n^{\vee})=k[X_0,\ldots, X_n]$ and letting, for every $m\geq n$ 
$p_{n,m}:P_m\to P_n$ the surjection with kernel $(X_{n+1},\ldots, X_m)P_m$, the pro-polynomial $k$-algebra $\mathcal{P}=
\varprojlim_{n\in \mathbb{N}} P_n$ is isomorphic to the separated completion of the polynomial ring $k[(X_i)_{i\in \mathbb{N}}]$ with respect to the topology induced by the fundamental system of open ideals $\mathfrak{a}_n=(X_i)_{i\geq n}k[(X_i)_{i\in \mathbb{N}}]$. 
On the other hand, the $k$-derivation $\partial_n$ of  $P_n$ 
is the composition of the $k$-derivation $$\tilde{\partial}_n=\sum_{i=0}^n (i+1)X_{i+1}\frac{\partial}{\partial X_i}:P_n\to P_{n+1}$$ with the projection $p_{n+1,n}:P_{n+1}\to P_n$. It is triangular, hence locally nilpotent. The corresponding topologically integrable $k$-derivation $\partial_{\mathbb{A}^1_k,\mu}=\varprojlim_{n\in \mathbb{N}} \partial_n$ of $\mathcal{P}$ coincides with the topologically integrable $k$-derivation $\widehat{\partial}$ of $\mathcal{P}$ induced through Lemma \ref{lem:Pro-LND-CompExt} by the uniformly equicontinuous derivation of $k[(X_i)_{i\in \mathbb{N}}]$ defined by $\partial(X_i)=(i+1)X_{i+1}$.

It is an instance of a topologically integrable $k$-derivation with trivial kernel $\mathrm{Ker}(\partial_{\mathbb{A}^1_k,\mu})=k$, in fact, one has more generally $\mathrm{Ker}(\partial_{\mathbb{A}^1_k,\mu}^n)=k$ for every $n\geq 1$. Indeed, it is straightforward to check by induction on $n$ that the kernel of the $k$-derivation $\tilde{\partial}_n$ equals $k$. Since $p_{n+1}\circ \partial_{\mathbb{A}^1_k,\mu}=\tilde{\partial}_n\circ p_n$, where $p_n:\mathcal{P}\to P_n$ is the canonical surjection, it follows that $\mathrm{Ker}(\partial_{\mathbb{A}^1_k,\mu})=k$. 
Now if
$\operatorname{Ker}(\partial_{\mathbb{A}^1_k,\mu}^n)\setminus
\operatorname{Ker}(\partial_{\mathbb{A}^1_k,\mu}^{n-1})\neq \emptyset$ for some
$n\geq 2$, then there would exist an element
$s\in \operatorname{Ker}(\partial_{\mathbb{A}^1_k,\mu}^2)\setminus
\operatorname{Ker}(\partial_{\mathbb{A}^1_k,\mu})$. Letting $s_1=\partial_{\mathbb{A}^1_k,\mu}(s)\in k\setminus\{0\}$,
it would follows from Proposition \ref{prop:Loc-Struct-Slice-Der} that
$\mathcal{P}=\widehat{\mathcal{P}_{s_1}}\cong \widehat{k_{s_1}}\{S\}=k[S]$ is a polynomial ring in one variable $S$ over $k$ endowed with the discrete topology, which is
impossible. Thus
$\operatorname{Ker}(\partial_{\mathbb{A}^1_k,\mu}^n)=\operatorname{Ker}(\partial_{\mathbb{A}^1_k,\mu})=k$ for every
$n\geq 1$.
\end{example}

\appendix
\section*{Appendix on topological algebra}
\renewcommand{\thethm}{\Alph{subsection}.\arabic{thm}}
\renewcommand{\thesubsection}{\Alph{subsection}}
In this appendix we  collect general results on topological groups, rings and modules. Standard references for theses topics are for instance Bourbaki \cite[Chapter~III]{BourbakiGT}, \cite[Chapter~III]{BourbakiCA} and Northcott \cite{N68}. 

\subsection{Topological groups} 

\begin{defn}
A topological abelian group is an abelian group $G$
endowed with a topology for which the map $G\times G\rightarrow G$,
$(x,y)\mapsto x-y$ is continuous. The topology on $G$ is called linear
if $G$ has a fundamental system of open neighborhoods of its neutral
element $0_G$ consisting of open subgroups.    
\end{defn}

We henceforth denote by $(\mathrm{GTop})$ the category whose objects are topological abelian groups $G$ endowed with a linear topology for which there exists a \emph{countable} fundamental system of open neighborhoods of $0_G$ and whose morphisms are continuous homomorphisms. 
In the sequel, unless otherwise stated, the term topological group will always refer to an object $G$ of $(\mathrm{GTop})$.  
The underlying topological space of such a $G$ is in particular a
first-countable topological space. Given a countable fundamental system of open neighborhoods
$(H_n)_{n \in \mathbb{N}}$ of $0_G$ parametrized by the set $\mathbb{N}$ of
non-negative integers, we will always assume in addition
that $H_{0}=G$ and that $H_{m}\subseteq H_n$ whenever $m\geq n$. A continuous homomorphism $f\colon G\rightarrow G'$ 
between objects of $(\mathrm{GTop})$  will be referred to as a \emph{homomorphism of topological groups}. Such homomorphisms are automatically uniformly continuous in the sense of \cite[II.2.1]{BourbakiGT}.

\subsubsection{Separated completion of a topological group}\label{sep:completions}

A topological group is called \emph{separated} if it is separated as a topological space. This holds if and only if the     intersection of all open subgroups of $G$ consists of the neutral element $0_G$ only, hence,  since every open subgroup of a topological group is also closed \cite[III.2.1 Corollary to Proposition 4]{BourbakiGT}, if and only if $\{0_G\}$ is a closed subset of $G$.

\begin{defn}
\label{def:Convergence} A family $(x_{i})_{i\in I}$ of elements of topological group $G$ indexed by a countable set $I$ is:

a) \emph{Cauchy} if for every open subgroup $H$ of $G$
 there exists a finite subset $J(H)$ of $I$ such that $x_{i}-x_{j}\in H$ for all $i,j\in I\setminus J(H)$.

b) \emph{Convergent} to an element $x\in G$ if for every open subgroup $H$ of $G$ 
there exists a finite subset $J(H)$ such that $x_{i}-x\in H$ for all $i\in I\setminus J(H)$.
\end{defn}

\begin{defn} A topological group $G$ is called \emph{complete} if it is separated and every Cauchy family $(x_{i})_{i\in I}$ of elements of $G$ is convergent in $G$.
 We denote by   $(\mathrm{CGTop})$ the full subcategory of  $(\mathrm{GTop})$ whose objects are complete topological groups.
\end{defn}

\begin{prop} 
\label{lem:Compl-Homo-Extension}
The inclusion functor $i:(\mathrm{CGTop})\to (\mathrm{GTop})$ has a left adjoint 
\begin{equation}\label{eq:sep_comp}
\widehat{\;}:(\mathrm{GTop})\to (\mathrm{CGTop}).
\end{equation}
\end{prop}
\begin{proof}
By \cite[III.2.6 Proposition 5]{BourbakiCA} and \cite[III.3.4 Proposition 8]{BourbakiGT}, for a topological group $G$, the functor 
\begin{equation}
F_G:=\mathrm{Hom}_{(\mathrm{GTop})}(G,-):(\mathrm{CGtop}) \to (\mathrm{Sets})    
\end{equation}
is representable by a complete topological group $\widehat{G}$, equal to the limit in the category of topological groups of the inductive system of discrete topological groups $G/H$, where $H$ ranges through the set $\Gamma$ of open subgroups of $G$. Note that since any countable fundamental system $(H_n)_{n \in \mathbb{N}}$ of open neighborhoods of $0_G$ is cofinal in $\Gamma$, the canonical homomorphism
  $ \widehat{G}\rightarrow \varprojlim_{n\in\mathbb{N}}G/H_{n}$ is an
  isomorphism of topological groups. It follows that $\widehat{G}$ 
has a countable fundamental system of
  open subgroups  consisting of the kernels of the canonical
  projections $\widehat{p}_{H_n}:\widehat{G}\to G/H_n$, $n\in \mathbb{N}$, hence that $\widehat{G}$ is indeed a topological group in our restricted sense. 
For every homomorphism of topological groups $h:G\to G'$, the universal properties of $\widehat{G}$ and $\widehat{G'}$ imply that the natural transformation $F_{G'}\to F_G$ induced by the composition with $h$ uniquely corresponds to a homomorphism of complete topological groups $\widehat{h}:\widehat{G}\to \widehat{G'}$. It is then straightforward to verify that association  
$$\widehat{\;}:(\mathrm{GTop})\to (\mathrm{CGTop}),\quad G\mapsto \widehat{G}, \quad (h:G\to G')\mapsto (\widehat{h}:\widehat{G}\to \widehat{G'}) $$
is a functor with the desired property. 
\end{proof}

\begin{defn}
\label{def:Separated-Completion}
For a topological group $G$, the complete topological group $\widehat{G}$ is called the \emph{separated completion} of $G$. The homomorphism of topological groups $c:G\to \widehat{G}$ determined by the unit of the adjunction is called the \emph{separated completion homomorphism}.    
\end{defn}
\begin{prop}[{\cite[III.7.3 Proposition 2]{BourbakiGT}}] \label{prop:sep-comp}
 For a topological group $G$, the image of separated completion homomorphism $c:G\to \widehat{G}$ is a dense subgroup of $\widehat{G}$. Moreover, the induced homomorphism of topological groups $c:G\to c(G)$ is open, with kernel equal to the closure in $G$ of the subset $\{0_G\}$. 
\end{prop} 

\begin{lem}\label{lem:inverselim-top-grp} Let $(G_{n})_{n\in \mathbb{N}}$ be an inverse system of complete topological groups with surjective transition homomorphisms $p_{m,n}\colon G_m\rightarrow G_n$ for every $m\geq n\geq 0$. Then the limit $\mathcal{G}=\varprojlim_{n\in\mathbb{N}}G_{n}$ endowed with the inverse limit topology is a complete topological group and each canonical projection $\widehat{p}_n\colon\mathcal{G}\rightarrow G_n$ is a surjective homomorphism of topological groups. 
\end{lem}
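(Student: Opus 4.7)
The plan is to verify three things in turn: that $\mathcal{G}$ is a topological group in the sense of Section~\ref{sec:1} (that is, linearly topologized and admitting a countable fundamental system of open subgroups at $0$), that each canonical projection $\widehat{p}_n\colon \mathcal{G}\to G_n$ is surjective, and finally that $\mathcal{G}$ is complete.

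First I would unpack the inverse limit topology: a subbase of open subgroups at $0$ is given by $U_{n,k}=\widehat{p}_n^{-1}(H_n^{(k)})$, where $(H_n^{(k)})_{k\in\mathbb{N}}$ is a countable fundamental system of open subgroups of $G_n$ (which exists by our standing hypothesis on each $G_n$). Since $\mathbb{N}\times\mathbb{N}$ is countable and the $U_{n,k}$ are open subgroups whose pairwise intersections belong to the family (after using both $G_n$ and $G_m$ are dominated by some $G_\ell$), the family $(U_{n,k})_{n,k}$ is a countable fundamental system of open subgroups of $\mathcal{G}$, so $\mathcal{G}$ is a topological group in our sense. The continuity of each $\widehat{p}_n$ is automatic from the definition of the inverse limit topology.

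Next I would prove surjectivity of $\widehat{p}_n$ directly, bypassing Mittag-Leffler by exploiting the countability of the index set: given $x\in G_n$, define $x_m=p_{n,m}(x)$ for $m\leq n$, and build $x_m$ for $m>n$ by induction, choosing at each step $x_{m+1}$ to be any preimage of $x_m$ under the surjective map $p_{m+1,m}$. The resulting compatible sequence $(x_m)_{m\in\mathbb{N}}$ defines an element of $\mathcal{G}$ whose image under $\widehat{p}_n$ is $x$. Once surjectivity is known, $\widehat{p}_n$ induces an isomorphism $\mathcal{G}/U_{n,k}\xrightarrow{\sim}G_n/H_n^{(k)}$, which will be crucial for the last step.

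For completeness, I would compare $\mathcal{G}$ with its separated completion $\widehat{\mathcal{G}}$. Using the identifications $\mathcal{G}/U_{n,k}\cong G_n/H_n^{(k)}$ and the first statement of the previous proposition, we have
\[
\widehat{\mathcal{G}}\;\cong\;\varprojlim_{(n,k)}\mathcal{G}/U_{n,k}\;\cong\;\varprojlim_{(n,k)}G_n/H_n^{(k)}\;\cong\;\varprojlim_n\Bigl(\varprojlim_k G_n/H_n^{(k)}\Bigr)\;\cong\;\varprojlim_n \widehat{G_n}\;\cong\;\varprojlim_n G_n\;=\;\mathcal{G},
\]
where the second-to-last isomorphism uses that each $G_n$ is already complete. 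A careful check shows that this chain of isomorphisms is inverse to the canonical map $c\colon\mathcal{G}\to\widehat{\mathcal{G}}$, so $\mathcal{G}$ is complete. The only step that really requires any thought is the Fubini-type commutation of the double inverse limit, which I expect to be the main (but still routine) obstacle: one has to make the indexing category a genuine cofiltered set and invoke the standard fact that inverse limits commute with inverse limits.
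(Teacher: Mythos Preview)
Your proof is correct. The paper's own proof is considerably shorter: it simply cites Bourbaki for both surjectivity (Mittag--Leffler, \cite[II.3.5 Corollary I]{BourbakiGT}) and completeness (\cite[II.3.5 Corollary to Proposition 10]{BourbakiGT}), and otherwise only notes that the inverse images $\widehat{p}_n^{-1}(H_n^{(k)})$ give a countable fundamental system of open subgroups of $\mathcal{G}$. Your approach unpacks both citations. For surjectivity, your inductive lifting is exactly the content of Mittag--Leffler in the countable surjective case, so there is no real difference there. For completeness, your Fubini-type argument is a genuine alternative to invoking the general Bourbaki result: it is more explicit and self-contained, at the cost of the indexing subtlety you correctly flag. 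Concretely, the inclusion ordering on the $U_{n,k}$ is not the product order on $\mathbb{N}\times\mathbb{N}$, so one must check that a compatible family in $\widehat{\mathcal{G}}$ really yields a compatible sequence $(g_n)_n\in\varprojlim_n G_n$; this uses continuity of each $p_{m,n}$ to produce, for any given $(n,k)$, some $k'$ with $U_{m,k'}\subseteq U_{n,k}$. One small imprecision: the pairwise intersections $U_{n,k}\cap U_{m,j}$ need not literally belong to the family $\{U_{\ell,i}\}$, but they contain a member of it, which is all that is required for a fundamental system.
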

\begin{proof} The fact that $\mathcal{G}$ endowed with the inverse limit topology is a linearly topologized abelian group and the fact that the canonical projections $\widehat{p}_n\colon \mathcal{G}\rightarrow G_n$ are continous homomorphisms are clear. The surjectivity of $\widehat{p}_n$ follows from Mittag-Leffler theorem  \cite[II.3.5 Corollary I]{BourbakiGT}. A countable fundamental system of open subgroups of $\mathcal{G}$ is given for instance by the collection of inverse images of such fundamental systems of each $G_n$ by the homomorphisms $\widehat{p}_n$. Finally, since each $G_n$ is complete, it follows from \cite[II.3.5 Corollary to Proposition 10]{BourbakiGT} that $\mathcal{G}$ is complete. 
\end{proof}

\subsubsection{Families of topological group homomorphism}

\begin{defn} \label{continuous-convergence} Let $G$ and $G'$ be topological groups and let $(f_i)_{i\in I}$ be a family of group homomorphisms from $G$ to $G'$ indexed by a countable set $I$.

a) The family $(f_i)_{i\in I}$ is said to be \emph{uniformly equicontinuous} if for every open subgroup $H'$ of $G'$ there exists an open subgroup  $H$ of $G$ such that $f_i(H)\subset H'$ for all $i\in I$. 

b) The family $(f_i)_{i\in I}$ is said to be \emph{pointwise convergent} to a map $f:G\to G'$ is for every $g\in G$, the family $(f_i(g))_{i\in I}$ converges to $f(g)$.     
\end{defn}

Every element of a uniformly equicontinuous family of group homomorphisms $(f_i)_{i\in I}$ between topological groups $G$ and $G'$ is in particular a homomorphism of topological groups. 

\begin{prop} \label{pro:Uniform_extention}Let $G$ and $G'$ be topological groups with respective separated completions $c:G\to \widehat{G}$ and $c':G'\to \widehat{G'}$ and let $(f_i)_{i\in I}$ be an uniformly equicontinuous family of topological group homomorphisms from $G$ to $G'$. Then the associated family $(\widehat{f_i})_{i\in I}$ of topological groups homomorphisms from $\widehat{G}$ to $\widehat{G'}$ is uniformly equicontinuous. 

Moreover, if the family $(f_i)_{i\in I}$ converges pointwise to a topological group homomorphism $f:G\to G'$ then the family   $(\widehat{f_i})_{i\in I}$ converges pointwise to $\widehat{f}$.
\end{prop}
\begin{proof}
For every $i\in I$, $\widehat{f_i}$ is the unique topological group homomorphism from $\widehat{G}$ to $\widehat{G'}$ such that $\widehat{f_i}\circ c=c'\circ f_i$. 
After replacing $(f_i)_{i\in I}$ by the family $(c'\circ f_i)_{i\in I}$, which is uniformly equicontinuous because $c'$ is uniformly continuous, we can assume without loss of generality from the beginning that $G'$ is complete. Since $G'$
is separated, each $f_i$ uniquely factors through a homomorphism of topological groups $\tilde{f_i}:G/\overline{\{0_G\}}\to G'$, where   $\overline{\{0_G\}}$ denotes the closure of $\{0_G\}$ in $G$ and $G/\overline{\{0_G\}}$ is endowed with the quotient topology.  Since the quotient map $G\to G/\overline{\{0_G\}}$ is open, the family $(\tilde{f}_i)_{i\in I}$ is uniformly equicontinous. So we can assume further without loss of generality from the beginning that $G$ is separated and identify $G$ with its dense image in $\widehat{G}$ by the now injective topological group homomorphism $c:G\to \widehat{G}$. By definition, $\widehat{f_i}$ is then the unique extension of the uniformly continuous topological group homomorphism $f_i:G\to G'$ to the closure $G$ in $\widehat{G}$. Now let $H'$ be an open subgroup of $G'$, let $H$ be any open subgroup of $G$
so that $f_i(H)\subset H'$ for all $i\in I$ and let $\widehat{H}$ be an open subgroup of $\widehat{G}$ such that $G\cap \widehat{H}=H$. Let $\widehat{h}\in \widehat{H}$. Since $H$ is dense in $\widehat{H}$, it follows construction of $\widehat{f_i}$ that for every $i\in I$, there exists $h_i\in H$ such that $\widehat{f_i}(\widehat{h})-f_i(h_i)\in H'$. This implies in turn that $\widehat{f_i}(\widehat{h})=(\widehat{f_i}(\widehat{h})-f_i(h_i))+f_i(h_i)$ belong to $H'$ for every $i\in I$, showing that the family $(\widehat{f_i})_{i\in I}$ is uniformly equicontinuous. The second assertion is straightforward using the uniform continuity of $\hat{f}$ and the $\widehat{f_i}$, $i\in I$.    
\end{proof}

\subsection{Topological rings and modules}

\begin{defn}
    A commutative topological ring $\mathcal{A}$ is a topological abelian group endowed with a ring structure for which the multiplication $\mathcal{A}\times \mathcal{A}\rightarrow \mathcal{A}$ is continuous.  A module $M$ over a topological ring $\mathcal{A}$ is a called a topological $\mathcal{A}$-module if it is a topological abelian group and the scalar multiplication $\mathcal{A}\times M\rightarrow M$ is continuous, where $\mathcal{A}\times M$ is endowed with product topology. 
    \end{defn} 

We denote by $(\mathrm{RTop})$ the category whose object are commutative topological rings $\mathcal{A}$  endowed with a linear topology for
which there exists a fundamental system of neighborhoods of $0_{\mathcal{A}}$
consisting of a countable family $(\mathfrak{a}_{n})_{n\in\mathbb{N}}$
of ideals of $\mathcal{A}$ and whose morphisms are continuous homomorphisms of topological rings. We refer these objects and morphisms simply to as \emph{topological rings} and \emph{homomorphisms of topological rings}, respectively. Similarly, for a topological ring $\mathcal{A}$, we denote by $(\mathrm{MTop}_{\mathcal{A}})$ the category whose objects are topological $\mathcal{A}$-modules $M$ endowed with a linear topology
that admits a fundamental system of neighborhoods of $0_M$ consisting of a
countable family of open submodules $(M_{n})_{n\in\mathbb{N}}$ and whose morphisms are continuous homomorphisms of topological $\mathcal{A}$-modules.

\medskip  

A topological ring or a topological module over a topological ring is called separated (resp. complete) if its separated (resp. complete) as a topological group. We denote by $(\mathrm{CRTop})$ and $(\mathrm{CMTop}_{\mathcal{A}})$ the full subcategories of complete topological rings and complete topological modules over a topological ring $\mathcal{A}$.
Given a topological ring $\mathcal{A}$ (resp. a topological module $M$ over a topological ring $\mathcal{A}$) the separated completion
$\widehat{\mathcal{A}}$ of $\mathcal{A}$ (resp. $\widehat{M}$ of $M$) as a topological group carries the structure of a complete topological ring (resp.of a complete topological $\mathcal{A}$-module) for which the canonical homomorphism $c\colon \mathcal{A}\rightarrow\widehat{\mathcal{A}}$ (resp.
$c\colon M\rightarrow\widehat{M}$) is a homomorphism of topological rings (resp. of topological $\mathcal{A}$-modules). 

\subsubsection{Completed tensor product}\label{completed:tensor}
We recall basic properties of completed tensor products of topological modules, see \cite[III]{BourbakiCA} and  \cite[0.7.7]{GD}.

\begin{prop}[{\cite[III Exercise 28]{BourbakiCA}}] For every pair of topological modules $M$ and $N$ over a topological ring $\mathcal{A}$, the functor $B_{N,M}:(\mathrm{CMTop}_\mathcal{A})\to (\mathrm{Set})$ which associate to a complete topological $\mathcal{A}$-module $E$ the set of $\mathcal{A}$-bilinear homomorphisms $M\times N\to E$ which are continuous with respect to the product topology on $M\times N$ is representable
\end{prop}
\begin{proof}
It is straightforward to verify that $B_{N,M}$ is represented by the pair consisting of the separated completion $\widehat{M\otimes_{\mathcal{A}}N}$
of the tensor product $M\otimes_{\mathcal{A}}N$ with respect to the linear topology
generated by open neighborhoods of $0$ of the form 
$U\otimes N+M\otimes V$, where $U$ and $V$ run respectively through the set of open $\mathcal{A}$-submodules of $M$ and $N$ respectively, and the continuous $\mathcal{A}$-bilinear homomorphism $\tau:M\times N \to \widehat{M\otimes_{\mathcal{A}}N}$ defined as the composition of the canonical homomorphism of topological $\mathcal{A}$-modules  $M\times N\rightarrow M\otimes_{\mathcal{A}}N$ with the separated completion homomorphism $c\colon M\otimes_{\mathcal{A}}N\rightarrow \widehat{M\otimes_{\mathcal{A}}N}$.
\end{proof}

The complete topological $\mathcal{A}$-module representing the functor $B_{N,M}$ is called the \emph{completed tensor product} of $M$ and $N$ is denoted for short by $M\widehat{\otimes}_{\mathcal{A}}N$.
As for the usual tensor
product, its universal property implies the following associativity
result (cf. \cite[II.3.8, Proposition 8]{BourbakiA}):
\begin{lem}
\label{lem:Complete-Tensor-Associative}Let $\mathcal{A}$ be a topological
ring, let $M$ and $\mathcal{B}$ be respectively a topological $\mathcal{A}$-module
and a topological $\mathcal{A}$-algebra and let $N$ and $P$ be
topological $\mathcal{B}$-modules. Then there is a canonical isomorphism
of complete topological $\mathcal{B}$-modules 
\[
(M\widehat{\otimes}_{\mathcal{A}}N)\widehat{\otimes}_{\mathcal{B}}P\cong M\widehat{\otimes}_{\mathcal{A}}(N\widehat{\otimes}_{\mathcal{B}}P)
\]
where $M\widehat{\otimes}_{\mathcal{A}}N$ is viewed as topological $\mathcal{B}$-module
via the $\mathcal{B}$-module structure of $N$.
\end{lem}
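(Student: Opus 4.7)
The plan is to mimic the algebraic associativity proof for ordinary tensor products but with careful attention to continuity. The core idea is to construct continuous morphisms in each direction using the universal property of the completed tensor product as stated in the excerpt, and then to identify them as mutual inverses by exploiting the density of the image of elementary tensors together with the separatedness of the targets.

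First I would fix $p\in P$ and consider the map $\beta_p\colon M\times N\rightarrow M\widehat{\otimes}_{\mathcal{A}}(N\widehat{\otimes}_{\mathcal{B}}P)$ defined by $(m,n)\mapsto \tau(m,\tau(n,p))$. This is $\mathcal{A}$-bilinear, and its continuity follows from the continuity of the two canonical maps $\tau$ plus the continuity of the scalar action of $p$ on $N$; the target is complete, so by the universal property there is a unique homomorphism of topological $\mathcal{A}$-modules $\varphi_p\colon M\widehat{\otimes}_{\mathcal{A}}N\rightarrow M\widehat{\otimes}_{\mathcal{A}}(N\widehat{\otimes}_{\mathcal{B}}P)$ extending $\beta_p$. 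One then checks that the assignment $\gamma\colon(z,p)\mapsto \varphi_p(z)$ defines a continuous $\mathcal{B}$-bilinear map from $(M\widehat{\otimes}_{\mathcal{A}}N)\times P$ into $M\widehat{\otimes}_{\mathcal{A}}(N\widehat{\otimes}_{\mathcal{B}}P)$; bilinearity is routine on elementary tensors and then extends by continuity using that $z\mapsto\varphi_p(z)$ is continuous and that the image of $\tau$ is dense. Applying the universal property of the completed tensor product over $\mathcal{B}$ to $\gamma$ yields the desired morphism
\[
\Phi\colon(M\widehat{\otimes}_{\mathcal{A}}N)\widehat{\otimes}_{\mathcal{B}}P\rightarrow M\widehat{\otimes}_{\mathcal{A}}(N\widehat{\otimes}_{\mathcal{B}}P).
\]

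Symmetrically, by fixing $m\in M$ and starting from the continuous $\mathcal{B}$-bilinear map $(n,p)\mapsto\tau(\tau(m,n),p)$, one produces a continuous homomorphism $\psi_m\colon N\widehat{\otimes}_{\mathcal{B}}P\rightarrow(M\widehat{\otimes}_{\mathcal{A}}N)\widehat{\otimes}_{\mathcal{B}}P$; the map $(m,w)\mapsto\psi_m(w)$ is continuous and $\mathcal{A}$-bilinear, producing a continuous homomorphism $\Psi\colon M\widehat{\otimes}_{\mathcal{A}}(N\widehat{\otimes}_{\mathcal{B}}P)\rightarrow(M\widehat{\otimes}_{\mathcal{A}}N)\widehat{\otimes}_{\mathcal{B}}P$. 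To conclude, I would observe that the compositions $\Phi\circ\Psi$ and $\Psi\circ\Phi$ are continuous endomorphisms which, by construction, fix every elementary tensor of the form $m\otimes(n\otimes p)$, respectively $(m\otimes n)\otimes p$. Since such tensors span a dense subgroup and both completed tensor products are separated, these endomorphisms coincide with the identity, so $\Phi$ and $\Psi$ are mutually inverse isomorphisms of topological $\mathcal{B}$-modules.

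The main obstacle is the continuity verification for the auxiliary bilinear maps $\gamma$ and its analogue. Concretely, given an open $\mathcal{A}$-submodule $W$ of the target, one must produce open submodules $U\subset M\widehat{\otimes}_{\mathcal{A}}N$ and $V\subset P$ such that the image of $U\times P+(M\widehat{\otimes}_{\mathcal{A}}N)\times V$ lies in $W$, and this requires a double unwinding of the fundamental systems of neighborhoods defining the topology on the completed tensor products, as described after the definition in Section \ref{completed:tensor}. Once this is handled, the remainder of the argument is bookkeeping via the universal properties and density.
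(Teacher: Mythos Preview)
Your proposal is correct and is precisely the ``direct adaptation'' of Bourbaki's algebraic associativity argument that the paper invokes in lieu of a written-out proof. The construction of $\Phi$ and $\Psi$ via iterated use of the universal property, followed by the density-plus-separatedness argument to identify the compositions with the identity, is exactly the intended route; the paper simply omits the details you have supplied.
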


In the case where $M=\mathcal{B}_{1}$ and $N=\mathcal{B}_{2}$ are
topological $\mathcal{A}$-algebras, the completed tensor product
$\mathcal{B}_{1}\widehat{\otimes}_{\mathcal{A}}\mathcal{B}_{2}$ is a
complete topological $\mathcal{A}$-algebra and the composition  $\sigma_{1}\colon \mathcal{B}_{1}\rightarrow\mathcal{B}_{1}\widehat{\otimes}_{\mathcal{A}}\mathcal{B}_{2}$
(resp. $\sigma_{2}\colon \mathcal{B}_{2}\rightarrow\mathcal{B}_{1}\widehat{\otimes}_{\mathcal{A}}\mathcal{B}_{2}$)  of  $\mathrm{id}_{\mathcal{B}_{1}}\otimes1\colon \mathcal{B}_{1}\rightarrow\mathcal{B}_{1}\otimes_{\mathcal{A}}\mathcal{B}_{2}$ (resp. $1\otimes\mathrm{id}_{\mathcal{B}_{2}}\colon \mathcal{B}_{2}\rightarrow\mathcal{B}_{1}\otimes_{\mathcal{A}}\mathcal{B}_{2}$) with the separated completion homomorphism $\mathcal{B}_{1}\otimes_{\mathcal{A}}\mathcal{B}_{2}\rightarrow\mathcal{B}_{1}\widehat{\otimes}_{\mathcal{A}}\mathcal{B}_{2}$ is a homomorphism of topological $\mathcal{A}$-algebras. The $\mathcal{A}$-algebra $\mathcal{B}_{1}\widehat{\otimes}_{\mathcal{A}}\mathcal{B}_{2}$ satisfies the following universal property: \emph{For every complete topological $\mathcal{A}$-algebra $\mathcal{C}$
and every pair of homomorphisms of topological $\mathcal{A}$-algebras $f_{i}\colon \mathcal{B}_{i}\rightarrow\mathcal{C}$ there exists a unique homomorphism of topological $\mathcal{A}$-algebras $f\colon \mathcal{B}_{1}\widehat{\otimes}_{\mathcal{A}}\mathcal{B}_{2}\rightarrow\mathcal{C}$ such that $f_{i}=f\circ\sigma_{i}$, $i=1,2$}.

\subsubsection{Separated completed localization}

In what follows by a \emph{multiplicatively closed subset} of a ring $A$, we  mean a subset $S$ of $A$ containing $1$ and stable under multiplication. We now recall basic results on separated completed localizations of topological rings and modules, see \cite[0.7.6]{GD}.  

\begin{prop}\label{def:separated-local}\label{prop:univ-prop-sepcomploc}
For every multiplicatively closed subset $S$ of a topological ring $\mathcal{A}$, the functor $$\mathrm{Hom}_{S^{-1}}(\mathcal{A},-):(\mathrm{CRTop})\to (\mathrm{Sets})$$ 
which associates to every complete topological ring $\mathcal{B}$ the set of homomorphisms of topological rings $\varphi:\mathcal{A}\to \mathcal{B}$ such $\varphi(S)\subset \mathcal{B}^*$ is representable.
\end{prop}
\begin{proof} It is straightforward to verify that $\mathrm{Hom}_{S^{-1}}(\mathcal{A},-)$ is representable by the pair consisting  of the separated completion $\widehat{S^{-1}\mathcal{A}}$ of the usual localization $S^{-1}\mathcal{A}$ endowed with the topology co-induced by the localization homomorphism $j\colon \mathcal{A}\rightarrow S^{-1}\mathcal{A}$ and the homomorphism of topological rings $$\widetilde{j}=c\circ j\colon\mathcal{A}\stackrel{j}{\rightarrow} S^{-1}\mathcal{A} \stackrel{c}{\rightarrow} \widehat{S^{-1}\mathcal{A}}$$
where $c$ is the separated completion homomorphism.
\end{proof}

The complete topological ring $\widehat{S^{-1}\mathcal{A}}$ representing the functor $\mathrm{Hom}_{S^{-1}}(\mathcal{A},-)$ is called the \emph{separated completed localization} of $\mathcal{A}$ with the respect to the multiplicatively closed subset $S$. 

\begin{lem}
\label{lem:Basic-Prop-CompLoc}\label{cor:criterion-zero-loc} Let $\mathcal{A}$ be a topological ring with separated completion $c\colon\mathcal{A}\rightarrow\widehat{\mathcal{A}}$, let $S\subset\mathcal{A}$ be a multiplicatively closed subset and let $\widehat{S}\subset\widehat{\mathcal{A}}$ be the closure of $c(S)$ in $\widehat{\mathcal{A}}$. Then there exists a canonical isomorphism  $\widehat{S^{-1}\mathcal{A}}\cong \widehat{\widehat{S}^{-1}\widehat{\mathcal{A}}}$ of complete topological rings. 
In particular, $\widehat{S^{-1}\mathcal{A}}$
is the zero ring if and only if  $0_{\widehat{\mathcal{A}}}$ belongs to the closure $\widehat{S}$
of $c(S)$ in $\widehat{\mathcal{A}}$.

\end{lem}

\begin{proof}
Let $(\mathfrak{a}_n)_{n\in \mathbb{N}}$ be a fundamental system of open ideal in $\mathcal{A}$, let $\mathrm{p}_n\colon\mathcal{A}\rightarrow A_n=\mathcal{A}/\mathfrak{a}_n$, $n\in \mathbb{N}$, be the quotient homomorphisms and let $S_n=\mathrm{p}_n(S)\subset A_n$.
Note that for each $n$ there is a canonical isomorphism
$
S_n^{-1}A_n \;\cong\; (S^{-1}\mathcal A)/(S^{-1}\mathfrak a_n),
$
so that the rings $S_n^{-1}A_n$ are precisely the quotients of $S^{-1}\mathcal A$ by a fundamental
system of open ideals.
Then $\widehat{S}=\varprojlim_{n\in \mathbb{N}} S_n\subset \varprojlim_{n\in \mathbb{N}} A_n=\widehat{\mathcal{A}}$ so that, by definition,  $$\widehat{{S}^{-1}\mathcal{A}}\cong \varprojlim_{n\in \mathbb{N}}S_n^{-1} A_n\cong \widehat{\widehat{S}^{-1}\widehat{\mathcal{A}}}.$$
For the second assertion, we can reduce without loss of generality to the case where $\mathcal{A}$ is complete and $S$ is closed in $\mathcal{A}$. Now if $0_{\mathcal{A}}\in S$ then $S^{-1}\mathcal{A}$ is the zero ring, and so $\widehat{S^{-1}\mathcal{A}}$ is the zero ring as well. Conversely, if $\widehat{S^{-1}\mathcal{A}}=\varprojlim_{n\in \mathbb{N}} S_n^{-1}A_n$ is the zero ring, then $S_n^{-1}A_n$ is the zero ring for every $n\in \mathbb{N}$, which implies that $0_{\mathcal{A}}\in S_n$ for every $n\in \mathbb{N}$. It follows that $0$ belongs  $\varprojlim_{n\in \mathbb{N}} S_n=S$ as $S$ is closed. 
\end{proof}

\begin{lem} \label{lem:inject-loc}
\label{lem:Injectivity-Loc}Let $i\colon\mathcal{A}\rightarrow\mathcal{B}$ be an injective \emph{closed} homomorphism of complete topological rings
and let $S$ be a multiplicatively closed subset of $\mathcal{A}$. Then the naturally induced map $\widehat{S^{-1}i}\colon \widehat{S^{-1}A}\rightarrow\widehat{i(S)^{-1}\mathcal{B}}$ is an injective homomorphism of topological rings.
\end{lem}

\begin{proof}
Since $\mathcal{A}$ (resp. $\mathcal{B}$) is complete, the kernel of the separated completed localization homomorphism $\mathcal{A}\rightarrow \widehat{S^{-1}\mathcal{A}}$ (resp. $\mathcal{B}\rightarrow \widehat{i(S)^{-1}\mathcal{B}}$) consists of elements of $\mathcal{A}$ (resp. $\mathcal{B}$) which are anihilated by the multiplication by an element of the closure of $S$ in $\mathcal{A}$ (resp. of the closure of $i(S)$ in $\mathcal{B}$). On the other hand, since $i$ is a closed homomorphism of topological rings, $i(\mathcal{A})$ is complete subspace of $\mathcal{B}$, hence a closed subspace, so that the closures of $i(S)$ in $i(\mathcal{A})$ and $\mathcal{B}$ coincide. 
\end{proof}

\begin{example} 
Let $\mathcal{B}=\mathbb{C}[u]$
endowed with the $u$-adic topology, with fundamental system of neighbourhoods
of $0_{\mathcal{B}}$ given by the ideals $\mathfrak{b}_{n}=u^{n}\mathbb{C}[u]$,
$n\geq0$, and let $S=\{u^{m}\}_{m\geq0}$. We have then $\widehat{\mathcal{B}}\cong\mathbb{C}[[u]]$
endowed with the $u$-adic topology and $\widehat{S}=\{u^{m}\}_{m\geq0}\cup\{0\}$. It follows that $\widehat{S}^{-1}\widehat{\mathcal{B}}$ whence $\widehat{\widehat{S}^{-1}\widehat{\mathcal{B}}}$ is the zero ring. 
On the other hand, we have $S^{-1}\mathcal{B}=\mathbb{C}[u^{\pm1}]$
and the images of the ideal $\mathfrak{b}_{n}$ 
by the localization homomorphism
are all equal to the unit ideals in $\mathbb{C}[u^{\pm1}]$
The induced topology on $S^{-1}\mathcal{B}$ 
is thus the trivial ones, which implies that the separated completion $\widehat{S^{-1}\mathcal{B}}$ is the zero ring.

 Note that the conclusion of Lemma  \ref{lem:inject-loc} does not hold if  $i$ is not a closed homomorphism: the inclusion $i:\mathcal{A}=\mathbb{C}[u]\rightarrow \widehat{\mathcal{B}}=\mathbb{C}[[u]]$, where $\mathbb{C}[u]$ and $\mathbb{C}[[u]]$ are endowed respectively with the discrete topology and the $u$-adic topology,  is continuous but not closed, and for $S=\{u^{m}\}_{m\geq0}$, the separated completed localizations $\widehat{ S^{-1}\mathcal{A}}$ and $\widehat{\widehat{S}^{-1}\widehat{\mathcal{B}}}$ are respectively isomorphic to $\mathbb{C}[u,u^{-1}]$ endowed with the discrete topology and to the zero ring, so that $\widehat{S^{-1}i}$ is not injective in this case.  
\end{example}

%
%

\subsection{Restricted power series}\label{ref:Restricted power series}

\begin{defn}[{\cite[III.4.2]{BourbakiCA}} and {\cite[0.7.5]{GD}}] \label{def:res-ps}
Let $\mathcal{A}$ be a topological ring with separated completion  $c\colon \mathcal{A}\rightarrow\widehat{\mathcal{A}}$.  The ring of \emph{restricted power series} with coefficients in $\widehat{\mathcal{A}}$ and indeterminates $T_{1},\ldots,T_{r}$ is the separated completion $\widehat{\mathcal{A}}\{T_{1},\ldots,T_{r}\}$ of the polynomial ring $\mathcal{A}[T_1,\ldots, T_r]$ endowed with the topology generated by the ideals $\mathfrak{a}[T_1,\ldots , T_r]$, where $\mathfrak{a}$ runs throught the set of open ideals of $\mathcal{A}$.

The elements in the image of the natural homomorphism of topological rings $i_0\colon\widehat{\mathcal{A}}\rightarrow \widehat{\mathcal{A}}\{T_1, \ldots, T_r\}$ deduced from the inclusion $\mathcal{A}\hookrightarrow \mathcal{A}[T_1,\ldots , T_r]$ as the subring of constant polynomials are called \emph{constant restricted power series}.  
\end{defn}

Letting $(\mathfrak{a}_{n})_{n\in\mathbb{N}}$ be a fundamental system of open ideals of $\mathcal{A}$, it follows from the definition that 
\[
\widehat{\mathcal{A}}\{T_{1},\ldots,T_{r}\}\cong\varprojlim_{n\in\mathbb{N}}(\mathcal{A}/\mathfrak{a}_{n})[T_{1},\ldots,T_{r}]\cong \mathcal{A}\widehat{\otimes}_{\mathbb{Z}}\mathbb{Z}[T_1,\ldots, T_n] \cong \widehat{\mathcal{A}}\widehat{\otimes}_{\mathbb{Z}}\mathbb{Z}[T_1,\ldots, T_n].
\]

Identifying a polynomial in $\widehat{\mathcal{A}}[T_1,\ldots, T_r]$ with the family $(a_{I})_{I\in\mathbb{N}^{r}}$ of its coefficients, the elements of $\widehat{\mathcal{A}}\{T_1, \ldots, T_r\}$ are represented in turn by families $(a_{I})_{I\in\mathbb{N}^{r}}$ of elements of $\widehat{\mathcal{A}}$ which converge to $0$. 
This allows to further identify  $\widehat{\mathcal{A}}\{T_1, \ldots, T_r\}$ with the $\widehat{\mathcal{A}}$-subalgebra of the algebra of formal power series $\widehat{\mathcal{A}}[[T_1,\ldots, T_n]]\cong \prod_{I\in \mathbb{N}^n}\widehat{\mathcal{A}} $ with coefficients in $\widehat{\mathcal{A}} $
consisting of formal power series 
\[
\sum_{I=(i_{1},\ldots,i_{r})\in\mathbb{N}^{r}}a_{I}T_{1}^{i_{1}}\cdots T_{r}^{i_{r}}
\]
such that the family $(a_{I})_{I\in\mathbb{N}^{r}}$ converges to
$0$, see \cite[III.4.2]{BourbakiCA}.

\begin{lem} \label{lem:cc-converge-to-restricted-map}
Let $\mathcal{A}$ be a topological ring, let $\mathcal{B}$ be a separated topological ring with separated completion $c\colon\mathcal{B}\rightarrow \widehat{\mathcal{B}}$, let $h_n\colon \mathcal{A}\rightarrow \mathcal{B}$, $n\geq 0$, be a sequence of homomorphisms of groups and let $$\sigma :\mathcal{A}\to \widehat{\mathcal{B}}[[T]], \;a\mapsto \sum_{n\geq 0} c(h_n(a))T^n$$
be the associated homomorphisms of $\mathcal{A}$-algebras. Then the following are equivalent:

(a) The homomorphism $\sigma$ factors through a homomorphism of topogical $\mathcal{A}$-algebras $s:\mathcal{A}\to \widehat{\mathcal{B}}\{T\}$,

(b) The family $(h_n)_{n\geq 0}$ is uniformly equicontinuous and pointwise convergent to the zero homomorphism. 
\end{lem}

\begin{proof}
Since $c$ is uniformly continuous, the family $(h_n)_{n\geq 0}$ is uniformly equicontinous and pointwise convergent to zero if and only if so does the family $(c\circ h_n)_{n\geq 0}$. So we can assume without loss of generality that $\mathcal{B}$ is complete. Under the identification above of $\mathcal{B}\{T\}$ as  sub-$\mathcal{A}$-algebra of $\mathcal{B}[[T]]$, the homomorphism $\sigma$ factorizes through a homomorphism $\mathcal{A}\to \mathcal{B}\{T\}$ if and only if the family $(h_n)_{n\geq 0}$ is pointwise convergent to the zero homomorphism. When this holds, it follows from the definition of the topology on $\mathcal{B}\{T\}\cong \mathcal{B}\widehat{\otimes}_{\mathbb{Z}} \mathbb{Z}[T]$ that 
the induced homomorphism $s:\mathcal{A}\to \mathcal{B}\{T\}$
is continuous if and only if for every open ideal $\mathfrak{b}$ of $\mathcal{B}$, there exists an open ideal $\mathfrak{a}$ of $\mathcal{A}$ such that $h_n(\mathfrak{a})\subset \mathfrak{b}$, that is, if and only if the family $(h_n)_{n\geq 0}$ is uniformly equicontinuous. 
\end{proof}

We now collect additional basic results on restricted power series rings.

\begin{prop}[{\cite[III.4.2 Proposition 4]{BourbakiCA}}]\label{prop:univ-prop-restricted} The ring $\widehat{\mathcal{A}}\{T_{1},\ldots,T_{r}\}$
satisfies the following universal property: for every continuous
ring homomorphism $f\colon \mathcal{A}\rightarrow\mathcal{B}$ to a complete
topological ring $\mathcal{B}$ and every choice of $r$ elements
$b_{1},\ldots,b_{r}$ of $\mathcal{B}$, there exists a unique continuous ring 
homomorphism $\overline{f}\colon \widehat{\mathcal{A}}\{T_{1},\ldots,T_{r}\}\rightarrow\mathcal{B}$
such that $\overline{f}|_{c(\mathcal{A})}=\widehat{f}$ and such that $\overline{f}(T_{i})=b_{i}$
for every $i=1,\ldots,r$.
\end{prop}

\begin{lem}\label{lem:projlim-restricted}
For every complete topological ring $\mathcal{A}$ and every set of
variables $T_{1},\ldots,T_{s},T_{s+1},\ldots,T_{r}$, there exist
canonical isomorphisms of complete topological $\mathcal{A}$-algebras
\[
\mathcal{A}\{T_{1},\ldots,T_{s},T_{s+1},\ldots T_{r}\}\cong\mathcal{A}\{T_{1},\ldots,T_{r}\}\widehat{\otimes}_{\mathcal{A}}\mathcal{A}\{T_{s+1},\ldots,T_{r}\}\cong\mathcal{A}\{T_{1},\ldots,T_{s}\}\{T_{s+1},\ldots T_{r}\}.
\]
\end{lem}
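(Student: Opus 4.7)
The plan is to reduce both isomorphisms to a combination of two ingredients already at hand: the canonical identification $\mathcal{A}\{T_1,\ldots,T_r\}\cong \mathcal{A}\widehat{\otimes}_{\mathbb{Z}}\mathbb{Z}[T_1,\ldots,T_r]$ stated in the paragraph immediately preceding the lemma, and the classical polynomial ring decomposition $\mathbb{Z}[T_1,\ldots,T_r]\cong \mathbb{Z}[T_1,\ldots,T_s]\otimes_{\mathbb{Z}}\mathbb{Z}[T_{s+1},\ldots,T_r]$, which being an isomorphism of discretely topologized rings automatically identifies with its completed analogue. The associativity statement of Lemma~\ref{lem:Complete-Tensor-Associative} then does all of the work.

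For the first isomorphism, I would start from
$$\mathcal{A}\{T_1,\ldots,T_r\}\cong \mathcal{A}\widehat{\otimes}_{\mathbb{Z}}\bigl(\mathbb{Z}[T_1,\ldots,T_s]\otimes_{\mathbb{Z}}\mathbb{Z}[T_{s+1},\ldots,T_r]\bigr)$$
and apply Lemma~\ref{lem:Complete-Tensor-Associative} with base ring $\mathbb{Z}$, intermediate algebra $\mathcal{A}$, and the modules $M=\mathbb{Z}[T_1,\ldots,T_s]$ (discrete), $N=\mathcal{A}$, $P=\mathcal{A}\{T_{s+1},\ldots,T_r\}$, after first rewriting $\mathbb{Z}[T_{s+1},\ldots,T_r]$ as $\mathcal{A}\widehat{\otimes}_{\mathcal{A}}\mathbb{Z}[T_{s+1},\ldots,T_r]$ inside the expression. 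Reassociating yields
$$\bigl(\mathbb{Z}[T_1,\ldots,T_s]\widehat{\otimes}_{\mathbb{Z}}\mathcal{A}\bigr)\widehat{\otimes}_{\mathcal{A}}\bigl(\mathcal{A}\widehat{\otimes}_{\mathbb{Z}}\mathbb{Z}[T_{s+1},\ldots,T_r]\bigr)\cong \mathcal{A}\{T_1,\ldots,T_s\}\widehat{\otimes}_{\mathcal{A}}\mathcal{A}\{T_{s+1},\ldots,T_r\},$$
where both outer factors are identified via the displayed isomorphism. For the second isomorphism, I would apply the same displayed isomorphism but with the complete topological ring $\mathcal{A}\{T_1,\ldots,T_s\}$ in place of $\mathcal{A}$, obtaining
$$\mathcal{A}\{T_1,\ldots,T_s\}\{T_{s+1},\ldots,T_r\}\cong \mathcal{A}\{T_1,\ldots,T_s\}\widehat{\otimes}_{\mathbb{Z}}\mathbb{Z}[T_{s+1},\ldots,T_r],$$
and then one more application of Lemma~\ref{lem:Complete-Tensor-Associative} (inserting a trivial $\widehat{\otimes}_{\mathcal{A}}\mathcal{A}$) reorganizes the right-hand side into $\mathcal{A}\{T_1,\ldots,T_s\}\widehat{\otimes}_{\mathcal{A}}\mathcal{A}\{T_{s+1},\ldots,T_r\}$, matching the middle term of the lemma.

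All maps produced this way are homomorphisms of topological $\mathcal{A}$-algebras by construction, since each intermediate isomorphism respects $\mathcal{A}$-algebra structures. As a sanity check one may also invoke Corollary~\ref{cor:univ-prop-restricted}: each of the three complete topological $\mathcal{A}$-algebras in the statement represents the same functor $\mathcal{B}\mapsto \mathcal{B}^{\oplus r}$ on the category of complete topological $\mathcal{A}$-algebras, so the isomorphisms exist and are canonical by Yoneda. I do not expect any serious obstacle; the only bookkeeping issue is to apply associativity with the two different base rings $\mathbb{Z}$ and $\mathcal{A}$ in the right order, which is exactly the setup covered by Lemma~\ref{lem:Complete-Tensor-Associative}.
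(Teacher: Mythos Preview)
Your proposal is correct and follows essentially the same approach as the paper: the paper does not give a detailed proof at all, merely recording the identification $\mathcal{A}\{T_1,\ldots,T_r\}\cong \mathcal{A}\widehat{\otimes}_{\mathbb{Z}}\mathbb{Z}[T_1,\ldots,T_r]$ and declaring the lemma a straightforward consequence of Lemma~\ref{lem:Complete-Tensor-Associative}. You have simply spelled out the bookkeeping the paper leaves implicit, and your additional Yoneda sanity check via Corollary~\ref{cor:univ-prop-restricted} together with the coproduct property of the completed tensor product is a clean independent confirmation.
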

\begin{proof} Straightforward from Lemma \ref{lem:Complete-Tensor-Associative}.
\end{proof}
\begin{lem}\label{lem:proj-lim-restricted} Let $\mathcal{B}$ be the limit of a countable inverse system $(\mathcal{B}_n)_{n\in \mathbb{N}}$ of complete topological rings with surjective continuous transition homomorphisms $p_{m,n}\colon\mathcal{B}_m\rightarrow \mathcal{B}_n$ for every $m\geq n\geq 0$ and let $T_{1},\ldots,T_{r}$ be indeterminates. Then the canonical homomorphism of complete topological rings 
$$\mathcal{B}\{T_1,\ldots, T_r\}\cong (\varprojlim_{n\in \mathbb{N}}\mathcal{B}_n)\widehat{\otimes}_{\mathbb{Z}}\mathbb{Z}[T_1,\ldots , T_r] \rightarrow \varprojlim_{n\in \mathbb{N}} (\mathcal{B}_n \widehat{\otimes}_{\mathbb{Z}} \mathbb{Z}[T_1,\ldots T_r])\cong \varprojlim_{n\in \mathbb{N}} (\mathcal{B}_n\{T_1,\ldots , T_r\})$$
is an isomorphism.
\end{lem}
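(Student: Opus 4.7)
The plan is to prove this by using the explicit coefficient description of restricted power series on both sides, then separately verifying bijectivity and the topological compatibility of the canonical homomorphism.

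First I would construct the canonical map $\Psi$ concretely. For each $n$, the continuous homomorphism $\widehat{p}_n\colon\mathcal{B}\to\mathcal{B}_n$, together with the assignments $T_i\mapsto T_i$, yields by Proposition \ref{prop:univ-prop-restricted} a homomorphism of topological rings $\Psi_n\colon\mathcal{B}\{T_1,\ldots,T_r\}\to\mathcal{B}_n\{T_1,\ldots,T_r\}$ which on coefficients reads $\sum_I a_I T^I\mapsto\sum_I\widehat{p}_n(a_I)T^I$. These $\Psi_n$ are compatible with the transition homomorphisms $p_{m,n}$, so their universal collection defines the canonical continuous homomorphism $\Psi\colon\mathcal{B}\{T_1,\ldots,T_r\}\to\varprojlim_{n}\mathcal{B}_n\{T_1,\ldots,T_r\}$ of the statement.

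Next I would establish bijectivity. Injectivity: if $\Psi(\sum_I a_I T^I)=0$, then $\widehat{p}_n(a_I)=0$ for every $n$ and every multi-index $I$; since $\mathcal{B}=\varprojlim_n\mathcal{B}_n$ is separated by Lemma \ref{lem:inverselim-top-grp}, this forces $a_I=0$ for every $I$. Surjectivity: given a compatible family $(f_n)_{n\in\mathbb{N}}$ with $f_n=\sum_I a_I^{(n)}T^I$, compatibility imposes $p_{m,n}(a_I^{(m)})=a_I^{(n)}$ for every $I$ and every $m\geq n$, so for each $I$ the collection $(a_I^{(n)})_n$ defines an element $a_I\in\mathcal{B}$. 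To conclude that $\sum_I a_I T^I$ lies in $\mathcal{B}\{T_1,\ldots,T_r\}$, one must check that $(a_I)_I$ converges to zero in $\mathcal{B}$. By Lemma \ref{lem:inverselim-top-grp} a fundamental system of open ideals of $\mathcal{B}$ is formed by the preimages $\widehat{p}_n^{-1}(\mathfrak{b})$, with $n\in\mathbb{N}$ and $\mathfrak{b}$ open in $\mathcal{B}_n$; and since $f_n$ is a restricted power series, the coefficients $a_I^{(n)}=\widehat{p}_n(a_I)$ lie in $\mathfrak{b}$ for all but finitely many $I$. So $(a_I)_I$ converges to zero, and $\Psi(\sum_I a_I T^I)=(f_n)_{n\in\mathbb{N}}$.

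Finally I would verify that $\Psi$ is a homeomorphism by comparing fundamental systems of open ideals. A fundamental system of open ideals of $\mathcal{B}\{T_1,\ldots,T_r\}$ is obtained, as recalled in Section \ref{ref:Restricted power series}, from a fundamental system of open ideals $\mathfrak{a}$ of $\mathcal{B}$ by passing to the ideals $\mathfrak{a}\{T_1,\ldots,T_r\}$ of restricted power series with all coefficients in $\mathfrak{a}$, so using the fundamental system $\mathfrak{a}=\widehat{p}_n^{-1}(\mathfrak{b})$ from the previous step. Under $\Psi$, the ideal $\widehat{p}_n^{-1}(\mathfrak{b})\{T_1,\ldots,T_r\}$ is exactly the preimage under the projection $\varprojlim_m\mathcal{B}_m\{T_1,\ldots,T_r\}\to\mathcal{B}_n\{T_1,\ldots,T_r\}$ of the open ideal $\mathfrak{b}\{T_1,\ldots,T_r\}\subset\mathcal{B}_n\{T_1,\ldots,T_r\}$; these preimages form a fundamental system of open ideals of the right-hand side by Lemma \ref{lem:inverselim-top-grp} applied to the inverse system $(\mathcal{B}_n\{T_1,\ldots,T_r\})_{n\in\mathbb{N}}$. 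So $\Psi$ induces a bijection between two fundamental systems of open ideals, hence is a topological isomorphism.

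I do not expect a serious obstacle here: the content is really just the interplay between convergence of families in the inverse limit topology and the restricted power series condition. The only step requiring genuine care is the verification in the surjectivity argument that the assembled family $(a_I)_I$ converges to zero in $\mathcal{B}$, which relies on the correct identification of a fundamental system of open neighbourhoods of zero in $\mathcal{B}=\varprojlim_n\mathcal{B}_n$.
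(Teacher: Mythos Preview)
Your proposal is correct and follows essentially the same approach as the paper: both arguments rest on the coefficient description of restricted power series and the observation that a family $(a_I)_{I\in\mathbb{N}^r}$ converges to $0$ in $\mathcal{B}=\varprojlim_n\mathcal{B}_n$ if and only if its image in each $\mathcal{B}_n$ converges to $0$. The paper's proof is considerably terser, stating only the bijection of coefficient families and leaving the topological compatibility implicit; your version spells out the construction of $\Psi$ via the universal property, the injectivity and surjectivity arguments, and the matching of fundamental systems of open ideals, which makes the homeomorphism claim explicit rather than tacit.
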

\begin{proof} Let $p_n\colon\mathcal{B}\rightarrow \mathcal{B}_n$, $n\in \mathbb{N}$ be the canonical projection homomorphisms.  By definition, elements of $\mathcal{B}\{T_1,\ldots ,T_r\}$ are represented by families $(b_{I})_{I\in\mathbb{N}^{r}}$ of elements of $\mathcal{B}$ which converge to $0$ in $\mathcal{B}$. Since the projection homomorphisms $p_n$ are surjective, it follows from the definition of the topology on $\mathcal{B}$ that these families are in one-to-one correspondence with collections of families $(b_{n,I})_{I\in\mathbb{N}^{r}}$ of elements of $\mathcal{B}_n$, $n\in \mathbb{N}$, such that $(b_{n,I})_{I\in\mathbb{N}^{r}}$ converges to $0$ in $\mathcal{B}_n$ for every $n\in \mathbb{N}$ and such that $b_{n,I}=p_{m,n}(b_{m,I})$ for every $m\geq n\geq 0$ and every $I\in \mathbb{N}^r$. 
\end{proof}

\begin{lem}
\label{lem:Spe-Comp-Loc-Restricted-degree-0}Let $\mathcal{A}$ be
topological ring with separated completion $c\colon\mathcal{A}\rightarrow\widehat{\mathcal{A}}$
and let $S\subset\mathcal{A}$ be a multiplicatively closed subset. Let $T_{1},\ldots,T_{r}$ be a set of indeterminates and
let $\widehat{S}_{0}\subset\mathcal{\widehat{\mathcal{A}}}\{T_{1},\ldots,T_{r}\}$
be the image of $S$ by the composition of $c$ with the inclusion
$i_{0}\colon\widehat{\mathcal{A}}\hookrightarrow\mathcal{\widehat{\mathcal{A}}}\{T_{1},\ldots,T_{r}\}$. Then there exists
a canonical isomorphism of complete topological $\widehat{S^{-1}\mathcal{A}}$-algebras
\[
\widehat{S^{-1}\mathcal{A}}\{T_{1},\ldots,T_{r}\}\cong\widehat{(\widehat{S}_{0}^{-1}(\mathcal{\widehat{A}}\{T_{1},\ldots,T_{r}\}))},
\]
where $\widehat{(\widehat{S}_{0}^{-1}(\mathcal{\widehat{A}}\{T_{1},\ldots,T_{r}\}))}$
is viewed as an $\widehat{S^{-1}\mathcal{A}}$-algebra via the unique homomorphism of topological rings deduced from the homomorphism of topological rings 
\[
\mathcal{A}\stackrel{i_{0}\circ c}{\rightarrow}\mathcal{\widehat{A}}\{T_{1},\ldots,T_{r}\}\rightarrow\widehat{(\widehat{S}_{0}^{-1}(\mathcal{\widehat{A}}\{T_{1},\ldots,T_{r}\}))}
\]
by the universal property of separated completed localization.
\end{lem}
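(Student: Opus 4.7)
My plan is to exhibit the claimed canonical isomorphism by constructing mutually inverse continuous ring homomorphisms, using the universal properties of restricted power series rings (Proposition~\ref{prop:univ-prop-restricted}) and of separated completed localization (Proposition~\ref{prop:univ-prop-sepcomploc}). For brevity, set $\mathcal{B}=\widehat{(\widehat{S}_{0}^{-1}(\widehat{\mathcal{A}}\{T_{1},\ldots,T_{r}\}))}$ and $\mathcal{C}=\widehat{S^{-1}\mathcal{A}}\{T_{1},\ldots,T_{r}\}$.

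To build a continuous ring homomorphism $\alpha\colon\mathcal{C}\to\mathcal{B}$, I would consider the composition
\[
\phi\colon \mathcal{A}\xrightarrow{i_{0}\circ c}\widehat{\mathcal{A}}\{T_{1},\ldots,T_{r}\}\longrightarrow\mathcal{B},
\]
where the second arrow is the separated completed localization homomorphism at $\widehat{S}_{0}$. By the very definition of $\widehat{S}_{0}$, the image $\phi(S)$ lies in $\mathcal{B}^{*}$, and Proposition~\ref{prop:univ-prop-sepcomploc} yields a unique continuous ring homomorphism $\widehat{S^{-1}\phi}\colon\widehat{S^{-1}\mathcal{A}}\to\mathcal{B}$ satisfying $\phi=\widehat{S^{-1}\phi}\circ\widetilde{j}$. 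Applying Proposition~\ref{prop:univ-prop-restricted} to $\widehat{S^{-1}\phi}$ together with the images of $T_{1},\ldots,T_{r}$ in $\mathcal{B}$ then produces $\alpha$, which is by construction the morphism of topological $\widehat{S^{-1}\mathcal{A}}$-algebras determined by $T_{i}\mapsto T_{i}$.

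Conversely, to build $\beta\colon\mathcal{B}\to\mathcal{C}$, I would apply Proposition~\ref{prop:univ-prop-restricted} to the composition $\mathcal{A}\xrightarrow{\widetilde{j}}\widehat{S^{-1}\mathcal{A}}\xrightarrow{i_{0}}\mathcal{C}$ together with the $r$-tuple $(T_{1},\ldots,T_{r})\in\mathcal{C}^{r}$, obtaining a continuous ring homomorphism $\Psi\colon\widehat{\mathcal{A}}\{T_{1},\ldots,T_{r}\}\to\mathcal{C}$. Since ring homomorphisms preserve units and $j(S)\subset(S^{-1}\mathcal{A})^{*}$, we have $\widetilde{j}(S)\subset\widehat{S^{-1}\mathcal{A}}^{*}\subset\mathcal{C}^{*}$, and hence $\Psi(\widehat{S}_{0})\subset\mathcal{C}^{*}$. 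Proposition~\ref{prop:univ-prop-sepcomploc} then produces the desired continuous ring homomorphism $\beta$.

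Finally, I would verify that $\alpha$ and $\beta$ are mutually inverse by checking that each composite $\alpha\circ\beta$ and $\beta\circ\alpha$ restricts to the identity on the image of the polynomial ring $\mathcal{A}[T_{1},\ldots,T_{r}]$ in the respective source; the density of this image together with the separatedness of both complete topological rings forces the composites to equal the identity. The main obstacle is mostly bookkeeping: at each step one must check that the relevant universal property applies, and one must verify that the $\widehat{S^{-1}\mathcal{A}}$-algebra structure on $\mathcal{B}$ prescribed in the statement genuinely coincides with the one arising from the factorization of $\phi$ used in the construction of $\alpha$.
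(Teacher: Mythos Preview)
Your approach via mutually inverse maps built from universal properties is correct, with one small caveat in the final step: the image of $\mathcal{A}[T_{1},\ldots,T_{r}]$ is not literally dense in $\mathcal{B}$ or $\mathcal{C}$ in general, since elements such as $s^{-1}$ for $s\in S$ need not lie in the closure of that image. The fix is immediate: because $\alpha$ and $\beta$ are ring homomorphisms, agreement on the image of $\mathcal{A}[T_{1},\ldots,T_{r}]$ forces agreement on the inverses of elements of $S$, hence on the image of $S^{-1}\mathcal{A}[T_{1},\ldots,T_{r}]$, which \emph{is} dense in both targets. Alternatively, one can bypass density entirely by invoking the uniqueness clauses of Propositions~\ref{prop:univ-prop-sepcomploc} and~\ref{prop:univ-prop-restricted} directly: both $\beta\circ\alpha$ and $\mathrm{id}_{\mathcal{C}}$ are continuous $\widehat{S^{-1}\mathcal{A}}$-algebra homomorphisms sending each $T_{i}$ to $T_{i}$, hence coincide, and similarly for $\alpha\circ\beta$.

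The paper takes a shorter and more conceptual route: rather than constructing two maps and checking they are inverse, it simply observes that, unwinding the definitions, both $\mathcal{B}$ and $\mathcal{C}$ are separated completions of the \emph{same} topological ring $S^{-1}\mathcal{A}[T_{1},\ldots,T_{r}]$ with respect to the topology generated by the ideals $S^{-1}\mathfrak{a}[T_{1},\ldots,T_{r}]$ as $\mathfrak{a}$ ranges over the open ideals of $\mathcal{A}$. This makes canonicity transparent in one line. Your argument, while longer, has the virtue of making the isomorphism explicit as the unique continuous $\widehat{S^{-1}\mathcal{A}}$-algebra map with $T_{i}\mapsto T_{i}$, which is precisely the description recorded in Notation~\ref{nota:loc-restrictedserie} and used downstream in Proposition~\ref{prop:Localization-Kernel}.
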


\begin{proof}
By definition of the separated completed localization and the of the restricted power series rings, these topological rings are both isomorphic, as topological $\widehat{S^{-1}\mathcal{A}}$-algebras, to the separated completion of the ring $S^{-1}\mathcal{A}[T_1,\ldots ,T_r]=S^{-1}\mathcal{A}\otimes_{\mathbb{Z}} \mathbb{Z}[T_1,\ldots , T_r]$ with respect to the topology generated by the open ideals $S^{-1}\mathfrak{a}[T_1,\ldots ,T_r]$, where $\mathfrak{a}$ ranges through the set of open ideals of $\mathcal{A}$. 
\end{proof}

\end{document}